\providecommand{\BBb}[1]{{\mathbb{#1}}}
\providecommand{\cal}[1]{{\mathcal{#1}}}   
\newcommand{\Bcirc}{\overset{\lower 1.5pt%
              \hbox{$@,@,@,@,@,\scriptscriptstyle\circ$}}B{}}
\newcommand{\Binfty}{\overset{\lower 1.5pt%
              \hbox{$@,@,@,@,@,\scriptscriptstyle\infty$}}B{}}
\newcommand{\bigdot}{\mathbin{\raise.65\jot\hbox{$\scriptscriptstyle\bullet$}}}
\newcommand{\C}{{\BBb C}}
\newcommand{\Cb}{C_{\operatorname{b}}}
\newcommand{\Dm}{\BBb D}
\newcommand{\dual}[2]{\langle\,#1,\,#2\,\rangle}
\newcommand{\erd}{\overset{\lower 1pt\hbox{\large.}}{e}
                  \overset{\lower 1pt\hbox{\large.}}{r}}
\newcommand{\Fcirc}{\overset{\lower 1.5pt%
               \hbox{$@,@,@,@,@,\scriptscriptstyle\circ$}}F{}}
\newcommand{\fracc}[2]{{
                \textstyle\frac{#1}{\raise 1pt\hbox{$\scriptstyle #2$}}}}
\newcommand{\fracp}{\fracc1p}
\newcommand{\fracnp}{\fracc np}
\newcommand{\fracci}[2]{{\frac{#1}{\raise 1pt\hbox{$\scriptscriptstyle #2$}}}}
\newcommand{\fracpi}{\fracci1p}
\newcommand{\grad}{\operatorname{grad}}
\newcommand{\lap}{\operatorname{\Delta}}
\newcommand{\mlap}{-\!\operatorname{\Delta}}
\newcommand{\norm}[2]{\mathinner{\|}#1\,|#2\|}
\newcommand{\Norm}[2]{\mathinner{\bigl\|\,#1\,\big|#2\bigr\|}}
\newcommand{\op}[1]{\operatorname{#1}}
\newcommand{\N}{\BBb N}
\newcommand{\R}{{\BBb R}}
\newcommand{\Rn}{{\BBb R}^{n}}
\newcommand{\Rp}{\overline{{\BBb R}}_+}
\newcounter{enmcount}\renewcommand{\theenmcount}{{\rm\arabic{enmcount}}}
\newcounter{rmcount}\renewcommand{\thermcount}{{\rm\roman{rmcount}}}
\newcounter{Rmcount}\renewcommand{\theRmcount}{{\rm\Roman{Rmcount}}}
\newcommand{\Z}{\BBb Z}
\numberwithin{equation}{section}
\newtheorem{thm}{Theorem}
\numberwithin{thm}{section}
\newtheorem{prop}[thm]{Proposition}
\newtheorem{lem}[thm]{Lemma}
\theoremstyle{definition}
\newtheorem{exmp}[thm]{Example}
 \numberwithin{exercise}{section}
\theoremstyle{remark}
\newtheorem{rem}[thm]{Remark}
\begin{document}
\pagestyle{empty}\renewcommand{\thepage}{}

\pagestyle{plain}
\textwidth=36em\textheight=110ex
~\vspace{22ex}
\begin{center}{\bf\Large SEMI-LINEAR BOUNDARY PROBLEMS OF
\\[\jot] 
{\renewcommand{\!}{\kern-0.2truemm} \mathsurround=0.0pt
 C\!O\!\!MP\!O\!SITI\!O\!N~TYP\!E~IN~$\!\pmb{L}_{\pmb{p}}\!$-\!\!REL\!A\!\!TED~S\!P\!\!\!\!A\!\!\!CES
}}
\end{center}

\medskip
\begin{center}
{\sc Jon Johnsen\/}\footnote{partly supported by the Danish Natural Sciences
Research Council, grant no.~11--1221--1 and no.\ 11--9030} \end{center}
\begin{center}{Institute of Mathematical Sciences, Mathematics Department;\\
Universitets\-par\-ken~5, DK-2100 Copenhagen O; Denmark.\\
E-mail: {\tt jjohnsen@math.ku.dk\/}}
\end{center}

\smallskip
\begin{center}
{\sc Thomas Runst\/}\footnote{partly supported by
Deutsche Forschungsgemeinschaft, grant Tr~374/1-1.
\\[2\baselineskip]
{\tt Appeared in Communications in partial differential equations, {\bf 22\/} (1997), no.7--8, 1283--1324.}} 
\end{center}
\begin{center}{Mathematical Institute, Friedrich--Schiller--Universit{\"a}t
Jena;\\ Ernst--Abbe--Platz 1--4, D-07743 Jena; Germany.\\
E-mail: {\tt runst@minet.uni-jena.de \/}}
\end{center}
\baselineskip=1.5\baselineskip

\bigskip
\section{Introduction} \label{intr-sect}
We address the $L_p$-theory of semi-linear boundary problems of the form:
\begin{equation}
  \begin{aligned}
    Au(x)+g(u(x))&= f(x) &&\quad\text{in }&&\Omega,  \\
    Tu(x)&=0 &&\quad\text{on }&&\Gamma:=\partial\Omega.
  \end{aligned}
  \label{cmp-pb}
\end{equation}
Here $\{A,T\}$ defines a linear elliptic problem (specified below),
$g(t)\in \Cb^\infty(\R)$, 
and we seek solutions $u(x)$ with $s$ derivatives in $L_p(\Omega)$, roughly
speaking. 

The purpose is to study effects caused by the non-linearity
$g(u)$, when one wants a \emph{maximal} range of both $s$ and $p$. As a main
result we describe and determine in Theorem~\ref{dreg-thm} ff.\ below a
certain borderline occurring for $s\in\,]1,\fracnp[\,$. 
To our knowledge neither the borderline nor the range $\,]1,\fracnp[\,$ has
been treated before.

\textheight=123ex
Moreover, for each $n\ge6$ and \emph{fixed} $p$ in $[1,\tfrac{n}{3+\sqrt8}]$
the $H^s_p$-theory is split into two parts by the borderline (loosely speaking
$0<s\lesssim3$ and $s\gtrsim\fracnp$). In
particular this is so for the $H^s$-theory when $n\ge12$.

These phenomena actually occur in any dimension when $p$ is taken arbitrarily
in $\,]0,\infty]$. Thus it is advantageous for the \emph{full}
understanding of \eqref{cmp-pb} to use spaces with $p<1$, and this we
do in the framework 
of the Besov and Triebel--Lizorkin spaces, $B^{s}_{p,q}$ and $F^{s}_{p,q}$.

In this context we treat the existence and regularity of solutions, 
with Land\-es\-man--Lazer conditions for the self-adjoint case.

Our methods combine two general investigations in $B^{s}_{p,q}$ and
$F^{s}_{p,q}$ spaces:
(i) Boutet de Monvel's pseudo-differential calculus of linear boundary
problems, which gives the framework for $\{A,T\}$, with \cite{JJ96ell} by the
first author as source (extending works of Grubb and Franke \cite{G3,F2}); and
(ii) estimates of composition operators $u\mapsto g(u)$ in
works of Sickel and the second author \cite{Run86,RuSi96,Sic89}.

\bigskip

The borderline phenomena occur although we assume that $g(t)$ is
real-valued with bounded derivatives of any order, i.e.\ 
\begin{equation}
  g(t) \in \Cb^\infty(\R).
\end{equation} 
Such non-linearities constitute only a narrow class, but on one hand
new insight can be obtained even for these, and on the other hand our
methods do not allow us to go further since a full set of composition
estimates have not yet been established for wider classes.

As motivated above we treat solutions $u(x)$ in the  Besov 
and Triebel--Lizorkin spaces, $B^{s}_{p,q}$ and $F^s_{p,q}$, with $s\in\R$ and
$p$ and $q$ in $\,]0,\infty]$; throughout with $p<\infty$ for
$F^s_{p,q}$, however. Both $u(x)$ and $f(x)$ are assumed \emph{real-valued}.

Recall that e.g.\  H{\"o}lder--Zygmund spaces $C^s_*=B^{s}_{\infty,\infty}$
($s>0$), 
Sobolev--Slobodetski\u\i\ spaces $W^s_p=B^{s}_{p,p}$ ($s\in\R_+\!\setminus\N$,
$1<p<\infty$), Bessel potential spaces $H^s_p=F^{s}_{p,2}$ ($s\in\R$,
$1<p<\infty$) and local Hardy spaces $h_p=F^0_{p,2}$ ($0<p<\infty$),
cf.~\cite{T2,T3}, so that these are covered by our treatment. 

In \eqref{cmp-pb}, $\Omega\subset\Rn$ is a bounded open set with
$C^\infty$-smooth boundary~$\Gamma$ for $n\ge1$. 
$A=\sum_{|\alpha|\le2} a_\alpha(x)D^\alpha$ is an elliptic operator and
the trace operator $T=S_0\gamma_0+S_1\gamma_1$, where
$\gamma_0u=u|_\Gamma$ is restriction to the boundary while
$\gamma_1u=\gamma_0(\vec n\cdot\grad u)$ for 
a unit outward normal vectorfield, $\vec n$, near~$\Gamma$.
For simplicity $A$ is taken of order $2$ and the boundary condition is
homogeneous, so we only need to treat $A_T$, the $T$-\emph{realisation}
of $A$; for this reason $T$ is assumed to be right invertible 
(e.g.\ $T$ could be normal). Moreover, $A$ and $T$ have coefficients in
$C^\infty(\overline{\Omega})$, and the $S_j$ are differential operators in
$\Gamma$ of order $d-j$ for some $d<2$. The class of $T$ is denoted by $r$; by
definition $r=1$ here if $S_1\equiv0$, and else $r=2$.

Finally, $\{A,T\}$ is assumed elliptic in the Boutet de~Monvel calculus
\cite{BM71}; see \eqref{ellip1}--\eqref{ellip2} below.

\subsection*{Review}
Under the assumptions above we deduce three consequences for the non-linear
problem \eqref{cmp-pb}: 
\begin{itemize}
    \item[(i)] (Theorem~\ref{dreg-thm}.) For $(s,p,q)$ belonging to a 
domain $\Dm(A_T+g(\cdot))$, specified below, the condition $Tv=0$ makes sense
and $v\mapsto g(v)$ has order strictly less than $2$ when 
$v(x)$ in $B^{s}_{p,q}$ or $F^{s}_{p,q}$.

In particular $g(\cdot)$ is better behaved than $A_T$ on
$B^{s}_{p,q}$ and $F^{s}_{p,q}$ whenever $(s,p,q)\in
\Dm(A_T+g(\cdot))$. Because the range $1<s<\fracnp$ is included, the
transformation $(s,p,q)\mapsto (s,\fracnp,\fracc nq)$ will for $n\ge2$ take
$\Dm(A_T+g(\cdot))$ into a \emph{non-convex} subset of $\R^3$.

  \item[(ii)] (Theorem~\ref{ireg-thm}.) Given a solution $u(x)$ in
$B^{s_1}_{p_1,q_1}$ for data $f(x)$ in $B^{s_0-2}_{p_0,q_0}$, where
$(s_j,p_j,q_j)\in\Dm(A_T+g(\cdot))$ for both $j=0$ and $1$, then
$u(x)$ also belongs to $B^{s_0}_{p_0,q_0}$, as in the linear case, and
similarly in the $F$-case. 

Using that $A_T$ has a parametrix in the pseudo-differential calculus, this
follows from a bootstrap argument with \emph{varying} integral exponents; even
for $p_0=p_1$ the $p$'s cannot in general for $n\ge4$ be kept fixed because
$\Dm(A_T+g(\cdot))$ is not convex. 

  \item[(iii)] (Theorem~\ref{solv-thm}.) For $(s,p,q)$ in
$\Dm(A_T+g(\cdot))$ and $f(x)$ in $B^{s-2}_{p,q}$ there exists a solution
$u(x)$ in $B^{s}_{p,q}$, and similarly for the $F^{s}_{p,q}$ scale. This is
proved by means of the Leray--Schauder theorem when $A_T$ is invertible,
as well as when $A_T$ is self-adjoint and $f(x)$ satisfies generalised
Landesman--Lazer conditions, cf.~\cite{RoLa95}.

The proof is standard for $s<2$, for then the embedding, say,
$L_\infty\hookrightarrow B^{s-2}_{p,q}$ shows that
$\norm{g(u)}{B^{s-2}_{p,q}}$ is estimated \emph{independently} of $u$ by
$\norm{g}{L_\infty}$. For larger $s$ such a procedure seems impossible, but
we consider $f(x)$ as an element of some $X\supset L_\infty$ to which the
result for $s<2$ applies; the inverse regularity result in (ii)
yields that the found solution belongs to $B^{s}_{p,q}$ or $F^{s}_{p,q}$ as
required. 
\end{itemize}
Throughout the set $\Dm(A_T+g(\cdot))$ is termed the \emph{parameter domain}
of the operator $A_T+g(\cdot)$, cf.~ Figure~\ref{DM-fig}. 
In addition to (i) above, for $T$ of class $2$ we characterise the
largest possible parameter domain (except for the borderline cases, which are
undiscussed here).

\begin{exmp}[General data]   \label{data-ex}
When $\Omega$ is connected in $\Rn$ for $n\ge2$ and $0\in\Omega$,
we get the following:

(a) For $r=1$, take any $A_T+g(\cdot)$, say
$\mlap_{\gamma_0}u+(1+u^2)^{-1}$. With $x=(x',x_n)$, let $f$ be the 
restriction to $\Omega$ of one the distributions
\begin{equation}
 1(x')\otimes\op{pv}(\tfrac{1}{x_n}), \quad
  1(x')\otimes\delta_0(x_n);
  \label{ex1}
\end{equation}
then $f$ is in $B^{\fracpi-1}_{p,\infty}(\overline{\Omega})$ for
$p\in]1,\infty]$, cf.~Example~\ref{tnsr2-ex}. By Theorem~\ref{solv-thm} there is,
whenever $1<p\le\infty$, 
a solution $v_0(x)$ lying in $B^{\fracpi+1}_{p,\infty}(\overline{\Omega})$.

(b) $r=2$. When $A_T=\mlap_{\gamma_1}$ and $g(t)=\frac{\pi}{2}+\arctan t$,
then, 
\begin{equation}
  f(x)=\chi(x)|x|^\alpha \in B^{\fracci np+\alpha}_{p,\infty}
  \quad\text{for each}\quad p\in\,]0,\infty],
  \label{ex2}
\end{equation}
when $-n<\alpha<0$ and $\chi$ is a cut-off function with $\chi(0)=1$,
cf.~\cite{RuSi96}.  Here each $\alpha\ge-2$
yields $\fracnp+2+\alpha>\fracnp$ and hence $(\fracnp+2+\alpha,p,\infty)\in
\Dm(\mlap_{\gamma_1}+g(\cdot))$ if $p$ satisfies $\fracc{n-1}p+1+\alpha>0$, 
and for $\chi$ such that $\int_\Omega f< \pi$ there is then a solution
$v_1(x)$ in 
$B^{\fracci np+\alpha+2}_{p,\infty}(\overline{\Omega})$ according to
Theorem~\ref{solv-thm}. (Even $-n<\alpha<-2$ may be treated for $p$ in
a smaller interval.)

However, when $-2<\alpha\le-1$ the function $f$ in \eqref{ex2} is not in
$B^{t}_{\infty,\infty}$ for $t>-1$, so the existence of $v_1$ is not
provided by \cite{FrRu88,RoRu96}. 
\end{exmp}

\begin{exmp}[Optimal regularity]  \label{ireg-ex}
By Theorem~\ref{ireg-thm} each $v_0$ in (a) of Example~\ref{data-ex}
also belongs to $B^{\fracci1r+1}_{r,\infty}(\overline{\Omega})$ for every
$r\in\,]1,\infty]$.  

That $v_1$ exists in $H^2$ is
known for $-2<\alpha\le-1$ when $n>-2\alpha$, for $f$ is in $L_2$
in such dimensions. However, that $v_1$ is in
$B^{\fracci np+\alpha+2}_{p,\infty}$ is a stronger fact provided by
Example~\ref{data-ex}. For $n\ge6$ 
this even holds for the classical range $p\in [1,\frac{n}{3+\sqrt8}]$, so in
particular, for $\alpha=-2$ and $n=12$ we conclude that $v_1$ belongs to
$H^{6-\varepsilon}$ for $\varepsilon>0$. 

The typical difficulties caused by the boundary of the parameter domain
$\Dm(\mlap_{\gamma_1}+g(\cdot))$ are illustrated in Figure~\ref{DM-fig} below;
especially 
the dotted line indicates that one cannot just `go upwards' to obtain,
say, $v_1\in H^{6-\varepsilon}$. 
\end{exmp}

\subsection*{Other works} There are numerous articles on semi-linear
problems, so we shall only compare results for the one specified in
\eqref{cmp-pb} ff., and thus leave out the more liberal assumptions found on
e.g.~$g$ in many papers. 

Solutions for
$s=1$ or $2$ and $p=2$ have been treated by e.g.\ Landesman and Lazer
\cite{LaLa70}, Ambrosetti and Mancini \cite{AmMa78}, Br{\'e}zis and Nirenberg
\cite{BrNi78} and Robinson and Landesman \cite{RoLa95}, 
and for $p>1$ by Amann, Ambrosetti and Mancini \cite{AAMa78} and Ne\v cas
\cite{Nec83} 
whereas the $B^{s}_{p,q}$ and $F^{s}_{p,q}$ have been dealt with for
$s>\fracnp$ in works of Franke, Runst and Robinson \cite{FrRu88,RoRu96}.

Spaces with $1<s<\fracnp$ have not been treated systematically for
\eqref{cmp-pb} before, so the non-convexity and the borderline of
$\Dm(A_T+g(\cdot))$ in this region should be novelties, together with
its maximality when $T$ has class~$2$. 

The crucial inverse regularity properties of $A_T+g(\cdot)$ in (ii) above do
not as far as we know have any forerunners, not even under further assumptions
on the $(s,p,q)$'s or on $g(t)$. However, the simpler property that $u(x)$ is
in $C^\infty$ when $f(x)$ is so (hypoellipticity) was obtained in
\cite{AAMa78,AmMa78,BrNi78}.

In contrast to this the solvability of \eqref{cmp-pb} has been treated
extensively with some of the original applications of the Leray--Schauder
theorem containing the case $A_T=\lap_{\gamma_0}$ \cite{LeSc34}. In general,
when $A_T$ is invertible, it was assumed in \cite{FrRu88,RoRu96} that the data
$f$ given in $B^{s-2}_{p,q}$ or $F^{s-2}_{p,q}$ for $s>\fracnp$ should also
belong to $B^t_{\infty,\infty}$ for some $t>-1$ when $T$ has class $r=2$. For
$p<\infty$ and $s<\fracnp+1$ this is a serious restriction, which is removed
in our work.

For $A_T$ self-adjoint, the Landesman--Lazer conditions appeared in
\cite{LaLa70} and was further investigated by Hess, Fu\v cik and the
abovementioned 
\cite{Hes74,AAMa78,AmMa78,BrNi78,FuHe78}.
Extensions to slowly decaying $g$ was given in \cite{FuKr77,Hes77,Nec83}, and
more general versions in \cite{RoLa95}; see \cite{RoLa95} for more
references and a survey on the development of solvability conditions,
and in general also \cite{Run90,RoRu96}. 

Here the generalised Landesman--Lazer conditions of
\cite{RoLa95,RoRu96} are extended to the $B^{s}_{p,q}$ and $F^{s}_{p,q}$ with 
$(s,p,q)$ running in the full $\Dm(A_T+g(\cdot))$, including the range
$1<s<\fracnp$; various other improvements in this extension are collected in
Remarks~\ref{orignorm-rem}--\ref{RoRu-rem} below.

\subsection*{Contents} 2.~Main results and
notation, 3.~Composition estimates, 4.~Proof of the regularity
theorem, 5.~The existence results and 6.~Final remarks.

\section{Main Results and Notation} \label{thms-sect}
In general the notation and the spaces are described in
Sections~\ref{notation-ssect}--\ref{spaces-ssect} below, 
so we proceed to present the results.

\bigskip

For convenience, we shall first of all let $E^s_{p,q}$ stand for a space
which can be either $B^s_{p,q}(\overline{\Omega})$ or
$F^s_{p,q}(\overline{\Omega})$. Hereby we avoid repetition when
properties in the $B^s_{p,q}$ spaces carry over verbatim to the
$F^s_{p,q}$ spaces (but $p<\infty$ must be understood in the
$F^s_{p,q}$ case, of course). 

Secondly, $A_T$ will denote the $T$-\emph{realisation} of $A$. That is, for
\begin{equation}
  s>r+\max(\fracp-1,\fracnp-n),
\label{s-cnd}
\end{equation}
where $r=1$ or $r=2$ denotes the class of $T$, the operator $A_T$ acts
like $A$ in the distribution sense and it is defined for those $u\in
E^s_{p,q}$ that satisfy the boundary condition; hence
\begin{gather}
  A_Tu= Au= \sum_{|\alpha|\le 2} a_\alpha D^\alpha u, 
  \label{AT-def}  \\
  D(A_T)=\bigl\{\,u\in E^s_{p,q}\bigm| Tu=0\,\bigr\}=:E^s_{p,q;T}.
  \label{AT-dom}
\end{gather}
For $(s,p,q)=(2,2,2)$ this is just the usual $H^2$-realisation (in
$L_2$), cf.\ \cite[Def.~1.4.1]{G1}.

Thirdly, the problem is then given by the operator
equation
\begin{equation}
  A_Tu+g(u)=f\quad\text{in}\quad E^{s-2}_{p,q},
  \label{AT-pb}
\end{equation}
with $u(x)$ sought in $E^s_{p,q;T}$ for a 
parameter $(s,p,q)$ satisfying \eqref{s-cnd}.  

In our treatment of \eqref{AT-pb} we build on results for the solution
operator for $A_T$ derived in Section~\ref{realisations-sssect} below from 
\cite{JJ96ell}, 
where the Boutet de~Monvel calculus of pseudo-differential boundary
operators is extended to the $B^s_{p,q}$ and $F^s_{p,q}$ spaces. See
also \cite[Ch.~4]{JJ93} for this.

Another basic ingredient is the results for composition (or
Nemytski\u\i) operators $u(x)\mapsto g(u(x))$, written $g(\cdot)$ for short, 
that have been derived in \cite{Sic89} and 
\cite{Run86}; see also \cite{Run85}. For an overview concerning the
Bessel potential spaces see \cite{Sic92}, and for more results
\cite{RuSi96}. 

Once the function $g(t)$ is given, it is natural to ask for the
parameters $(s,p,q)$ such that $T$ and $g(u)$ both make sense on
$E^s_{p,q}$ and such that $g(\cdot)$ respects the continuity
properties of $A$ on $E^s_{p,q}$; i.e.\ we could introduce 
\begin{multline}
  \Dm=\bigl\{\,(s,p,q)\bigm| \text{$T$ and $g(\cdot)$ are bounded
    from $E^s_{p,q}$},
  \\[-1\jot]
  \exists\varepsilon>0: g(E^s_{p,q})\subset 
  E^{s-2+\varepsilon}_{p,q}\,\bigr\},
  \label{M1}
\end{multline}
which would provide a domain of parameters for the  non-linear
operator $A_T+g(\cdot)$ in the
sense that it goes from $E^s_{p,q;T}\subset E^s_{p,q}$ to
$E^{s-2}_{p,q}$ for each $(s,p,q)\in\Dm$\,---\,through
$\varepsilon$, even with a good control over $g(\cdot)$.

However, our results only allow us to treat a slightly smaller set denoted
$\Dm(A_T+g(\cdot))$ and characterised in the following:

\medskip
\begin{thm} \label{dreg-thm}
  Let $(s,p,q)$ be an admissible parameter for which the following
  conditions are fulfilled:
  \begin{alignat*}{2}
    \text{\upn{(i)}}&\qquad s&&>r+\max(\fracp-1,\fracnp-n),
    \\
    \text{\upn{(ii)}}&\qquad s&&>
      \begin{cases}
        0 &\text{for $1\le p<\infty$},
        \\
        \fracnp+\max(-n,-\tfrac{p}{1-p}) &\text{for $p<1$};
      \end{cases}
      \\
      \text{\upn{(iii)}}&\qquad s&&>
      \tfrac{1}{2}(\fracnp+3+\sqrt{(\fracnp-3)^2-8}\,) \quad\text{or}
      \\
      &\qquad  s&&<
      \tfrac{1}{2}(\fracnp+3-\sqrt{(\fracnp-3)^2-8}\,),
      \quad\text{if $\fracnp\ge3+\sqrt8$}.
  \end{alignat*}
  Then \upn{(i)} and \upn{(ii)}--\upn{(iii)}, respectively, assure that
  \begin{align}
    T&\colon B^{s}_{p,q}(\overline{\Omega})\to
    B^{s-d-\fracpi}_{p,q}(\Gamma),\quad
    T\colon F^{s}_{p,q}(\overline{\Omega})\to
    B^{s-d-\fracpi}_{p,p}(\Gamma),
    \label{M2}  \\
    g(\cdot) &\colon E^s_{p,q}\to E^{\sigma}_{p,q}
    \label{M3}
  \end{align}
  are bounded for some $\sigma>s-2$.

  Moreover, in the $F^s_{p,q}$ case, \upn{(ii)} alone implies that
  \eqref{M3} holds for $q=\infty$ and $\sigma$ equal, for any 
  $\varepsilon>0$, to
  \begin{equation}
    \sigma(s,p)=
    \begin{cases}
      s&\text{for $s>\fracnp$ or $0<s<1$,} \\
      s-\varepsilon&\text{for $s=\fracnp$ or $s=1$,} \\
      \tfrac{\fracnp}{\fracnp-s+1}&\text{otherwise}.
    \end{cases}
    \label{sgm-cnd}
  \end{equation}
For $E^s_{p,q}$ with $q\in \,]0,\infty]$ it is possible to take 
$\sigma=\sigma(s,p)-\varepsilon$, for any $\varepsilon>0$. 

  When \upn{(i)}--\upn{(iii)} hold, we say that $(s,p,q)$ belongs to
  $\Dm(A_T+g(\cdot))$. 
\end{thm}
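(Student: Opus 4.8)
The plan is to prove Theorem~\ref{dreg-thm} by separating the two independent assertions: the mapping property \eqref{M2} of the trace operator $T$ (which depends only on condition~(i)), and the composition estimate \eqref{M3} together with the refined exponent $\sigma(s,p)$ in \eqref{sgm-cnd} (which depends on (ii)--(iii)). These two are logically disjoint, so I would treat them in sequence.

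For the trace part, the claim is that condition~(i), namely $s>r+\max(\fracp-1,\fracnp-n)$, makes $\gamma_0$ and $\gamma_1$ (hence $T=S_0\gamma_0+S_1\gamma_1$) well-defined and bounded into the asserted boundary spaces. The plan is to invoke the known trace theorems for $B^s_{p,q}$ and $F^s_{p,q}$ spaces on a smooth domain (as developed in \cite{F2,G3,JJ96ell}): $\gamma_j$ loses $j+\fracpi$ derivatives and the condition $s>j+\fracpi$ — or rather $s>j+\max(\fracp-1,\fracnp-n)$ once one accounts for the $p<1$ range where the critical smoothness is governed by $\fracnp-n$ rather than $\fracpi$ — guarantees both that the restriction is defined on the whole space and that it lands in the Besov space $B^{s-j-\fracpi}_{p,q}(\Gamma)$ (note the target is always a $B$-space, with $q$ replaced by $p$ in the $F$-case, which is the standard trace behaviour). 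Composing with the differential operators $S_j$ of order $d-j$ raises the loss to $d+\fracpi$, giving exactly the target $B^{s-d-\fracpi}_{p,q}(\Gamma)$. Taking $r$ to be the class of $T$ absorbs the worse of the two traces, so condition~(i) as stated is the clean sufficient condition.

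For the composition part, the plan is to apply the composition (Nemytski\u\i) estimates of \cite{Run86,RuSi96,Sic89} for $g\in\Cb^\infty(\R)$. The heart of the matter is to show $g(\cdot)\colon E^s_{p,q}\to E^{\sigma}_{p,q}$ is bounded for the stated $\sigma$. I would organise this by the three regimes appearing in \eqref{sgm-cnd}. In the supercritical range $s>\fracnp$, $E^s_{p,q}$ is a multiplication algebra embedding into $L_\infty$, and the classical result gives $g(\cdot)\colon E^s_{p,q}\to E^s_{p,q}$, i.e.\ no loss, so $\sigma=s$ works. The subcritical range $0<s<1$ likewise allows $\sigma=s$ by the known low-smoothness composition results (using boundedness of $g'$). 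The genuinely interesting regime is the intermediate one $1<s<\fracnp$, where the sharp available target smoothness is only $\sigma(s,p)=\tfrac{\fracnp}{\fracnp-s+1}$; this is exactly the formula quoted from \cite{RuSi96}, and I would derive it from the optimal composition estimates there, with the logarithmic loss $\varepsilon$ appearing at the two transition values $s=1$ and $s=\fracnp$. The condition $\sigma>s-2$ that the theorem requires then becomes an inequality $\tfrac{\fracnp}{\fracnp-s+1}>s-2$, and solving this quadratic in $s$ — it is $s^2-(\fracnp+3)s+(2\fracnp+2)>0$ — produces precisely the two roots $\tfrac12(\fracnp+3\pm\sqrt{(\fracnp-3)^2-8})$ together with the discriminant threshold $\fracnp\ge3+\sqrt8$; this is the content of condition~(iii). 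The passage from the $q=\infty$ statement to general $q$ costs an extra $\varepsilon$ through the monotonicity $E^{\sigma}_{p,\infty}\hookrightarrow E^{\sigma-\varepsilon}_{p,q}$, which accounts for the final sentence.

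The main obstacle I anticipate is establishing the sharp intermediate exponent $\tfrac{\fracnp}{\fracnp-s+1}$ and verifying that it is the correct one to insert so that the algebraic reduction to condition~(iii) is exact rather than merely sufficient. The trace part is essentially a citation, and the endpoint regimes $s>\fracnp$ and $s<1$ are standard; but the computation that the threshold curve $\sigma(s,p)=s-2$ coincides with the parabola in~(iii) — including correctly identifying when the discriminant $(\fracnp-3)^2-8$ is nonnegative so that real roots exist, and checking that for $\fracnp<3+\sqrt8$ the inequality $\sigma>s-2$ holds automatically throughout the admissible $s$-range (so that (iii) imposes no constraint there) — is where the real bookkeeping lies. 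I would defer the borderline cases $s=1$, $s=\fracnp$ and the boundary of the parabola, as the theorem explicitly does, and present the $\varepsilon$-room in \eqref{sgm-cnd} as the mechanism that keeps the strict inequality $\sigma>s-2$ intact.
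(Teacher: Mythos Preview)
Your plan matches the paper's proof closely: trace via citation, composition organised by the three regimes from \cite{Run86,Sic89,RuSi96}, condition~(iii) recovered by solving $\sigma(s,p)=s-2$, and the passage to general $q$ via the $\varepsilon$-loss embedding. Two concrete corrections are needed. First, the quadratic should read $s^2-(\fracnp+3)s+3\fracnp+2>0$: with your constant term $2\fracnp+2$ the discriminant is $(\fracnp-1)^2$ and the roots are $2$ and $\fracnp+1$, not the ones you then quote; the correct constant $3\fracnp+2$ gives discriminant $(\fracnp-3)^2-8$ and the roots in~(iii). Second, you have not explained where the second line of condition~(ii), namely $s>\fracnp-\tfrac{p}{1-p}$ for $p<1$, comes from; in the paper this is not a separate assumption on the composition result but the hypothesis $\sigma(s,p)>(\fracnp-n)_+$ required to invoke Sickel's Lemma~3 in the regime $1<s<\fracnp$, and the equivalence $\sigma(s,p)>\fracnp-n\iff s>\fracnp-\tfrac{p}{1-p}$ is a short computation you should include.
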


This theorem gives sufficent conditions for $g(\cdot)$ to be of a
lower order than $A_T$, so it may be termed the Direct Regularity
Theorem for \eqref{cmp-pb}.
\linebreak[5]

In comparison with \eqref{M1}, we have excluded borderline cases with
equality in (i) and values of $s$ between $\fracnp-n$ and
$\fracnp-\frac{p}{1-p}$. The latter restriction is
felt in a small set of $(s,p,q)$'s, for in (ii) it only applies
for $p<1$ and in this region $s>r+\fracnp-n$ is stronger to begin with
(since $r=1$ or $2$) and afterwards the second requirement in (iii) quickly
takes over, cf.~Figure~1. The first part of (iii) is 
stronger than $s>\fracnp-\frac{p}{1-p}$, hence stronger than (ii). Exceptions
for $n=1$, $2$, $3$ or $r=2$ are given in
Remarks~\ref{r=2-rem}--\ref{condition-rem} below.

It is expected, but not proved, that the function $\sigma(s,p)$ in
\eqref{sgm-cnd} may be 
used in \eqref{M3} also for $q<\infty$, and even then also in the
Besov case. 

Nevertheless the function $\sigma(s,p)$ gives the right understanding
of the conditions (ii)--(iii) (the sum-exponents are
less important because $E^s_{p,q}\hookrightarrow
E^s_{p,r}$ for $q\le r$). On the one hand, (ii) gives either
$s>(\fracnp-n)_+$, so that $E^s_{p,q}\subset L_1^{\op{loc}}$ and hence
$g(\cdot)$ makes sense, or $s>\fracnp-\frac{p}{1-p}$, which may be
seen to yield $E^{\sigma(s,p)}_{p,q}\subset L_1^{\op{loc}}$. Perhaps
the latter condition is only proof-technical; it is used to make sense of
products $u\dots u$ when estimating $g(u)$.

On the other hand, asking for the identity
\begin{equation}
  \sigma(s,p)=s-2,
  \label{M4}
\end{equation}
or for the level curve for the value $2$ of the loss-of-smoothness
function $s-\sigma(s,p)$, one finds
\begin{equation}
  (2s-\fracnp-3)^2=(\fracnp-3)^2-8,
  \label{M5}
\end{equation}
which leads to (iii) with $=$ instead of the inequalities for~$s$.

In other words: condition (iii), or \eqref{M5},
determines a borderline to a region of spaces where the loss of
smoothness equals or exceeds $2$. Generally speaking this is correct,
for if (iii) is violated by $E^s_{p,q}$ then $u\mapsto
\sin(u)$, for example, cannot map into $E^{s-2+\varepsilon}_{p,q}$ for
any $\varepsilon>0$; cf.~ Remark~\ref{Dahlberg-rem} below.

The identity in \eqref{M5} describes a hyperbola in the
$(\fracnp,s)$-halfplane, that  lies entirely in the area with
$1<s<\fracnp$. Hence (iii) is relevant only for the consideration of
unbounded solutions in \eqref{AT-pb}.

To present an overview, the spaces $E^s_{p,q;T}$ for which the
perturbation $g(u)$ is studied in the present article are illustrated
in Figure~\ref{DM-fig} (for simplicity only for $r=1$). 
The sum-exponent $q$ is not represented in the diagram, but because of
the sharp inequalities in Theorem~\ref{dreg-thm} and the existence of simple
embeddings, $q$ does not have any influence.

\begin{figure}[htbp]
\hfil
\setlength{\unitlength}{0.0125in}
\begin{picture}(380,360)(0,0)

\path(7,30)(13,30)                                   
\path(7,70)(13,70)                                   
\path(126.569,7)(126.569,13)                         
\dottedline{5}(130,20)(130,220)

\thicklines
\path(10,10)(360,39.1667)                            
\path(230,10)(310,90)                                

\path(10,0)(10,350)\path(12,342)(10,350)(8,342)
\path(8,10)(370,10)\path(362,8)(370,10)(362,12)
\put(7,355){\makebox(0,0)[lb]{$s$}}
\put(375,9){\makebox(0,0)[lc]{$\fracnp$}}

\put(0,7){\makebox(0,0)[lb]{$\scriptstyle 0$}}
\put(5,30){\makebox(0,0)[rc]{$\scriptstyle 1$}}
\put(5,70){\makebox(0,0)[rc]{$\scriptstyle 3$}}
\put(126.568,5){\makebox(0,0)[ct]{$\scriptscriptstyle 3+\sqrt8$}}

\put(130,300){\makebox(0,0)[cb]{$\Dm(A_T+g(\cdot))$}}
\put(125,200){\makebox(0,0)[rb]{$\scriptstyle p=2$}}
\put(350,40){\makebox(0,0)[cb]{(i) $\scriptstyle s=\fracpi-1+r$}}
\put(350,95){\makebox(0,0)[cb]{(i) $\scriptstyle s=\fracci np-n+r$}}
\put(185,150){\makebox(0,0)[cb]{(iii)}}
\put(350,210){\makebox(0,0)[cb]{(ii) $\scriptstyle s=\fracci np-\frac p{1-p}$}}


\path(126.569,98.284)(127.069,102.303)(127.569,104.126)
(128.069,105.591)(128.569,106.872)(129.069,108.036)(129.569,109.117)
(130.569,111.107)(131.569,112.936)(132.569,114.652)(133.569,116.284)
(134.569,117.849)(135.569,119.362)(136.569,120.830)(137.569,122.261)
(138.569,123.660)(139.569,125.031)(140.569,126.379)(141.569,127.705)
(142.569,129.012)(143.569,130.302)(144.569,131.577)(145.569,132.837)
(146.569,134.085)(147.569,135.321)(148.569,136.547)(149.569,137.763)
(150.569,138.969)(151.569,140.167)(152.569,141.357)(153.569,142.540)
(154.569,143.716)(155.569,144.886)(156.569,146.049)(157.569,147.207)
(158.569,148.359)(159.569,149.506)(160.569,150.649)(161.569,151.787)
(162.569,152.921)(163.569,154.050)(164.569,155.176)(165.569,156.298)
(166.569,157.417)(167.569,158.532)(168.569,159.644)(169.569,160.753)
(170.569,161.860)(171.569,162.963)(172.569,164.064)(173.569,165.162)
(174.569,166.258)(175.569,167.351)(176.569,168.442)(177.569,169.531)
(178.569,170.618)(179.569,171.702)(180.569,172.785)(181.569,173.866)
(182.569,174.946)(183.569,176.023)(184.569,177.099)(185.569,178.173)
(186.569,179.246)(187.569,180.317)(188.569,181.386)(189.569,182.455)
(190.569,183.521)(191.569,184.587)(192.569,185.651)(193.569,186.714)
(194.569,187.776)(195.569,188.837)(196.569,189.896)(197.569,190.954)
(198.569,192.012)(199.569,193.068)(200.569,194.123)(201.569,195.178)
(202.569,196.231)(203.569,197.283)(204.569,198.335)(205.569,199.385)
(206.569,200.435)(207.569,201.484)(208.569,202.532)(209.569,203.580)
(210.569,204.626)(211.569,205.672)(212.569,206.717)(213.569,207.761)
(214.569,208.805)(215.569,209.848)(216.569,210.890)(217.569,211.932)
(218.569,212.973)(219.569,214.014)(220.569,215.053)(221.569,216.093)
(222.569,217.131)(223.569,218.169)(224.569,219.207)(225.569,220.244)
(226.569,221.280)(227.569,222.316)(228.569,223.352)(229.569,224.387)
(230.569,225.421)(231.569,226.455)(232.569,227.489)(233.569,228.522) 
(234.569,229.555)(235.569,230.587)(236.569,231.619)(237.569,232.650)
(238.569,233.681)(239.569,234.712)(240.569,235.742)(241.569,236.772)
(242.569,237.801)(243.569,238.830)(244.569,239.859)(245.569,240.887)
(246.569,241.915)(247.569,242.943)(248.569,243.970)(249.569,244.997)
(250.569,246.024)(251.569,247.050)(252.569,248.076)(253.569,249.102)
(254.569,250.127)(255.569,251.152)(256.569,252.177)(257.569,253.202)
(258.569,254.226)(259.569,255.250)(260.569,256.274)(261.569,257.297)
(262.569,258.320)(263.569,259.343)(264.569,260.366)(265.569,261.389)
(266.569,262.411)(267.569,263.433)(268.569,264.454)(269.569,265.476)
(270.569,266.497)(271.569,267.518)(272.569,268.539)(273.569,269.560)
(274.569,270.580)(275.569,271.600)(276.569,272.620)(277.569,273.640)
(278.569,274.660)(279.569,275.679)(280.569,276.698)(281.569,277.717)
(282.569,278.736)(283.569,279.755)(284.569,280.773)(285.569,281.791)
(286.569,282.809)(287.569,283.827)(288.569,284.845)(289.569,285.862)
(290.569,286.880)(291.569,287.897)(292.569,288.914)(293.569,289.931)
(294.569,290.948)(295.569,291.964)(296.569,292.981)(297.569,293.997)
(298.569,295.013)(299.569,296.029)(300.569,297.045)(301.569,298.061)
(302.569,299.076)(303.569,300.092)(304.569,301.107)(305.569,302.122)
(306.569,303.137)(307.569,304.152)(308.569,305.167)(309.569,306.181)
(310.569,307.196)(311.569,308.210)(312.569,309.224)(313.569,310.239)
(314.569,311.253)(315.569,312.266)(316.569,313.280)(317.569,314.294)
(318.569,315.307)(319.569,316.321)(320.569,317.334)(321.569,318.347)
(322.569,319.360)(323.569,320.373)(324.569,321.386)(325.569,322.399)
(326.569,323.412)(327.569,324.424)(328.569,325.437)(329.569,326.449)
(330.569,327.461)(331.569,328.473)(332.569,329.486)(333.569,330.497)
(334.569,331.509)(335.569,332.521)(336.569,333.533)(337.569,334.544)
(338.569,335.556)(339.569,336.567)(340.569,337.579)(341.569,338.590)
(342.569,339.601)(343.569,340.612)(344.569,341.623)(345.569,342.634)
(346.569,343.645)(347.569,344.656)(348.569,345.666)(349.569,346.677)
(350.569,347.688)(351.569,348.698)(352.569,349.708)(353.569,350.719)
(354.569,351.729)(355.569,352.739)(356.569,353.749)(357.569,354.759)
(358.569,355.769)(359.569,356.779)


\path(126.569,98.284)(127.069,94.765)(127.569,93.443)
(128.069,92.478)(128.569,91.697)(129.569,90.451) 
(130.569,89.461)(131.569,88.632)(132.569,87.916)(133.569,87.285)
(134.569,86.719)(135.569,86.207)(136.569,85.739)(137.569,85.308)
(138.569,84.909)(139.569,84.537)(140.569,84.190)(141.569,83.864)
(142.569,83.557)(143.569,83.267)(144.569,82.992)(145.569,82.731)
(146.569,82.483)(147.569,82.247)(148.569,82.022)(149.569,81.806)
(150.569,81.599)(151.569,81.401)(152.569,81.211)(153.569,81.028)
(154.569,80.852)(155.569,80.683)(156.569,80.520)(157.569,80.362)
(158.569,80.209)(159.569,80.062)(160.569,79.920)(161.569,79.782)
(162.569,79.648)(163.569,79.518)(164.569,79.392)(165.569,79.270)
(166.569,79.152)(167.569,79.036)(168.569,78.924)(169.569,78.815)
(170.569,78.709)(171.569,78.606)(172.569,78.505)(173.569,78.407)
(174.569,78.311)(175.569,78.218)(176.569,78.127)(177.569,78.038)
(178.569,77.951)(179.569,77.866)(180.569,77.783)(181.569,77.702)
(182.569,77.623)(183.569,77.546)(184.569,77.470)(185.569,77.396)
(186.569,77.323)(187.569,77.252)(188.569,77.182)(189.569,77.114)
(190.569,77.047)(191.569,76.982)(192.569,76.917)(193.569,76.854)
(194.569,76.793)(195.569,76.732)(196.569,76.672)(197.569,76.614)
(198.569,76.557)(199.569,76.500)(200.569,76.445)(201.569,76.391)
(202.569,76.338)(203.569,76.285)(204.569,76.234)(205.569,76.183)
(206.569,76.133)(207.569,76.084)(208.569,76.036)(209.569,75.989)
(210.569,75.942)(211.569,75.897)(212.569,75.852)(213.569,75.807)
(214.569,75.763)(215.569,75.720)(216.569,75.678)(217.569,75.637)
(218.569,75.595)(219.569,75.555)(220.569,75.515)(221.569,75.476)
(222.569,75.437)(223.569,75.399)(224.569,75.362)(225.569,75.325)
(226.569,75.288)(227.569,75.252)(228.569,75.217)(229.569,75.182)
(230.569,75.147)(231.569,75.113)(232.569,75.080)(233.569,75.047)
(234.569,75.014)(235.569,74.982)(236.569,74.950)(237.569,74.919)
(238.569,74.888)(239.569,74.857)(240.569,74.827)(241.569,74.797)
(242.569,74.768)(243.569,74.738)(244.569,74.710)(245.569,74.681)
(246.569,74.653)(247.569,74.626)(248.569,74.598)(249.569,74.572)
(250.569,74.545)(251.569,74.518)(252.569,74.492)(253.569,74.467)
(254.569,74.441)(255.569,74.416)(256.569,74.391)(257.569,74.367)
(258.569,74.342)(259.569,74.318)(260.569,74.295)(261.569,74.271)
(262.569,74.248)(263.569,74.225)(264.569,74.202)(265.569,74.180)
(266.569,74.158)(267.569,74.136)(268.569,74.114)(269.569,74.093)
(270.569,74.071)(271.569,74.050)(272.569,74.029)(273.569,74.009)
(274.569,73.988)(275.569,73.968)(276.569,73.948)(277.569,73.929)
(278.569,73.909)(279.569,73.890)(280.569,73.870)(281.569,73.851)
(282.569,73.833)(283.569,73.814)(284.569,73.796)(285.569,73.777)
(286.569,73.759)(287.569,73.741)(288.569,73.724)(289.569,73.706)
(290.569,73.689)(291.569,73.671)(292.569,73.654)(293.569,73.638)
(294.569,73.621)(295.569,73.604)(296.569,73.588)(297.569,73.571)
(298.569,73.555)(299.569,73.539)(300.569,73.524)(301.569,73.508)
(302.569,73.492)(303.569,73.477)(304.569,73.462)(305.569,73.446)
(306.569,73.431)(307.569,73.417)(308.569,73.402)(309.569,73.387)
(310.569,73.373)(311.569,73.358)(312.569,73.344)(313.569,73.330)
(314.569,73.316)(315.569,73.302)(316.569,73.288)(317.569,73.275)
(318.569,73.261)(319.569,73.248)(320.569,73.234)(321.569,73.221)
(322.569,73.208)(323.569,73.195)(324.569,73.182)(325.569,73.170)
(326.569,73.157)(327.569,73.144)(328.569,73.132)(329.569,73.120)
(330.569,73.107)(331.569,73.095)(332.569,73.083)(333.569,73.071)
(334.569,73.059)(335.569,73.047)(336.569,73.036)(337.569,73.024)
(338.569,73.013)(339.569,73.001)(340.569,72.990)(341.569,72.979)
(342.569,72.967)(343.569,72.956)(344.569,72.945)(345.569,72.934)
(346.569,72.923)(347.569,72.913)(348.569,72.902)(349.569,72.891)
(350.569,72.881)(351.569,72.870)(352.569,72.860)(353.569,72.850)
(354.569,72.840)(355.569,72.829)(356.569,72.819)(357.569,72.809)
(358.569,72.799)(359.569,72.790)


\path(269,16.368)
(270,30.000)(271,42.429)(272,53.818)(273,64.304)(274,74.000)
(275,83.000)(276,91.385)(277,99.222)(278,106.571)(279,113.483)
(280,120.000)(281,126.161)(282,132.000)(283,137.545)(284,142.824)
(285,147.857)(286,152.667)(287,157.270)(288,161.684)(289,165.923)
(290,170.000)(291,173.927)(292,177.714)(293,181.372)(294,184.909)
(295,188.333)(296,191.652)(297,194.872)(298,198.000)(299,201.041)
(300,204.000)(301,206.882)(302,209.692)(303,212.434)(304,215.111)
(305,217.727)(306,220.286)(307,222.789)(308,225.241)(309,227.644)
(310,230.000)(311,232.311)(312,234.581)(313,236.810)(314,239.000)
(315,241.154)(316,243.273)(317,245.358)(318,247.412)(319,249.435)
(320,251.429)(321,253.394)(322,255.333)(323,257.247)(324,259.135)
(325,261.000)(326,262.842)(327,264.662)(328,266.462)(329,268.241)
(330,270.000)(331,271.741)(332,273.463)(333,275.169)(334,276.857)
(335,278.529)(336,280.186)(337,281.828)(338,283.455)(339,285.067)
(340,286.667)(341,288.253)(342,289.826)(343,291.387)(344,292.936)
(345,294.474)(346,296.000)(347,297.515)(348,299.020)(349,300.515)
(350,302.000)(351,303.475)(352,304.941)(353,306.398)(354,307.846)
(355,309.286)(356,310.717)(357,312.140)(358,313.556)(359,314.963)

\end{picture}
\hfil
\caption{The set $\Dm(A_T+g(\cdot))$ for $n=12$ and $r=1$; the boundary
curves are labelled as in Theorem~\ref{dreg-thm}. Dots
indicate the spaces with $p=2$.}
  \label{DM-fig}
\end{figure}
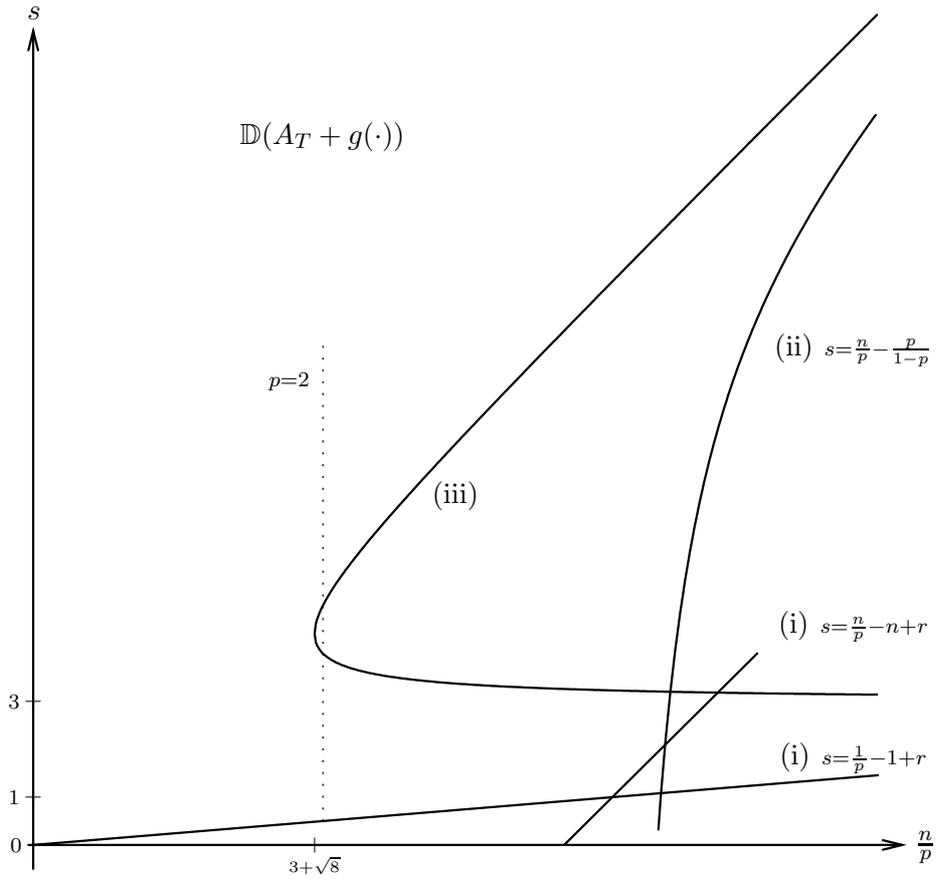

The lines with $s=3$ and $s=\fracnp$ are the asymptotes of the
hyperbola, and for all points on this level curve,
\begin{equation}
  \fracnp\ge 3+\sqrt8.
  \label{M6}
\end{equation}
The interest of this is that for $n\ge12$ even the theory within the
classical $H^s$ Sobolev spaces is affected by (iii) in
Theorem~\ref{dreg-thm}. Actually $s$ should be taken outside of an
interval of length $\sqrt{(\frac{n}{2}-3)^2-8}$, which is at least $1$
and $\cal O(\frac{n}{2})$ for $n\to \infty$. Moreover, for each
$n\ge6$ there are $p>1$ fulfilling \eqref{M6}, so
restrictions occur also in the $W^s_p$ and
$H^s_p$ spaces for such dimensions. 

In addition to the general pattern described above, see
Section~\ref{inter-ssect} below for the atypical cases with $n=1$, $n=2$ or
$r=2$. 

At the moment it is not clear whether the condition
$s>\fracnp-\frac{p}{1-p}$ is necessary or not, but in any case it
won't change the fact that the sets $\Dm(A_T+g(\cdot))$ are non-convex, 
because already for $g(u)=\sin(u)$ the condition (iii)
is best possible. We believe that the specific form of the $\Dm$'s and
in particular the non-convexity constitutes a novelty.

\bigskip

Because $\sigma>s-2$ is possible in $\Dm(A_T+g(\cdot))$, 
the non-linear operator
$g(\cdot)$ also respects the \emph{inverse} regularity properties of $A_T$ on
every $E^s_{p,q;T}$ with parameter in $\Dm(A_T+g(\cdot))$:

\begin{thm} \label{ireg-thm}
  Let  $u(x)$ in $E^{s_1}_{p_1,q_1;T}$ solve
  \begin{equation}
    A_Tu+g(u)=f
    \label{M7}
  \end{equation}
  for data $f(x)$ in $E^{s_0-2}_{p_0,q_0}$ and suppose that
  \begin{equation}
    (s_1,p_1,q_1), (s_0,p_0,q_0)\in\Dm(A_T+g(\cdot)).
    \label{M8}
  \end{equation}
  Then the solution $u(x)$ also belongs to the space
  $E^{s_0}_{p_0,q_0;T}$. 
\end{thm}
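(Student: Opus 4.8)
The plan is to turn \eqref{M7} into a fixed-point identity for $u$ by means of the linear parametrix, and then to improve the a priori regularity of $u$ by a finite bootstrap, exploiting that, by Theorem~\ref{dreg-thm}, the composition $g(\cdot)$ has order strictly below $2$ at every point of $\Dm(A_T+g(\cdot))$. First I would invoke the linear theory for $\{A,T\}$ from \cite{JJ96ell} (recalled in Section~\ref{realisations-sssect} below): by ellipticity $A_T$ has a parametrix, i.e.\ a bounded solution operator $R\colon E^{t-2}_{p,q}\to E^{t}_{p,q;T}$ for every admissible $(t,p,q)$, with $RA_T=I-P_0$, where $P_0$ is the projection onto the finite-dimensional, $C^\infty$-smooth kernel of $A_T$. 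Applying $R$ to $A_Tu=f-g(u)$ gives the identity
\begin{equation}
  u=Rf-Rg(u)+w,\qquad w:=P_0u\in C^\infty .
  \label{irg-fix}
\end{equation}
Here $Rf\in E^{s_0}_{p_0,q_0}$ by \eqref{M8} and the order of $R$, and $w$ is harmless; so the entire problem reduces to showing that the term $Rg(u)$ lies in $E^{s_0}_{p_0,q_0}$.

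The bootstrap mechanism is as follows. Whenever $u\in E^{\tau}_{\pi,\theta}$ with $(\tau,\pi,\theta)\in\Dm(A_T+g(\cdot))$, Theorem~\ref{dreg-thm} yields $g(u)\in E^{\sigma}_{\pi,\theta}$ for some $\sigma>\tau-2$ governed by $\sigma(\tau,\pi)$ in \eqref{sgm-cnd}; hence $Rg(u)\in E^{\sigma+2}_{\pi,\theta}$ with a strict smoothness gain $\delta(\tau,\pi):=\sigma(\tau,\pi)+2-\tau>0$. Feeding this back into \eqref{irg-fix}, together with $Rf\in E^{s_0}_{p_0,q_0}$ and $w\in C^\infty$, upgrades the membership of $u$: embedding the two summands into a common space shows $u$ lies in a space strictly better than $E^{\tau}_{\pi,\theta}$, and iteration raises the regularity step by step.

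The delicate point, and the reason the exponents must be allowed to vary, is that $\Dm(A_T+g(\cdot))$ is non-convex: the set excluded by (iii)\,---\,bounded by the hyperbola \eqref{M5}\,---\,forms a ``hole'' present only for $\fracnp\ge3+\sqrt8$, so a naive increase of $\tau$ at fixed $\pi$ may leave $\Dm(A_T+g(\cdot))$. I would therefore route the iteration along a finite chain $(\tau_k,\pi_k,\theta_k)\in\Dm(A_T+g(\cdot))$ joining $(s_1,p_1,q_1)$ to $(s_0,p_0,q_0)$, alternating two moves: (a) apply $R\circ g$ to gain $\delta(\tau_k,\pi_k)>0$ derivatives at fixed $\pi_k$; and (b) use sharp Sobolev embeddings $E^{a}_{\pi}\hookrightarrow E^{b}_{\pi'}$ to change the integral exponent, in particular to pass into a scale with $\fracci{n}{\pi}<3+\sqrt8$, where the hole is absent and $\tau$ may be raised freely. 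One checks, from the explicit lower boundaries (i)--(ii) and the fact that (iii) only binds for $\fracci{n}{\pi}\ge3+\sqrt8$, that such a chain stays inside $\Dm(A_T+g(\cdot))$; since $Rf$ already supplies the exact target regularity, it suffices to drive $Rg(u)$ into $E^{s_0}_{p_0,q_0}$, i.e.\ $g(u)$ into $E^{s_0-2}_{p_0,q_0}$. Once $u\in E^{s_0}_{p_0,q_0}$ is reached, the boundedness of $T$ there (guaranteed by (i) via \eqref{M2}) together with the a priori relation $Tu=0$ gives $u\in E^{s_0}_{p_0,q_0;T}$, as claimed.

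I expect the main obstacle to be the simultaneous navigation of the non-convex $\Dm(A_T+g(\cdot))$ and the finiteness of the iteration. The gain $\delta(\tau,\pi)=\sigma(\tau,\pi)+2-\tau$ degenerates to $0$ as one approaches the hyperbola \eqref{M5}, so the chain must be kept uniformly away from it\,---\,or the number of steps estimated so that the accumulated gain exceeds the required amount\,---\,and producing a chain that actually connects the two prescribed triples demands a case analysis according to whether each lies below the lower branch, above the upper branch, or already in a hole-free scale. The remaining bookkeeping, namely reducing the sum of two spaces arising from \eqref{irg-fix} to a single space by embeddings and discarding the finite-rank smooth term $P_0u$, is routine but must be carried along at each step.
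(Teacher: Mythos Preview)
Your proposal is correct and follows essentially the same route as the paper: derive the parametrix identity $u=R_0f-R_0g(u)+\cal Ru$ (with $\cal R$ smoothing of finite rank, which the paper does not insist be the kernel projection), then run a finite bootstrap through a chain of parameters in $\Dm(A_T+g(\cdot))$, using Sobolev embeddings to shift the integral exponent into the region $\fracnp<3+\sqrt8$ so as to skirt the hyperbola before raising $s$. The paper's Section~\ref{irpf-ssect} organises precisely the case analysis you anticipate (the four sign combinations of $s_1+\delta_1-s_0$ and $(s_1+\delta_1-\fracc n{p_1})-(s_0-\fracc n{p_0})$, plus the three sub-steps $1^\circ$--$3^\circ$ of the ``worst case'') and confirms that the gain $\delta_j$ stays bounded below along the constructed chain, giving termination in finitely many steps.
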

To prove this we use Theorem~\ref{dreg-thm}
for $g(u)$ and results for the Boutet de~Monvel calculus in
\cite{JJ96ell} for $A_T$. These tools are combined into a bootstrap
argument, but one has to `go around the corner' inside
$\Dm(A_T+g(\cdot))$, because of the non-convexity; cf.\ 
Figure~\ref{wrst-fig} below. 

It is interesting to observe that the set $\Dm(A_T+g(\cdot))$\,---\,in
contrast to Theorem~\ref{dreg-thm}\,---\,is non-optimal with respect to
$(s_0,p_0,q_0)$, cf.\ Remark~\ref{ireg-rem}.

\bigskip

Concerning the solvability of the problem in \eqref{AT-pb}  
it is noted that
the Fredholm properties of $A_T$ depend neither on the parameter
$(s,p,q)$ nor on whether the $B^s_{p,q}$ or the $F^s_{p,q}$ spaces are
considered.

That is to say, because of the ellipticity and the right-invertibility
of $T$, there exists two finite dimensional subspaces $\ker{A_T}$ and $\cal N$
of $C^\infty(\overline{\Omega})$ such that when
$s>r+\max(\fracp-1,\fracnp-n)$ the following holds:
\begin{align}
  \ker{A_T}&=\bigl\{\,u\in E^s_{p,q;T}\bigm| A_Tu=0\,\bigr\},
  \label{M11} \\
  E^{s-2}_{p,q}&=\cal N\oplus A_T(E^s_{p,q;T});  
  \label{M12}
\end{align}
and $A_T(E^s_{p,q;T})$ is closed. This is a consequence of
\cite[Thm.~1.3]{JJ96ell}; see Section~\ref{ireg-sect} below for details.
In particular $A_T$ is bijective for all admissible parameters $(s,p,q)$ if
(and only if) it is so for one.

Among the conditions that assure solvability of \eqref{AT-pb} we consider:
\begin{itemize}
  \item[(I)] $A_T$ is invertible.
  \label{inv-cnd}
  \item[(II)] For each bounded sequence $(v_k)$ in $L_{t-0}$,
$\frac1t=(\fracp-\frac sn)_+$, and each
$L_\infty$-convergent sequence $(w_k)$ in $\ker A_T$ with
$\norm{w_k}{L_\infty}=1$, 
\begin{equation}
  \int_\Omega g(v_k+t_kw_k)w_k\,dx-\dual{f}{w_k}\ge0
  \label{RL+-cnd}
\end{equation}
holds for some $k\in\N$ when $t_k\to \infty$ for $k\to \infty$.
  \item[(III)] Under the hypothesis of (II),
\begin{equation}
  \int_\Omega g(v_k+t_kw_k)w_k\,dx-\dual{f}{w_k}\le0
  \label{RL--cnd}
\end{equation}
holds for some $k\in\N$ when $t_k\to \infty$ for $k\to \infty$.
\end{itemize}
It should be understood that $L_{t-0}$ means $L_t$, except when
$B^{s}_{p,q;T}$ is considered for $q>t$ where $t-0$ denotes any $t'<t$. This
ensures $E^{s}_{p,q;T}\hookrightarrow L_{t-0}$ in any case, cf.\
\eqref{1.22}--\eqref{1.25}.

Both (II) and (III) are posed for each $f$ in $E^{s-2}_{p,q}$ with
$(s,p,q)$ in $\Dm(A_T+g(\cdot))$; since the requirements are void if $A_T$
is injective, (I) implies both of them. When $g(t)$ is odd, 
$\text{(II)}\iff\text{(III)}$ holds, reflecting that $A_T+g(\cdot)$ then
sends $u$ to $f$ if and only if $-u$ is mapped to $-f$. If $g$ is even, then
(II) holds for $f$ precisely when $-f$ satisfies (III) for $-g$ (and
$A_Tu+g(u)=f$ if and only if $A_T-g(\cdot)$ maps $-u$ to $-f$, then). 

\begin{thm} \label{solv-thm}
  Let  $(s,p,q)$ fulfil 
  \upn{(i)}--\upn{(iii)} in Theorem~\ref{dreg-thm}, let $f(x)$
  be given in $E^{s-2}_{p,q}$, and let $A_T$
  satisfy \upn{(I)}, or let $A_T$ be self-adjoint and $f(x)$  have
  one of the properties in \upn{(II)} or \upn{(III)} above. 
   Then the equation
  \begin{equation}
    A_Tu+g(u)=f
    \label{M10}
  \end{equation}
  has at least one solution $u(x)$ belonging to $E^s_{p,q;T}$.
\end{thm}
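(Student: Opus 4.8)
The plan is to run a Leray--Schauder fixed-point argument in an auxiliary space of \emph{low} smoothness, where the boundedness of $g$ supplies a free a priori estimate, and then to transport the solution up to the prescribed space $E^s_{p,q;T}$ by the inverse regularity of Theorem~\ref{ireg-thm}. First I would fix reference parameters $(s_*,p_*,q_*)\in\Dm(A_T+g(\cdot))$ with $s_*<2$ and $p_*,q_*\ge1$ for which $f\in E^{s_*-2}_{p_*,q_*}$. Such parameters exist: choosing $p_*\ge p$ suitably relaxes condition~(i) of Theorem~\ref{dreg-thm} just enough to admit some $s_*<2$ in $\Dm(A_T+g(\cdot))$ — here condition~(iii) is vacuous, since its excluded band lies in $s>3$, and condition~(ii) then follows automatically — while the Sobolev embedding $E^{s-2}_{p,q}\hookrightarrow E^{s_*-2}_{p_*,q_*}$ on the bounded domain $\overline\Omega$ keeps the data in the reference data space. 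Working in this genuine Banach space also sidesteps the quasi-Banach difficulties present for the target when $p<1$ or $q<1$.

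In the invertible case~(I) I would solve \eqref{M10} in $E^{s_*}_{p_*,q_*;T}$ via the map $\Phi(v)=A_T^{-1}f-A_T^{-1}g(v)$. The first term is a fixed element of $E^{s_*}_{p_*,q_*;T}$, while the second carries bounded sets of $E^{s_*}_{p_*,q_*}$ into bounded subsets of $E^{\sigma+2}_{p_*,q_*;T}$, by Theorem~\ref{dreg-thm} (which gives $g(\cdot)\colon E^{s_*}_{p_*,q_*}\to E^{\sigma}_{p_*,q_*}$ with $\sigma>s_*-2$) and the two-derivative gain of $A_T^{-1}$; as $\sigma+2>s_*$ the embedding $E^{\sigma+2}_{p_*,q_*;T}\hookrightarrow E^{s_*}_{p_*,q_*;T}$ is compact, so $\Phi$ is compact, and it is continuous because $g(\cdot)$ and $A_T^{-1}$ are. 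The decisive a priori bound comes from $s_*<2$: any $v=\lambda\Phi(v)$ with $\lambda\in[0,1]$ satisfies $A_Tv=\lambda(f-g(v))$, so $\norm{v}{E^{s_*}_{p_*,q_*}}\le C(\norm{f}{E^{s_*-2}_{p_*,q_*}}+\norm{g(v)}{E^{s_*-2}_{p_*,q_*}})$, and since $L_\infty\hookrightarrow E^{s_*-2}_{p_*,q_*}$ the last term is dominated by $C\norm{g}{L_\infty}$, \emph{independently} of $v$. The Leray--Schauder theorem in Schaefer's a priori-bound form then produces a fixed point $u_*$, i.e.\ a solution of \eqref{M10} in $E^{s_*}_{p_*,q_*;T}$. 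Applying Theorem~\ref{ireg-thm} with $(s_1,p_1,q_1)=(s_*,p_*,q_*)$ and $(s_0,p_0,q_0)=(s,p,q)$ — both in $\Dm(A_T+g(\cdot))$ and with $f\in E^{s-2}_{p,q}$ — finally gives $u_*\in E^s_{p,q;T}$.

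For the self-adjoint case under (II) or~(III), $A_T$ may have a nontrivial kernel, and I would invoke the Fredholm structure \eqref{M11}--\eqref{M12}: $\ker{A_T}$ and $\cal N$ are finite-dimensional subspaces of $C^\infty(\overline\Omega)$, the range is closed with $E^{s_*-2}_{p_*,q_*}=\cal N\oplus A_T(E^{s_*}_{p_*,q_*;T})$, and self-adjointness pairs $\cal N$ with $\ker{A_T}$. Writing $v=w+z$ with $w\in\ker{A_T}$ and $z$ in a fixed complement, I would split \eqref{M10} by the projection $P$ onto the range and the complementary projection $Q=I-P$ onto $\cal N$. For each fixed $w$ the range equation $A_Tz+Pg(w+z)=Pf$ is solved for $z=z(w)$ exactly as above (now with $A_T$ inducing an isomorphism of the complements), the solutions staying in a fixed bounded set by the same $L_\infty$-bound and depending continuously on $w$; the residual equation $Qg(w+z(w))=Qf$ is then a \emph{finite-dimensional} problem for $w\in\ker{A_T}$. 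Here the Landesman--Lazer conditions are decisive: the sign requirements on $\int_\Omega g(v_k+t_kw_k)w_k\,dx-\dual{f}{w_k}$ in (II)/(III) force the associated vector field on $\ker{A_T}$ to point consistently on all sufficiently large spheres, so that its Brouwer degree over a large ball is nonzero and a zero $w_0$ exists. Then $u_*=w_0+z(w_0)$ solves \eqref{M10} in the reference space, and the lifting by Theorem~\ref{ireg-thm} concludes the proof as before.

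The main obstacle is the a priori estimate for $s\ge2$: there $L_\infty\not\hookrightarrow E^{s-2}_{p,q}$, so $\norm{g(v)}{E^{s-2}_{p,q}}$ can no longer be bounded by $\norm{g}{L_\infty}$ and a direct fixed-point scheme in $E^s_{p,q;T}$ appears hopeless — the reference-space detour together with Theorem~\ref{ireg-thm} exists precisely to bypass this. Two further technical points deserve care: the existence of admissible reference parameters inside the non-convex set $\Dm(A_T+g(\cdot))$, which rests on the sharp form of condition~(i), and the degree computation in the self-adjoint case, where conditions (II)/(III) must be converted into non-vanishing of the Brouwer degree.
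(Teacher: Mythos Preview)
Your reduction step and your treatment of case~(I) coincide with the paper's proof: one drops to a reference space $E^{s_*}_{p_*,q_*}$ with $s_*<2$ and $p_*,q_*\in\,]1,\infty]$, runs Leray--Schauder there (the free a~priori bound coming from $L_\infty\hookrightarrow E^{s_*-2}_{p_*,q_*}$), and then lifts via Theorem~\ref{ireg-thm}. That part is fine.

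For the self-adjoint case your route diverges from the paper's, and there is a genuine gap. You propose a two-stage Lyapunov--Schmidt scheme: first solve the range equation for $z=z(w)$ at each fixed $w$, then run Brouwer degree on the residual map $w\mapsto Qg(w+z(w))-Qf$. The problem is the phrase ``depending continuously on $w$'': the range equation is itself solved by Leray--Schauder, which gives existence but neither uniqueness nor any mechanism to select $z(w)$ continuously. Without a continuous $w\mapsto z(w)$ the finite-dimensional map is not even defined as a continuous function, and the Brouwer degree argument cannot start. The usual fixes (contraction mapping, monotonicity, implicit function theorem) are unavailable here since $g$ is only assumed bounded with bounded derivatives.

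The paper bypasses this by treating the two components \emph{jointly}: it sets up a single compact map on the product $Q^c(E^{s_*}_{p_*,q_*;T})\times\ker A_T$, with the homotopy $\lambda$ entering both components (in particular through the term $\lambda w$ in the kernel equation). The $v$-component is bounded exactly as you say; if the $w$-component were unbounded along the homotopy one extracts sequences $(v_k)$ bounded and $t_kw_k$ with $\|w_k\|_{L_\infty}=1$, $t_k\to\infty$, and a one-line computation from the kernel equation shows the Landesman--Lazer expression has a \emph{strict} sign for every $k$, contradicting (II) (or (III), after flipping the sign of the $\lambda Q$ term). This contradiction argument uses (II)/(III) exactly in the sequential form in which they are stated, and never needs a map $w\mapsto z(w)$. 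Your degree translation can in fact be completed \emph{once} continuity of $z(\cdot)$ is in hand, but that is precisely what you have not established.
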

This generalises the $L_2$-versions of (III) of Robinson and
Landesman \cite{RoLa95} and the $B^{s}_{p,q}$- and $F^{s}_{p,q}$-version of
(II) in \cite{RoRu96} to the case with 
$(s,p,q)$ in the full parameter domain $\Dm(A_T+(\cdot))$ as defined here. See
Remarks~\ref{orignorm-rem}--\ref{RoRu-rem} below for specific comparisons.

Simple cases of Theorem~\ref{solv-thm} are given in
Examples~\ref{data-ex}--\ref{ireg-ex} above. In addition, note that we can
have, say, $\mlap_{\gamma_0}-\lambda$ where $\lambda$ is any eigenvalue.

One-dimensional examples may be
found in e.g.~\cite{RoLa95}; they also elucidate the
connection to other and earlier conditions, mainly formulated in terms of
$g(t)$'s properties and without reference to sequences. For the $B^{s}_{p,q}$
and $F^{s}_{p,q}$ conditions there is a similar treatment in
\cite{RoRu96}. Drawing on this, we do not give further examples on (II) and
(III). 

Concerning the proof we use when $s<2$ that 
$L_\infty(\Omega)\hookrightarrow E^{s-2}_{p,q}$ to obtain
Theorem~\ref{solv-thm} from the Leray--Schauder theorem. 
The remaining cases are reduced to this by a crucial
application of Theorem~\ref{ireg-thm}, cf.\ Section~\ref{ex-sect}.

\begin{rem}   \label{orignorm-rem}
In (II) and (III) it suffices when $s<2$ and $1<p\le\infty$ to consider
sequences $(v_k)$ that are merely bounded in $E^{s}_{p,q;T}$ \emph{itself}. Our
proof gives this directly, but the $L_{t-0}$-condition is convenient to state.
\end{rem}

\begin{rem}
  \label{RoLa-rem}
Formally the requirements in (II) and (III) are weaker than those in e.g.\
\cite{RoLa95} in the sense that the inequalities should hold for one $k$ in
$\N$, and not for all $k$ eventually. However, it is easy to infer that this
must be the case when (II) or (III) holds.
 
Seemingly (II) and (III) have not been considered simultaneously before.
\end{rem}
\begin{rem}
  \label{RoRu-rem}
Extension to $B^{s}_{p,q}$ and $F^{s}_{p,q}$ of the conditions in
\cite{RoLa95} has been done by Robinson and Runst \cite{RoRu96}, but only for
$s>\fracnp$. Conditions (II) and (III) are also more general in other
respects. Most importantly,
we have removed the additional assumption that $f\in B^{t}_{\infty,\infty}$
for $t>-1$ when $T$ has class $2$. Secondly, (II) and (III) may by
Remark~\ref{orignorm-rem} in some cases refer to the
$E^s_{p,q}$-norms (implying their $L_\infty$-conditions when $s>\fracnp$); 
thirdly $(v_k)$ is assumed bounded, so that it is unnecessary to consider the
case when their norms tend slower to infinity than $(t_k)$. 
\end{rem}

\subsection{Notation}  \label{notation-ssect}
For real numbers $a$ the convention $a_\pm=\max(0,\pm a)$ is used. 
When $A\subset\Rn$  is open, $L_p(A)$ denotes the classes of 
functions whose $p^{th}$ power is integrable for $0<p<\infty$, while
$p=\infty$ gives the essentially bounded ones; $L_1^{\op{loc}}(A)$
stands for the locally integrable functions.

When $\Omega\subset\Rn$ is open, $C^\infty(\Omega)$ denotes the
infinitely differentiable functions; $\Cb^\infty(\Rn)$ the subspace of
$C^\infty(\Rn)$ for which derivatives of any order are bounded. $\cal
S(\Rn)$ is the Schwartz space of rapidly decreasing functions; $\cal
S'(\Rn)$ its dual of tempered distibutions. The Fourier
transformation $\cal F$ is extended to $\cal S'$ by duality. 
The Sobolev--Slobodetski\u\i\ spaces $W^s_p$ are defined by
derivatives and differences thereof for $s>0$ and $1<p<\infty$;
the Bessel potential spaces $H^s_p=\cal F^{-1}(1+|\xi|^2)^{-s/2}\cal F(L_p)$
for $s\in\R$, $1<p<\infty$. Besov and Triebel--Lizorkin spaces are
written $B^{s}_{p,q}(\Rn)$ and $F^{s}_{p,q}(\Rn)$ with $s\in\R$ while
$p$,~$q\in\,]0,\infty]$, except that $p<\infty$ is required for~$F^{s}_{p,q}$.

The subspaces of \emph{real-valued} elements are all denoted by the same
symbols as the complex ones, for throughout we only consider the former
versions.  

For open sets $\Omega\subset\Rn$ the corresponding spaces are defined
by restriction, that is $B^{s}_{p,q}(\overline{\Omega})=
r_\Omega B^{s}_{p,q}(\Rn)$ etc. Hereby $r_\Omega$ is
the transpose of $e_\Omega$, the extension by $0$ outside of $\Omega$.
Spaces over $\Omega$ are given the infimum (quasi-) norm. Similarly
for $C^\infty(\overline{\Omega})$. For the
testfunction space $C^\infty_0(\Omega)$ the dual is written $\cal
D'(\Omega)$, and $\dual{u}{\varphi}=u(\varphi)$ for $u\in\cal D'$ and
$\varphi\in C_0^\infty$. The spaces over $\Gamma=\partial\Omega$ are
defined by means of local coordinates.

\subsection{The spaces} \label{spaces-ssect}
In the following $\Rn$ is suppressed as the underlying set.

First a partition of unity, $1=\sum_{j=0}^\infty\Phi_j$, is 
constructed: From $\Psi\in C^\infty(\R)$, such
that $\Psi(t)=1$ for $0\le t\le\tfrac{11}{10}$ and $\Psi(t)=0$
for $\tfrac{13}{10}\le t$, the functions
$ \Psi_j(\xi)=\Psi(2^{-j}|\xi|)$, with $\Psi_j\equiv0$ for $j<0$,
are used to define
 \begin{equation}
  \Phi_j(\xi)=\Psi_j(\xi)-\Psi_{j-1}(\xi),
  \quad\text{ for}\quad j\in\Z.
  \label{1.15} 
 \end{equation}
Secondly there is then a decomposition, with (weak) convergence 
in $\cal S'$,
 \begin{equation}
 u=\sum_{j=0}^\infty\,\cal F^{-1}(\Phi_j\cal Fu),
 \quad\text{ for every}\quad u\in\cal S'.
 \label{1.16} 
 \end{equation}
Now the Besov space $B^{s}_{p,q}(\Rn)$ and the Triebel--Lizorkin space
$F^{s}_{p,q}(\Rn)$  with \emph{smoothness 
index $s\in\R$, integral-exponent $p\in\left]0,\infty\right]$ 
{\rm and} sum-exponent} $q\in\left]0,\infty\right]$ is defined as
 \begin{align}
 B^{s}_{p,q}&=\bigl\{\,u\in\cal S'\bigm|
 \Norm{ \{2^{sj} \norm{\cal F^{-1}\Phi_j\cal Fu}{L_p} \}_{j=0}^\infty
       }{\ell_q} <\infty\,\bigr\},
 \label{1.17}
\\ 
 F^{s}_{p,q}&=\bigl\{\,u\in\cal S'\bigm|
 \Norm{ \norm{ \{2^{sj}\cal F^{-1}\Phi_j\cal
 Fu\}_{j=0}^\infty}{\ell_q} (\cdot)}{L_p} <\infty\,\bigr\},
 \label{1.18}
 \end{align}
respectively. For the history of these spaces we refer to Triebel's books
\cite{T2,T3}.  Identifications with other spaces are found in 
Section~\ref{intr-sect}.

In the rest of this subsection the explicit mention of the restriction 
$p<\infty$ concerning the~Triebel--Lizorkin spaces is omitted. E.g., 
\eqref{1.19'} below should be read with $p\in\,]0,\infty]$ in 
the $B^{s}_{p,q}$ part and with $p\in\,]0,\infty[\,$ in 
the $F^{s}_{p,q}$ part. 

The $B^{s}_{p,q}$ and $F^{s}_{p,q}$ are complete, for $p$ and $q\ge1$
Banach spaces, and $\cal S\hookrightarrow E^{s}_{p,q} \hookrightarrow\cal
S'$ are continuous. Moreover, $\cal S$ is dense in $E^{s}_{p,q}$ 
when both $p$ and $q$ are finite, and $C^\infty$ is so in 
$B^{s}_{\infty,q}$ for $q<\infty$. 

The definitions imply that $B^{s}_{p,p}=F^{s}_{p,p}$,
and they give the existence of \emph{simple} embeddings for 
$s\in\R,\,\,p\in\left]0,\infty\right]$ 
and $o$ and $q\in\left]0,\infty\right]$, 
  \begin{gather}
  E^{s}_{p,q}\hookrightarrow E^{s}_{p,o}
                              \quad\text{when $q\le o$}, \qquad
  E^{s}_{p,q}\hookrightarrow E^{s-\varepsilon}_{p,o},
\quad\text{$ \varepsilon>0,$}
   \label{1.19'} \\ 
  B^{s}_{p,\min(p,q)}\hookrightarrow F^{s}_{p,q}
     \hookrightarrow B^{s}_{p,\max(p,q)}.
 \label{1.19''}
 \end{gather}
There are Sobolev embeddings if $s-\fracc{n}p\ge
t-\fracc{n}r$ and $r>p$, more specifically
 \begin{gather}
 B^{s}_{p,q}\hookrightarrow B^{t}_{r,o},
 \quad\text{ provided $q\le o$ when $ s-\fracc{n}p
 =t-\fracc{n}r$},
 \label{1.20} \\
 F^{s}_{p,q}\hookrightarrow F^{t}_{r,o},
 \quad\text{ for any $o$ and $q\in\,]0,\infty].$}
 \label{1.20'}
 \end{gather}
Furthermore, Sobolev embeddings also exist between the two scales,
in fact under the assumptions $\infty\ge p_1>p>p_0>0$ and 
$s_0-\fracc{n}{p_0}=s-\fracc{n}p=s_1-\fracc{n}{p_1}$,
 \begin{equation}
  B^{s_0}_{p_0,q_0}\hookrightarrow F^{s}_{p,q}
  \hookrightarrow B^{s_1}_{p_1,q_1},\quad
  \text{for $q_0\le p$ and $p\le q_1$.}  
  \label{1.21} 
 \end{equation}
When $\Cb$ denotes the bounded uniformly continuous functions on $\Rn$, then
 \begin{equation}
  \begin{gathered}
  B^{s}_{p,q}\hookrightarrow B^{0}_{\infty, 1}\hookrightarrow
  \Cb\hookrightarrow L_\infty\hookrightarrow B^{0}_{\infty,\infty}
  ,\\
  \text{if $s>\fracc{n}p$, or if $s=\fracc{n}p$ and $q\le1$};
  \end{gathered}
  \label{1.22} 
 \end{equation} 
whereas
 \begin{equation}
  \begin{gathered}
  F^{s}_{p,q}\hookrightarrow B^{0}_{\infty,1}\hookrightarrow
  \Cb\hookrightarrow L_\infty,\\
  \text{if $s>\fracc{n}p$, or if $s=\fracc{n}p$ and $p\le1$}.
  \end{gathered}
  \label{1.23}
 \end{equation}
Moreover, when $n(\fracc{1}p-1)_+\le s<\fracc{n}p$
one has, with $\tfrac{n}{t}=\fracc{n}p-s$, that
 \begin{equation}
  F^{s}_{p,q}\hookrightarrow\bigcap\{\, L_r\mid
  p\le r\le t\,\};
 \label{1.24}
 \end{equation}
for $s=0$ this is provided that $q\le 1$ for $p=1$ and that $q\le
2$ for $p>1$. Correspondingly
 \begin{equation}
 B^{s}_{p,q}\hookrightarrow\bigcap\{\, L_r\mid
 p\le r< t\,\},
 \label{1.25}
 \end{equation}
where $r=t$ can be included in general when $q\le t$. For $s=0$ one has
$B^{s}_{p,q}\hookrightarrow L_p$ for $q\le\min(2,p)$ and $p\ge1$.

\bigskip

For an open set $\Omega\subset\Rn$ the space 
$E^{s}_{p,q}(\overline{\Omega})$ is defined by restriction,
 \begin{gather}
 E^{s}_{p,q}(\overline{\Omega})=r_\Omega E^{s}_{p,q}=
 \{\,u\in\cal D'(\Omega)\mid \exists v\in E^{s}_{p,q}\colon 
 r_\Omega v=u\,\} 
 \label{1.34} \\
 \norm{u}{E^{s}_{p,q}(\overline{\Omega})}=\inf \bigl\{\,
 \norm{v}{E^{s}_{p,q}}\bigm| r_\Omega v=u\,\bigr\}.
 \label{1.34'}
 \end{gather}
By the definitions all the~embeddings in \eqref{1.19'}--\eqref{1.25} carry
over to the scales over $\Omega$. When $\infty\ge p\ge r>0$ the inclusion
$L_p(\Omega)\hookrightarrow L_r(\Omega)$ gives
 \begin{equation}
 B^{s}_{p,q}(\overline{\Omega})\hookrightarrow 
 B^{s}_{r,q}(\overline{\Omega}), \qquad
 F^{s}_{p,q}(\overline{\Omega})\hookrightarrow 
 F^{s}_{r,q}(\overline{\Omega}),
 \label{1.38}
 \end{equation} 
for $\Omega$, say smooth and bounded; cf.~\cite{JJ94mlt} for a proof (in
full generality). 

\begin{prop}  \label{tnsr-prop}
For $s<0$ and $p,q\in\,]0,\infty]$ there exists $c<\infty$ such that 
\begin{align}
  \norm{u\otimes v}{B^{s}_{p,q}(\R^{n+m})}&\le
  c\norm{u}{B^{s}_{p,q}(\Rn)}\norm{v}{L_p(\R^m)}, 
  \label{tnsr-1}
 \\
  \norm{u\otimes v}{B^{s+t}_{p,q}(\R^{n+m})}&\le
  c\norm{u}{B^{s}_{p,q_0}(\Rn)}\norm{v}{B^{t}_{p,q_1}(\R^m)},
  \label{tnsr-2}
\end{align}
when $p>1$ in \eqref{tnsr-1} and $t<0$ and
$\fracc1q=\fracc1{q_0}+\fracc1{q_1}$ in \eqref{tnsr-2}, respectively. 
\end{prop}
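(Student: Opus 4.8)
The plan is to prove the two tensor‑product estimates \eqref{tnsr-1}--\eqref{tnsr-2} by reducing them to estimates on the Littlewood--Paley pieces, exploiting that the Fourier supports of $u\otimes v$ are controlled by the supports of the factors. The key observation is that $\cal F(u\otimes v)=\cal F u\otimes\cal F v$, so the frequency variable $\zeta=(\xi,\eta)\in\R^{n+m}$ decouples multiplicatively. First I would fix the dyadic partition $1=\sum_{j}\Phi_j$ on $\R^{n+m}$ from \eqref{1.15} and compare $\Phi_j(\xi,\eta)$ with the corresponding one‑variable partitions $\Phi^{(1)}_k$ on $\Rn$ and $\Phi^{(2)}_\ell$ on $\R^m$; since $|\zeta|\sim\max(|\xi|,|\eta|)$, the annulus $\{|\zeta|\sim2^j\}$ meets the product supports only when $\max(k,\ell)$ is comparable to $j$. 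This gives, for each $j$, a control of $\cal F^{-1}\Phi_j\cal F(u\otimes v)$ in terms of a bounded number of terms $(\cal F^{-1}\Phi^{(1)}_k\cal F u)\otimes(\cal F^{-1}\Phi^{(2)}_\ell\cal F v)$ with $|k-j|$ or $|\ell-j|$ bounded.

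For \eqref{tnsr-1}, the idea is that since $s<0$ and $v\in L_p$, the factor $v$ carries \emph{no} smoothness: I would write $v=\sum_\ell\cal F^{-1}\Phi^{(2)}_\ell\cal F v$ but only use that $\norm{\cal F^{-1}\Phi^{(2)}_\ell\cal F v}{L_p}\le c\norm{v}{L_p}$ (a Fourier‑multiplier bound with a factor uniform in $\ell$). Because $s<0$, the low‑frequency part of $v$ (all $\ell\lesssim j$) can be summed against the factor $2^{sj}$ without loss, while the high‑frequency contribution where $\ell\gtrsim j$ is where the negative smoothness of $u$ is crucial: there $2^{sj}\lesssim2^{sk}\cdot(\text{bounded})$ because $s<0$ forces the relevant $k$ to match $j$ from above. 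Applying the $L_p$ norm to the tensor product factors multiplicatively (using $\norm{a\otimes b}{L_p(\R^{n+m})}=\norm a{L_p(\Rn)}\norm b{L_p(\R^m)}$, valid for $0<p\le\infty$) and then taking the $\ell_q$‑norm in $j$ reduces everything to $\norm u{B^{s}_{p,q}(\Rn)}\norm v{L_p}$, with the $p>1$ assumption entering only through the boundedness of the Fourier multipliers $\Phi^{(2)}_\ell$ on $L_p$.

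For \eqref{tnsr-2} the scheme is the same but now both factors contribute smoothness, with $s<0$ and $t<0$. After decoupling, the single scale $2^{(s+t)j}$ must be distributed as $2^{sk}2^{t\ell}$; the two‑sided negativity guarantees $2^{(s+t)j}\lesssim2^{sk}2^{t\ell}$ on the relevant frequency region (up to a uniform constant), so the double sum over $(k,\ell)$ factors. The summation in the sum‑exponent is handled by a discrete Young / Hölder inequality with $\tfrac1q=\tfrac1{q_0}+\tfrac1{q_1}$, which is precisely the stated hypothesis; this yields the product $\norm u{B^{s}_{p,q_0}(\Rn)}\norm v{B^{t}_{p,q_1}(\R^m)}$. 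The main obstacle I anticipate is \textbf{not} the frequency bookkeeping but the careful quantification of the multiplier constants and the summation when $0<p<1$, where $L_p$ is only a quasi‑Banach space and the triangle inequality must be replaced by a $p$‑triangle inequality; one must verify that the Fourier‑multiplier estimates (of Mikhlin--Hörmander type, here for the very simple symbols $\Phi_j$) still hold with constants uniform in $j$ in that range, and that the superposition of the finitely many off‑diagonal dyadic terms can be absorbed without a divergent constant. This is where the condition $s<0$ (resp.\ $s,t<0$) is genuinely used, to prevent the low‑frequency build‑up from destroying summability.
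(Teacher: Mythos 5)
Your overall route --- Littlewood--Paley decomposition of $u\otimes v$, the support observation $|\zeta|\sim\max(|\xi|,|\eta|)$, uniform $L_p$-multiplier bounds (where $p>1$ enters), and the geometric gain from $s<0$, $t<0$ --- is exactly what the paper has in mind: its proof is the one-line remark that the statement may be proved "using Littlewood--Paley decompositions, in the same manner as" \cite[Prop.~2.5]{JJ96ell}. But two steps of your execution fail as written. First, it is \emph{not} true that $\cal F^{-1}(\Phi_j\cal F(u\otimes v))$ is controlled by a bounded number of products $u_k\otimes v_\ell$ (where $u_k=\cal F^{-1}(\Phi^{(1)}_k\cal Fu)$, $v_\ell=\cal F^{-1}(\Phi^{(2)}_\ell\cal Fv)$): the support condition pins only $\max(k,\ell)$ to an interval $[j-c,j+c]$, while the smaller index runs over all of $\{0,\dots,j+c\}$, so the number of contributing pairs grows linearly in $j$. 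Consequently your plan for \eqref{tnsr-1} --- decompose $v$ and use only the termwise bounds $\|v_\ell\|_{L_p}\le c\|v\|_{L_p}$ --- produces at scale $j$, from the pairs with $k\approx j$ and $\ell\le j+c$, a bound of the form $j\,2^{sj}\|u_j\|_{L_p}\|v\|_{L_p}$, and multiplication by $j$ is unbounded on every $\ell_q$; negativity of $s$ cannot rescue this, because the factor $2^{sj}$ has already been spent forming $2^{sj}\|u_j\|_{L_p}$. The repair is standard but different from what you wrote: either do not decompose $v$ at all, so that $\cal F^{-1}(\Phi_j\cal F(u\otimes v))=\sum_{k\le j+c}\cal F^{-1}(\Phi_j\cal F(u_k\otimes v))$ with each term bounded by $C\|u_k\|_{L_p(\Rn)}\|v\|_{L_p(\R^m)}$ via the uniform multiplier bound (valid for $p\ge1$ since $\|\cal F^{-1}\Phi_j\|_{L_1}$ is independent of $j$), or re-sum the low frequencies of $v$ into a partial-sum operator, whose uniform $L_p$-bound is a statement your declared toolkit ("only use $\|v_\ell\|_{L_p}\le c\|v\|_{L_p}$") does not contain.

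Second, in both estimates you discard the off-diagonal geometric decay too early. The inequalities $2^{sj}\lesssim2^{sk}$ for $k\le j+c$, and $2^{(s+t)j}\lesssim2^{sk}2^{t\ell}$ for $\max(k,\ell)\approx j$, are true but insufficient: after applying them you are left with $\sum_{k\le j+c}a_k$ (where $a_k=2^{sk}\|u_k\|_{L_p}$), respectively with a "factored" double sum $(\sum_k a_k)(\sum_\ell b_\ell)$, and such $\ell_1$-type expressions are not controlled by $\|a\|_{\ell_q}$, respectively $\|a\|_{\ell_{q_0}}\|b\|_{\ell_{q_1}}$ --- already $a_k=\delta_{k0}$ has partial sums identically $1$, which lie in no $\ell_q$ with $q<\infty$. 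What closes the argument is to \emph{keep} the factors $2^{s(j-k)}$, respectively $2^{-\min(|s|,|t|)|k-\ell|}$, so that the sum over scales is a convolution of $(a_k)$ (and $(b_\ell)$) with a fixed geometric $\ell_1$-sequence, and to finish with Young's inequality $\ell_1*\ell_q\subset\ell_q$ (for $q<1$ use $\ell_q*\ell_q\subset\ell_q$), combined with H\"older for $\fracc1q=\fracc1{q_0}+\fracc1{q_1}$ in the case of \eqref{tnsr-2}. This is presumably what you intend by "discrete Young/H\"older", but it is incompatible with the lossy reductions just quoted. Note also that, contrary to your closing assessment, the genuinely delicate point is precisely this frequency bookkeeping; the $p<1$ quasi-norm issues in \eqref{tnsr-2} are routine, being handled by the Fourier-support multiplier estimates and the $\min(1,p)$-triangle inequality exactly as you say.
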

\begin{proof}
Using Littlewood--Paley decompositions, this may be proved in the same manner
as \cite[Prop.~2.5]{JJ96ell} (where $v=\delta_0$ was treated).
\end{proof}

\begin{exmp}  \label{tnsr1-ex}
Precisely when $1<p\le\infty$ does 
\begin{equation}
  \op{pv}(\tfrac{1}{x})\in B^{\fracpi-1}_{p,\infty}(\R).
  \label{tnsr-3}
\end{equation}
Indeed, since $\op{pv}(\tfrac{1}{x})=i\cal FH-i\pi\delta_0$, where $H$ is the
Heaviside function it suffices to consider $i\cal F H$. Since $H$ is
homogeneous of degree $0$, $\cal FH$ is in $B^{\fracpi-1}_{p,q}$ if and only
if $\cal F^{-1}(\Phi_0H(-\cdot))$ is in $L_p$. But since 
\begin{equation}
  -x\cal F^{-1}(\Phi_0H(-\cdot))-\tfrac{i}{2\pi}=
  \cal F^{-1}(H(-\cdot)D_\xi\Phi_0)\in L_\infty(\R),
  \label{tnsr-4}
\end{equation}
and $\cal F^{-1}(\Phi_0H(-\cdot))$ is in $\Cb(\R)$, it is in $L_p$ for
$1<p\le\infty$.  
\end{exmp}

\begin{exmp}  \label{tnsr2-ex}
By the proposition and Example~\ref{tnsr1-ex}, with $x=(x',x_n)$ in $\Rn$ for
$n\ge2$, one has for $1<p\le\infty$
\begin{equation}
  r_\Omega(1(x')\otimes\op{pv}\tfrac{1}{x_n}) \in
B^{\fracpi-1}_{p,\infty}(\overline{\Omega}),
  \label{tnsr-5}
\end{equation}
for tensoring instead with $1_B$, the characteristic function of a 
bounded set with $\Omega\subset B\times\R$, which is in $L_p(\R^{n-1})$, 
yields the same restriction to $\Omega$. 
\end{exmp}

\section{Composition Estimates}   \label{cmp-sect}
Here we prove Theorem~\ref{dreg-thm} and substantiate the remarks made after
it. 

\subsection{Proof of Theorem~\ref{dreg-thm}}   \label{pf-ssect}
That $T$ is bounded as in \eqref{M2} when
(i) holds is well known. Concerning the standard traces $\gamma_0$ and
$\gamma_1$ one can consult \cite[Thm.~3.3.3]{T2}, and in general this
is combined with the fact that $S_0$ and $S_1$ has order $d$ and
$d-1$, respectively, in both $B^{s}_{p,q}(\Gamma)$ and $F^{s}_{p,q}(\Gamma)$.

Secondly, it suffices to show \eqref{sgm-cnd} for $g(\cdot)$, for the
fact in \eqref{M3} that $E^s_{p,q}$ is sent into $E^\sigma_{p,q}$ for
some $\sigma>s-2$ is a consequence of this. Indeed, given the property
in \eqref{sgm-cnd} it follows at once that \eqref{M3} holds if
$s>\fracnp$ or if $0<s<1$ does so: for any $\varepsilon>0$ one
can take $\sigma=s-\varepsilon$ and use embeddings, e.g. 
\begin{equation}
  B^{s}_{p,q}(\overline{\Omega})\hookrightarrow 
  F^{s-\frac{\varepsilon}{k}}_{p,\infty}(\overline{\Omega})
  \xrightarrow{g(\cdot)}
  F^{s-\frac{\varepsilon}{k}}_{p,\infty}(\overline{\Omega})  
  \hookrightarrow B^{s-\varepsilon}_{p,q}(\overline{\Omega})
  \label{C1}
\end{equation}
when $k$ is so big that $s-\frac{\varepsilon}{k}>\fracnp$
and $s-\frac{\varepsilon}{k}>\max(0,\fracnp-n,\fracnp-\frac{p}{1-p})$.
For $s=1$, or in the $F$-case even for $s=\fracnp$, a 
similar argument applies.

For $1<s<\fracnp$ we consider for $p$ fixed $s-\sigma(s,p)$, that is
\begin{equation}
  d(s)=s-\frac{\fracnp}{\fracnp-s+1}=\frac{(s-1)(\fracnp-s)}{\fracnp-s+1},
  \label{C2} 
\end{equation}
which measures the loss of smoothness under $g(\cdot)$. (There exists for
$\varepsilon>0$ a $u_\varepsilon\in 
E^s_{p,q}$ such that $g(u_\varepsilon)\notin E^{\sigma(s,p)+
\varepsilon}_{p,q}$, cf.~Remark~\ref{Dahlberg-rem}.)  Since 
\begin{equation}
  d(s)=2 \iff s^2-(\fracnp+3)s+3\fracnp+2=0,
  \label{C3}
\end{equation}
where the discriminant $D=(\fracnp-3)^2 -8$, it is found that $d(s)<2$ holds
\begin{align}
  \text{if}\quad s&>\tfrac{1}{2}(\fracnp+3+\sqrt{(\fracnp-3)^2-8}\,)
  \label{C4}  \\
  \text{or if}\quad s&<\tfrac{1}{2}(\fracnp+3-\sqrt{(\fracnp-3)^2-8}\,);
  \label{C5}
\end{align}
this is condition (iii) in the theorem, for $D\ge0$ holds when
$\fracnp\ge3+\sqrt 8$. Observe that
$(\sqrt{\fracnp}-1)^2=\max\{\,d(s)\mid 1<s<\fracnp\,\}$, and that this
equals $2$ for $\fracnp=3+\sqrt8$ since $D=0$ then. If
$\fracnp<3+\sqrt8$, then $(\sqrt{\fracnp}-1)^2 <2$.

For a given $(s,p,q)$ with $1<s<\fracnp$ and (iii) satisfied we can
now take $\varepsilon>0$ so that $\sigma(s,p)-\varepsilon>s-2$ and
obtain 
\begin{equation}
  F^{s}_{p,q}(\overline{\Omega})\xrightarrow{g(\cdot)} 
  F^{\sigma(s,p)}_{p,\infty}(\overline{\Omega})\hookrightarrow
  F^{\sigma(s,p)-\varepsilon}_{p,q}(\overline{\Omega}),
  \label{C6}
\end{equation}
which gives \eqref{M3} in this case. Moreover, the fact that
(ii),(iii) and $1<s<\fracnp$ specify an open set of parameters
$(s,p,q)$ together with the continuity of $\sigma(\cdot,p)$ gives an
$\eta>0$ such that $\sigma(s-\eta,p)>s-2$, and then
\begin{equation}
  B^{s}_{p,q}(\overline{\Omega})\hookrightarrow
  F^{s-\eta}_{p,\infty}(\overline{\Omega})\xrightarrow{g(\cdot)}
  F^{\sigma(s-\eta,p)}_{p,\infty}(\overline{\Omega})
  \hookrightarrow B^{\sigma}_{p,q}(\overline{\Omega})
  \label{C7}
\end{equation}
holds for any $\sigma<\sigma(s-\eta,p)$.

Finally, when $s=\fracnp$ in the $B$-case an argument similar to
\eqref{C7}, but with $\sigma(s-\eta,p)>s-\varepsilon$, works because
$\lim_{s\to\fracci np_-} \sigma(s,p)=\fracnp=s$. The statement on
$\tilde\sigma$ follows analogously if the effects of (iii) are
disregarded, for in \eqref{C1} ff. any $\varepsilon>0$ and in
\eqref{C7} ff. any $\sigma<\sigma(s,p)$ may be obtained. Similarly
$\sigma=\sigma(s,p)-\varepsilon$ is always possible.

\bigskip

It remains to show \eqref{sgm-cnd}. Here we draw on the
literature, where $\Omega=\Rn$ has been considered by many. 
On $\Rn$ the condition $g(0)=0$ is posed in order to have
$g(0)\in L_p$ also for $p<\infty$, so strictly speaking we should
replace $g(\cdot)$ by $g(\cdot)-g(0)$; this is harmless because $g(0)$
belongs to $\cap_{s,p,q} B^{s}_{p,q}(\overline{\Omega})$. 

Once boundedness has been established on $\Rn$
through an inequality like 
\begin{equation}
  \norm{g(u)}{F^{\sigma(s,p)}_{p,\infty}}\le c
  \norm{u}{F^{s}_{p,\infty}}(1+\norm{u}{F^{s}_{p,\infty}}^{\mu-1})
  \label{C8}
\end{equation}
this carries over to $\Omega$  by restriction: if $r_\Omega v=u$
for $v\in F^{s}_{p,\infty}(\Rn)$, then $g(v)\in
F^{\sigma(s,p)}_{p,\infty}(\Rn)$ restricts to $g(u)$. 
Thus it suffices to consider $\Omega=\Rn$. 

For $s>\fracnp$ it was shown in \cite{Run86} that for every
real-valued $u\in F^{s}_{p,q}(\Rn)$,
\begin{equation}
  \norm{g(u)}{F^{s}_{p,q}}\le
  c\norm{u}{F^{s}_{p,q}}(1+\norm{u}{F^{s}_{p,q}}^{\mu-1}), 
  \label{C9}
\end{equation}
when $\mu>\max(1,s)$, cf.~Theorem~5.4.2 there. Here the general
assumption that $g^{(j)}\in L_\infty(\R)$ for every $j\in\N_0$ is used
to obtain $c$ independent of $u$.

When $(\fracnp-n)_+<s<1$ the estimate in \eqref{C8} is, with
$\sigma(s,p)=s$ and $\mu=1$, a well-known easy consequence of the
characterisation of $F^{s}_{p,q}$ by first order differences,
cf.~\cite[Thm.~3.5.3]{T3} and the estimate
\begin{equation}
  |g(u(x+h))-g(u(x))|\le \norm{g'}{L_\infty}\cdot|u(x+h)-u(x)|.
  \label{C10}
\end{equation}

The cases with $1<s<\fracnp$ are covered by \cite[Lemma~3]{Sic89},
even with a sharper result in Theorem~1 there when
$s>1+(\fracnp-n)_+$. In fact this lemma yields \eqref{C8} for
$\sigma(s,p)=\frac{\fracnp}{\fracnp-s+1}$ and $\mu=\sigma(s,p)$,
provided that $1<s<\fracnp$ and $\sigma(s,p)>(\fracnp-n)_+$ hold. By
definition $\sigma(s,p)>1$ for $s>1$, so this is trivially true for
$1\le p<\infty$; for $p\le1$ the assumption $s<\fracnp$ gives that 
\begin{equation}
  \sigma(s,p)>\fracnp-n\iff s>\frac{(\fracnp)^2-n\fracnp-n}{\fracnp-n}
  \iff s>\fracnp-\tfrac{p}{1-p},
  \label{C11}
\end{equation}
so the second line of (ii) is found from the requirement
$\sigma(s,p)>(\fracnp-n)_+$. 

Finally, for $s=1$ we reduce to the case with $s<1$ by an arbitrarily small
loss of smoothness; for $s=\fracnp$ a reduction to $1<s<\fracnp$ works because
$\lim_{s\to\fracci np_-}\sigma(s,p)=\fracnp=s$. The proof of
Theorem~\ref{dreg-thm} is complete.

\bigskip

We include a few observations on the curve determined by
\eqref{C3} for $\fracnp>0$. For the auxiliary function
$h_1(t)=\frac12(t+3+\sqrt{(t-3)^2-8})$,
\begin{equation}
  \begin{split}
    h_1(t)-t&=\tfrac12(t-3)(\sqrt{1-8(t-3)^{-2}}-1)
    \\
            &=-2(t-3)^{-1}+\cal O((t-3)^{-3})\to 0_-\quad\text{for
         $t\to\infty$}, 
  \end{split}
  \label{C13}
\end{equation}
whereas $h_2(t)=\frac12(t+3-\sqrt{(t-3)^{2}-8})$ satisfies 
\begin{equation}
  h_2(t)-3=\tfrac12(t-3)(1-\sqrt{1-8(t-3)^{-2}})\to 0_+\quad\text{for
  $t\to\infty$}. 
  \label{C14}
\end{equation}
Thus $s=\fracnp$ and $s=3$
are the asymptotes as claimed. The curve itself is a branch of a
hyperbola since the equation in \eqref{C3} may be written
\begin{multline}
    0=(s-3)^2 -(\fracnp-3)(s-3)+2
  \\
  = \begin{pmatrix}\fracnp-3&
  s-3\end{pmatrix} \begin{pmatrix}0&-\tfrac12\\ -\tfrac12 &
  1\end{pmatrix} \begin{pmatrix}\fracnp-3\\ s-3\end{pmatrix}+2,
  \label{C16}
\end{multline}
where the matrix is symmetric and indefinit as the determinant is $-\frac14$.

\subsection{A lemma on continuity}   \label{cont-ssect}
The boundedness obtained for $g(\cdot)$ above means that every bounded
set of $E^s_{p,q}$ is mapped into a bounded set in $E^\sigma_{p,q}$.
Although $g(\cdot)$ is non-linear, this boundedness does imply a norm
continuity if one can afford to loose a little smoothness.

For the reader's convenience we include the next lemma, which is used
in Section~\ref{ex-sect} below; it extends
\cite[3.1]{Sic92} and simplifies \cite[Lem.~5.5.2]{RuSi96}:

\begin{lem}  \label{cont-lem}
When $\Omega$ is as above, and $g\in C^\infty(\R)$ with $g'\in
L_\infty(\R)$, then boundedness, for some $s>(\fracnp-n)_+$,
$0<p\le\infty$ and some $\sigma\in\R$, of
\begin{equation}
  g(\cdot)\colon E^s_{p,q}\to E^\sigma_{p,q}
  \label{C31}
\end{equation}
implies norm continuity of
\begin{equation}
  g(\cdot)\colon E^s_{p,q}\to E^{\sigma-\varepsilon}_{p,q}
  \quad\text{for each $\varepsilon>0$}.
  \label{C32}
\end{equation}
\end{lem}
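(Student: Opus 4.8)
The plan is to use that each $E^s_{p,q}(\overline{\Omega})$ is a metrizable quasi-Banach space, so that norm continuity of \eqref{C32} is the same as sequential continuity. Thus I take $u_k\to u$ in $E^s_{p,q}$ and must show that $v_k:=g(u_k)-g(u)$ tends to $0$ in $E^{\sigma-\varepsilon}_{p,q}$ for every $\varepsilon>0$. The governing idea is to bracket $v_k$ from two sides: the Lipschitz behaviour of $g$ will give convergence $v_k\to0$ in a space of \emph{low} smoothness, while the boundedness assumption \eqref{C31} will keep $\{v_k\}$ \emph{bounded} in the high-smoothness space $E^\sigma_{p,q}$; a multiplicative interpolation inequality in the smoothness index then forces convergence in the intermediate space $E^{\sigma-\varepsilon}_{p,q}$.

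First I would treat the low end. Since $g'\in L_\infty(\R)$, the mean value theorem gives the pointwise estimate $|g(u_k(x))-g(u(x))|\le\norm{g'}{L_\infty}\,|u_k(x)-u(x)|$, whence $\norm{v_k}{L_p}\le\norm{g'}{L_\infty}\,\norm{u_k-u}{L_p}$ for every $0<p\le\infty$. Because $s>(\fracnp-n)_+\ge n(\fracp-1)_+$, the embeddings \eqref{1.22}--\eqref{1.25} yield $E^s_{p,q}(\overline{\Omega})\hookrightarrow L_p(\Omega)$, so $u_k\to u$ in $L_p$ and hence $\norm{v_k}{L_p}\to0$. For $1\le p\le\infty$ this passes to a space of negative smoothness through $L_p\hookrightarrow B^{0}_{p,\infty}$ followed by the simple embeddings \eqref{1.19'}, giving $\norm{v_k}{E^{s_0}_{p,q}}\to0$ with any prescribed $s_0<\sigma-\varepsilon$ at our disposal.

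Next the high end and the interpolation. As $u_k\to u$, the sequence $\{u_k\}$ is bounded in $E^s_{p,q}$, so \eqref{C31} makes $\{g(u_k)\}$, and hence $\{v_k\}$, bounded in $E^\sigma_{p,q}$, say by $M$. Writing $\sigma-\varepsilon=(1-\theta)s_0+\theta\sigma$ with $\theta=1-\varepsilon/(\sigma-s_0)\in\,]0,1[\,$, the Littlewood--Paley definitions \eqref{1.17}--\eqref{1.18} give, via H\"older's inequality in the sum-exponent $\ell_q$ (and, in the $F$-case, also in $L_p$) combined with \eqref{1.19'}, the inequality
\[
  \norm{v}{E^{\sigma-\varepsilon}_{p,q}}\le c\,\norm{v}{E^{s_0}_{p,q}}^{1-\theta}\,\norm{v}{E^{\sigma}_{p,q}}^{\theta},
\]
which transfers from $\Rn$ to $\overline{\Omega}$ by the restriction definition \eqref{1.34}--\eqref{1.34'} together with a universal extension operator for $\overline{\Omega}$. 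Inserting $v=v_k$ then yields $\norm{v_k}{E^{\sigma-\varepsilon}_{p,q}}\le cM^{\theta}\norm{v_k}{E^{s_0}_{p,q}}^{1-\theta}\to0$, which is the assertion.

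The hard part will be the low-order convergence when $p<1$: there $L_p(\Omega)$ is not even contained in $\cal S'$, so the embedding $L_p\hookrightarrow E^{s_0}_{p,q}$ used above is no longer at hand, and this is the one genuine obstacle. I would circumvent it by not leaving the scale: for $n(\fracp-1)_+<s_1<\min(1,s)$ one estimates the first-difference seminorm of $v_k$ in $F^{s_1}_{p,q}$ (difference characterisation, cf.~\cite[Thm.~3.5.3]{T3}) by interpolating the two Lipschitz bounds $|v_k(x+h)-v_k(x)|\le\norm{g'}{L_\infty}(|u_k(x+h)-u_k(x)|+|u(x+h)-u(x)|)$ and $|v_k(x+h)-v_k(x)|\le\norm{g'}{L_\infty}(|(u_k-u)(x+h)|+|(u_k-u)(x)|)$ through $\min\{\cdot\}\le(\cdot)^{1-\theta}(\cdot)^{\theta}$. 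This produces smallness of $v_k$ in a space $E^{s_1}_{p,q}$ lying strictly below $E^\sigma_{p,q}$, which replaces $E^{s_0}_{p,q}$ in the interpolation step (or already gives the conclusion by a simple embedding when $s_1\ge\sigma-\varepsilon$); for the ranges where only a higher-order difference norm is available the required low-order estimate is instead supplied by the composition results of Sickel \cite{Sic89,Sic92}.
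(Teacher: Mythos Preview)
Your overall strategy coincides with the paper's: low-order convergence via the Lipschitz property of $g$, high-order boundedness from \eqref{C31}, and a multiplicative interpolation inequality to conclude. For $p\ge1$ your argument is essentially the paper's, only phrased through a H\"older inequality on the Littlewood--Paley side rather than through the real interpolation formula $B^{\sigma-\varepsilon}_{p,q}=(B^\sigma_{p,q},B^t_{p,q})_{\theta,q}$.

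The place where you diverge, and where there is a genuine gap, is the case $p<1$. Your plan to control $v_k$ in $F^{s_1}_{p,q}$ by mixing the two pointwise Lipschitz bounds via $\min\le(\cdot)^{1-\theta}(\cdot)^\theta$ does not yield a usable estimate: the second bound $|v_k(x+h)-v_k(x)|\le C(|(u_k-u)(x+h)|+|(u_k-u)(x)|)$ carries no $h$-difference structure, so after inserting it into the $F^{s_1}_{p,q}$ difference norm the $h$-integral need not even converge. Moreover, for $p\le n/(n+1)$ there is no $s_1$ with $n(\fracp-1)<s_1<1$, and your fallback ``supplied by the composition results of Sickel'' is not an argument for \emph{convergence} of $v_k$---those results give boundedness, which you already have from \eqref{C31}.

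The paper's remedy is a one-line trick you overlooked: instead of $L_p$, work in $L_r$ with $r=\max(1,p)$. For $p<1$ the assumption $s>\fracnp-n$ gives $E^s_{p,q}(\overline\Omega)\hookrightarrow L_1(\Omega)$, so the Lipschitz estimate yields $\|v_k\|_{L_1}\to0$; then $L_1(\Omega)\hookrightarrow B^t_{p,q}(\overline\Omega)$ for $t<0$ via $L_1\hookrightarrow B^0_{1,\infty}$ and the bounded-domain embedding \eqref{1.38}. This feeds directly into your interpolation step and disposes of the $p<1$ case without any recourse to difference characterisations.
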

\begin{proof}
In the Besov case one has, when $t<\min(0,\sigma-\varepsilon)$, that 
\begin{equation}
  B^{\sigma-\varepsilon}_{p,q}(\overline{\Omega})=
  (B^{\sigma}_{p,q}(\overline{\Omega}),
   B^{t}_{p,q}(\overline{\Omega}))_{\theta,q} 
  \label{C33}
\end{equation}
for some $\theta\in\,]0,1[\,$, cf. \cite[Thm.~3.3.6]{T2}. When $r=\max(1,r)$
\begin{equation}
  \norm{g(u)-g(v)}{B^{\sigma-\varepsilon}_{p,q}}\le c
  \norm{g(u)-g(v)}{L_r}^{1-\theta}\norm{g(u)-g(v)}{B^{\sigma}_{p,q}}^{\theta},
  \label{C34}
\end{equation}
since $L_r(\Omega)\hookrightarrow B^{t}_{p,q}(\overline{\Omega})$
then. In $L_r$ an estimate like \eqref{C10} is applicable, and
thereafter $B^{s}_{p,q}\hookrightarrow L_r$ may be used (for $p<1$
this embedding is based on the assumption $s>\fracnp-n$). Thus the
first factor on the right hand side tends to $0$ for $u\to v$ in
$B^{s}_{p,q}$ while the second remains bounded by \eqref{C31}.

In the $F^{s}_{p,q}$ case, $g(\cdot)\colon F^{s}_{p,q}\to
B^{\sigma}_{p,\infty}$ is bounded, so analogously
\begin{equation}
  g(\cdot)\colon F^{s}_{p,q}(\overline{\Omega})\to
    B^{\sigma-\eta}_{p,q}(\overline{\Omega}) 
  \label{C35}
\end{equation}
is continuous for any $\eta>0$. Then \eqref{C32} follows.
\end{proof}

\subsection{Interrelations between conditions (i), (ii) and (iii)}   
\label{inter-ssect}

\begin{rem}  \label{r=2-rem}
In the definition of $\Dm(A_T+g(\cdot))$ the condition:
\begin{equation}
  s>\fracnp-\tfrac{p}{1-p}\quad\text{for}\quad 0<p<1
  \label{C41}
\end{equation}
in (ii) of Theorem~\ref{dreg-thm} is always redundant when $T$ has class
$r=2$.

Indeed, since one has
\begin{equation}
  \fracnp-\tfrac{p}{1-p}\le \fracnp-n+2\iff p(n-1)\ge n-2
  \label{C42}
\end{equation}
it is clear that when $(s,p,q)$ satisfies (i) for $r=2$, then \eqref{C41}
holds if either $n=1$, $n=2$ or if $\frac{n-2}{n-1}\le p<1$ when $n\ge 3$.

Therefore, when (i) and (iii) hold for $r=2$, then it suffices to verify for
$n\ge3$ and $0<p<\tfrac{n-2}{n-1}$ that the first inequality in (iii) poses a 
stronger condition than \eqref{C41}.
This follows from Remark~\ref{condition-rem}.
\end{rem}

\begin{rem}   \label{n=1-rem}
For $n=1$ condition (i) in Theorem~\ref{dreg-thm} amounts to 
\begin{equation}
  s>\fracnp,
  \label{C51}
\end{equation}
since $r\ge1$. Therefore any $E^s_{p,q}$ in $\Dm(A_T+g(\cdot))$ satisfies
$E^s_{p,q}\hookrightarrow C(\overline{\Omega})$, and both (ii) and (iii) hold
when (i) does so. 

Hence Figure~\ref{DM-fig} is misleading for $n=1$, and in fact
\begin{equation}
  \Dm(A_T+g(\cdot))=\bigl\{\,(s,p,q)\bigm| s>\fracp-1+r\,\bigr\},
  \label{C52}
\end{equation}
which in contrast to the general case (for $n\ge2$) is convex.
\end{rem}

\begin{rem}   \label{n=2-rem}
Also $n=2$ gives an exception from the overview after Theorem~\ref{dreg-thm}.

In this case $\Dm(A_T+g(\cdot))$ is still not convex for $r=1$, but (ii)
implies (iii), so that the curved boundary is given by
$s=\fracnp-\tfrac{p}{1-p}$. See Remark~\ref{condition-rem} below for the
details. 

Moreover, for $n=2=r$ it follows from Remark~\ref{r=2-rem} that even (ii) is
redundant, cf.~\eqref{C42}, and hence 
\begin{equation}
  \Dm(A_T+g(\cdot))=\bigl\{\,(s,p,q)\bigm| s>\max(\fracp+1,\fracc 2p)\,\bigr\}.
  \label{C61}
\end{equation}
Evidently this is convex, so also this case deviates from the general pattern. 
\end{rem}

\begin{rem}   \label{condition-rem}
Among the requirements in Theorem~\ref{dreg-thm}, the condition
\begin{alignat*}{2}
  \upn{(iii)}'&\qquad & s&>\tfrac{1}{2}(\fracnp+3+\sqrt{(\fracnp-3)^2-8}\,)  \\
\intertext{turns out to be almost always stronger than} 
  \upn{(ii)}' &\qquad & s&>\fracnp-\frac{n}{\fracnp-n}
\end{alignat*}
when they both apply, that is for $\fracnp\in
\,]\max(n,3+\sqrt{8}),\infty[$ and $n\ge2$. The exceptions are for $n=3$ in
which case $\text{(ii)}'\implies \text{(iii)}'$ in the narrow interval with
$3+\sqrt{8}\le\fracnp< 6$ and in general for $n=2$.

Observe first that $\text{(ii)}'$ and $\text{(iii)}'$ are redundant
for $n=1$ by Remark~\ref{n=1-rem}.
To analyse when $\text{(iii)}'\implies\text{(ii)}'$ for $n\ge2$, consider
\begin{equation}
  t-3-\tfrac{2n}{t-n}\le\sqrt{(t-3)^2-8}
  \label{C22}
\end{equation}
when $t>n$ and $t\ge 3+\sqrt{8}$ as well as $n=2,3,\dots$. Notice that the
left hand side equals $(t-n)^{-1}(t^2-(n+3)t+n)$ and is negative when 
\begin{equation}
  t^2-(n+3)t+n<0;
  \label{C23}
\end{equation}
the discriminant $n^2+2n+9$ is $>0$. Thus \eqref{C22}
always holds for $t\in[\alpha_-(n),\alpha_+(n)]$ when
$2\alpha_\pm(n)=n+3\pm\sqrt{n^2+2n+9}$. Here $\alpha_+(n)>n$ and
$\alpha_-(n)<\min(n,3+\sqrt{8})$.

For $t\ge\max(\alpha_+(n),3+\sqrt8)$ it is found by taking squares that
\begin{equation}
  \begin{gathered}
  \text{\eqref{C22}}\iff
  \tfrac{4n^2}{(t-n)^2}-2(t-3)\tfrac{2n}{t-n}\le-8 
  \\
  \iff 0\le(n-2)t^2-n(n-1)t.
  \end{gathered}
  \label{C25}
\end{equation}
The last inequality is false for $n=2$, and since $\alpha_+(2)<3+\sqrt{8}$
it is proved that $\text{(ii)}'\implies\text{(iii)}'$ for $n=2$.

Since $t=0$ and $t=n(n-1)/(n-2)$ are the roots of the polynomial
$(n-2)t^2-n(n-1)t$, the implication $\text{(iii)}'\implies\text{(ii)}'$ holds
for all $t\le\max(\alpha_+(n),3+\sqrt8)$ precisely when 
\begin{equation}
  \tfrac{n(n-1)}{n-2}\ge\max(\alpha_+(n),3+\sqrt8)
  \label{C26}
\end{equation}
does so. A straightforward calculation shows that 
\begin{equation}
  \tfrac{n(n-1)}{n-2}<\alpha_+(n) \iff n\ge 4,
  \label{C27}
\end{equation}
so \eqref{C26} holds for all $n\ge4$. In addition $\alpha_+(3)=3+\sqrt{6}$
while $\tfrac{n(n-1)}{n-2}\big|_{n=3}=6$, so by \eqref{C25} the inequality
\eqref{C22} holds for $t\in [6,\infty[\,$ when $n=3$.

Altogether this shows that, except for $n=2$ and a small interval for $n=3$,
the condition $s>\fracnp-\frac{p}{1-p}$, that is $\cal O(\fracnp)$, only
interferes with the second requirement in (iii). In other 
words, when $n\ge 3$ the domains $\Dm(A_T+g(\cdot))$ are for
$\fracnp\ge6$ only 
defined by the stronger condition $\text{(iii)}'$.   
\end{rem}

\section{Proof of the Inverse Regularity Theorem} \label{ireg-sect}
Before the regularity properties of Theorem~\ref{ireg-thm} are proved in
Section~\ref{irpf-ssect} below, we
review the prerequisites on elliptic problems in Besov and
Triebel--Lizorkin spaces for a better reading. 

\subsection{The Boutet de Monvel calculus}  \label{bdm-ssect}
There are two sour\-ces for elliptic theory in the full
$B^{s}_{p,q}$ and $F^{s}_{p,q}$ scales; the Agmon--Douglis--Nirenberg
theory has been extended in \cite{FR95}, but this is not quite
adequate here, cf.\ Remark~\ref{parametrix-rem}. Instead we use the
pseudo-differential boundary operator calculus, which was generalised
to these spaces in \cite{JJ96ell} and \cite[Ch.~4]{JJ93}. 

As a general introduction to the calculus there is \cite{G2} and 
the introduction and Section~1.1 in \cite{G1}.

\subsubsection{Green Operators}   \label{grn-sssect}
In a systematic approach to boundary problems, the basic ingredient to
study is a matrix operator
\begin{equation}
  \cal A= \begin{pmatrix} P_\Omega+G & K\\ T& S\end{pmatrix}\colon
  \begin{array}{c}
  C^\infty(\overline{\Omega})^N \\ \oplus \\ C^\infty(\Gamma)^M
  \end{array}
  \to
  \begin{array}{c}
  C^\infty(\overline{\Omega})^{N'} \\ \oplus \\ C^\infty(\Gamma)^{M'}
  \end{array}
 \label{ir1}
\end{equation}
where $P_\Omega:=r_\Omega P e_\Omega$ is the \emph{truncation} to
$\Omega$ of a pseudo-differential operator on $\Rn$, $K$ is a {\em
Poisson\/} operator, $T$ is a \emph{trace} operator, $S$ is a
pseudo-differential operator in $\Gamma$ whilst $G$ is a \emph{singular
Green} operator.

As examples of \eqref{ir1}, or of the so-called Green operators, one can take 
\begin{equation}
   \begin{pmatrix}\mlap\\ \gamma_0\end{pmatrix},\quad
    \begin{pmatrix}\mlap\\ \gamma_1\end{pmatrix}\quad\text{or}\quad
    \begin{pmatrix}A \\ T\end{pmatrix},
 \label{ir2}
 \end{equation}
whereby $M=0$ since they are column matrices, or their parametrices
\begin{equation}
  \begin{pmatrix}R_D&K_D\end{pmatrix},\quad
 \begin{pmatrix}R_N&K_N\end{pmatrix}\quad\text{resp.}\quad
 \begin{pmatrix}R&K\end{pmatrix}
 \label{ir3}
 \end{equation}
(when $\left(\begin{smallmatrix}A\\T\end{smallmatrix}\right)$ is
elliptic); hereby $M'=0$ because of the row-form.

For realisations like $A_T$ considered above a variety of results
follow easily from a study of
$\left(\begin{smallmatrix}A\\T\end{smallmatrix}\right)$, so we focus
on the latter operator to begin with.

To get  a good calculus of Green operators like $\cal A$ above,
Boutet de~Monvel \cite{BM71} introduced first of all the
requirement that $P$ should have the
transmission property at $\Gamma\subset\Rn$. That is to say, for
$N=N'=1$, $P_\Omega$ should map $C^\infty(\overline{\Omega})$ into
itself\,---\,when $P$ merely
belongs to the H{\"o}rmander class $S^{d}_{1,0}(\Rn\times\Rn)$,
then $P_\Omega(C^\infty(\overline{\Omega}))\subset
H^{-d}(\overline{\Omega})\cap C^\infty(\Omega)$ (since the singular
support of $P(e_\Omega\varphi)$, for $\varphi\in
C^\infty(\overline{\Omega})$, as 
a subset of $\Gamma$, is not felt after application of $r_\Omega$); 
thus the transmission property rules out blow-up at $\Gamma$.

Secondly, the notion of singular Green operators $G$ was introduced in
order to encompass solution operators; e.g., when the inverse of
$\left(\begin{smallmatrix}\mlap\\
\gamma_0\end{smallmatrix}\right)$ is 
denoted $\left(\begin{smallmatrix}R_D &K_D\end{smallmatrix}\right)$, 
then $R_D$ is \emph{not} a truncated pseudo-differential operator. In fact,
$R_D=\op{OP}(|\xi|^{-2})_\Omega+G_D$, where the compensating term
$G_D$ is a singular Green operator equal to
$-K_D\gamma_0\op{OP}(|\xi|^{-2})_\Omega$.

For the precise symbol classes of $P_\Omega$, $G$, $K$, $T$ and $S$,
with the uniformly estimated class $S^{d}_{1,0}(\Rn\times\Rn)$ as the basis,
the reader is referred to \cite{GK}. A discussion of the
transmission property is found in a work of Grubb and H{\"o}rmander
\cite{GH}; let us also mention \cite{G2}, \cite[Sect.~3.2]{JJ96ell} 
and Section~1.2 in the second edition of \cite{G1}.

We proceed to state relevant properties of $\cal A$. 
Further details and proofs are given in \cite{JJ96ell}. Specialising to 
$\cal A=\left(\begin{smallmatrix}A\\ T\end{smallmatrix}\right)$ with $A$
and $T$ as in Section~\ref{intr-sect}, $P_\Omega=A$ is of order~$2$,
$G=0$ and ($K$ and $S$ being redundant, i.e. $M=0$) $T$ is of
order $d$ and class $r=1$ or $2$. Then
\begin{align}
  \cal A&\colon B^{s}_{p,q}(\overline{\Omega})\to 
  B^{s-2}_{p,q}(\overline{\Omega})\oplus B^{s-d-\fracpi}_{p,q}(\Gamma)
  \label{ir4}  \\
  \cal A&\colon F^{s}_{p,q}(\overline{\Omega})\to 
  F^{s-2}_{p,q}(\overline{\Omega})\oplus B^{s-d-\fracpi}_{p,p}(\Gamma)
  \label{ir5}
\end{align} 
are bounded when $s>r+\max(\fracp-1,\fracnp-n)$.

The assumed ellipticity of $\cal A$ in the sense of the calculus amounts to 
\begin{itemize}
    \item[(I)] $A$'s principal symbol,
      $a^0(x,\xi)=\sum_{|\alpha|=2} a_\alpha(x)\xi^\alpha$, is non-zero,
      \begin{equation}
        a^0(x,\xi)\ne 0, \quad\text{for $x\in\Omega$ and $|\xi|\ge1$};
        \label{ellip1}
      \end{equation}
  \item[(II)] the principal boundary symbol operator
    $a^0(D_n)=a^0(x',0,\xi',D_n)$,
    \begin{equation}
      a^0(D_n)\colon \cal S(\Rp)\to
      \begin{array}{c}
        \cal S(\Rp)\\ \oplus \\ \C
      \end{array}
      \label{ellip2}
    \end{equation}
    is a \emph{bijection} for each $x\in\Omega$ and $|\xi'|\ge1$.
\end{itemize}
Here $a^0(D_n)$ is defined from the principal part of
$\left(\begin{smallmatrix}A\\ T\end{smallmatrix}\right)$ by 
means of local coordinates in which $\Gamma$ is a subset of
$\{x_n=0\}$; there $x_n$ is set equal to $0$ and $D_j$ is replaced by
$\xi_j$ when $j<n$. 

The ellipticity assures the existence of a parametrix $\widetilde{\cal
A}$, that is, another Green operator in the calculus such that 
\begin{equation}
  \widetilde{\cal A}\cal A=1-\cal R,\qquad
  \cal A\widetilde{\cal A}=1-\cal R'
  \label{ir6}
\end{equation}
for negligible operators $\cal R$ and $\cal R'$; i.e.~Green operators
of order $-\infty$. Although $\cal A$ is purely differential,
$\widetilde{\cal A}$ has the form $\begin{pmatrix} R&K
\end{pmatrix}$ where $R=P_\Omega+G$ for a truly
pseudo-differential operator $P$ with transmission property at
$\Gamma$ and a non-trivial singular Green operator. The orders of 
$R$ and $K$ are $-2$ and $-d$, respectively, while $R$ may be taken of
class~$r-2$ (best possible), cf.~\cite[Thm.~5.4]{G3}. Hence, by
\eqref{ir4}--\eqref{ir5}, 
\begin{align}
  \widetilde{\cal A} &\colon B^{s-2}_{p,q}(\overline{\Omega})\oplus
    B^{s-d-\fracpi}_{p,q}(\Gamma)\to B^{s}_{p,q}(\overline{\Omega}) 
  \label{ir7} \\
  \widetilde{\cal A} &\colon F^{s-2}_{p,q}(\overline{\Omega})\oplus
    B^{s-d-\fracpi}_{p,p}(\Gamma)\to F^{s}_{p,q}(\overline{\Omega}) 
  \label{ir8}
\end{align} 
are bounded for $s>r+\max(\fracp-1,\fracnp-n)$.

Using $\widetilde{\cal A}$ it may be shown that there exist two
finite-dimensional subspaces
\begin{equation}
  \ker \cal A  \subset C^\infty(\overline{\Omega})
\qquad
  \cal N \subset C^\infty(\overline{\Omega})\oplus C^\infty(\Gamma),
  \label{ir10}
\end{equation}
(and that $\cal A(E^s_{p,q})$ is closed) such that whenever 
$s>r+\max(\fracp-1,\fracnp-n)$,
\begin{gather}
   \ker\cal A=\bigl\{\,u\in E^s_{p,q}\bigm| \cal Au=0\,\bigr\},
  \label{ir11} \\
  \begin{aligned}
    \cal A(B^{s}_{p,q})\oplus\cal N&=
    B^{s-2}_{p,q}(\overline{\Omega})\oplus B^{s-d-\fracpi}_{p,q}(\Gamma),
    \\
    \cal A(F^{s}_{p,q})\oplus\cal N&=
    F^{s-2}_{p,q}(\overline{\Omega})\oplus B^{s-d-\fracpi}_{p,p}(\Gamma).
  \end{aligned}
  \label{ir12}
\end{gather}
In other words, the kernel of $\cal A$ is $(s,p,q)$-independent and the
range complement may be picked with this property.

\subsubsection{Realisations}   \label{realisations-sssect}
For $A_T$ in \eqref{AT-def}--\eqref{AT-dom} the subspaces $B^{s}_{p,q;T}$ and
$F^{s}_{p,q;T}$ defined by $Tu=0$ make sense for
$s>r+\max(\fracp-1,\fracnp-n)$, and  
\begin{equation}
  A_T\colon E^s_{p,q;T}\to E^{s-2}_{p,q}
  \label{ir20}
\end{equation}
is bounded for such $(s,p,q)$, by \eqref{ir4} and \eqref{ir5}.

Ellipticity of $A_T$ means that
$\left(\begin{smallmatrix}A\\T\end{smallmatrix}\right)$  is elliptic,
i.e.\ that (I) and (II) are satisfied. In the elliptic case even $A_T$
has a parametrix, say $R_0$; it is of the form $(A_0)_\Omega+G_0$,
where $A_0$ is a parametrix of $A$ on $\Rn$ and $G_0$ is a singular
Green operator, both of order $-2$ and $(A_0)_\Omega+G_0$ of class
$r-2$, so 
\begin{equation}
  R_0\colon E^{s-2}_{p,q}\to E^s_{p,q}
  \label{ir21}
\end{equation}
is bounded whenever $s>r+\max(\fracp-1,\fracnp-n)$ by the general
result in \eqref{ir4}--\eqref{ir5}. More importantly, $R_0$ can be
taken so that 
\begin{itemize}
  \item $R_0$ maps $E^{s-2}_{p,q}$ into $D(A_T)=E^s_{p,q;T}$;
  \item both $R_0A_T-I$ and $A_TR_0-I$ have finite-dimensional ranges
    in $C^\infty(\overline{\Omega})$. 
\end{itemize}
This follows as in \cite[Prop.~1.4.2]{G1}; when $r\ne2$ or $d\ne2$ one
can modify the order and class reduction in (1.4.14) there,
as in \cite[(5.32)]{G3}.

For the Fredholm properties of $A_T$ one has obviously that $\ker
A_T=\ker\cal A$, but it is a point to show that $A_T(E^s_{p,q;T})$ is
complemented also for $p$, $q<1$ in which case $E^s_{p,q}$ is not locally
convex. 
However, when $T$ has a Poisson operator $K$ as a right inverse,
i.e.\ $TK=I$, then 
\begin{equation}
  \Phi= \begin{pmatrix} I & -AK\end{pmatrix},
  \label{ir22}
\end{equation} 
may be used in a way similar to the proof of \cite[4.3.1]{G1} to get

\begin{lem}  \label{cmpl-lem}
$1^\circ$ When $(s,p,q)$ is admissible and $W$ is a range complement of
$\cal A$, then $A_T(E^s_{p,q;T})$ is closed while $\dim \Phi(W)=\dim W$ and
$E^{s-2}_{p,q}=A_T(E^s_{p,q;T})\oplus \Phi(W)$.

$2^\circ$ A subspace $\cal N\subset C^\infty$ is a range complement of 
$A_T$ for some $(s,p,q)$ if and only if it
is so for every $(s,p,q)$ admissible for $A_T$.
\end{lem}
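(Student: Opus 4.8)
The plan is to prove both parts by exploiting the operator $\Phi=\begin{pmatrix} I & -AK\end{pmatrix}$ from \eqref{ir22} together with the Fredholm structure for $\cal A$ recorded in \eqref{ir11}--\eqref{ir12}. The key observation I would start from is the identity relating $\cal A$ and $A_T$ through $K$. Since $TK=I$, any $u\in E^s_{p,q}$ decomposes as $u=(u-K Tu)+K Tu$, where $u-KTu\in E^s_{p,q;T}$ because $T(u-KTu)=Tu-Tu=0$. Applying $\cal A=\left(\begin{smallmatrix}A\\T\end{smallmatrix}\right)$ gives $\cal A u=\left(\begin{smallmatrix}A u\\ Tu\end{smallmatrix}\right)$, and I would verify the matrix identity $\Phi\,\cal A u=Au-AK\,Tu=A(u-KTu)=A_T(u-KTu)$. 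Thus $\Phi\circ\cal A$ maps $E^s_{p,q}$ onto $A_T(E^s_{p,q;T})$, which is the algebraic bridge making the lemma work.

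For part $1^\circ$, I would argue as follows. First, closedness of $A_T(E^s_{p,q;T})$: from the identity above, $A_T(E^s_{p,q;T})=\Phi(\cal A(E^s_{p,q}))$, and since $\cal A(E^s_{p,q})$ is closed of finite codimension (by \eqref{ir12}) while $\Phi$ is bounded, one checks $\Phi$ restricted to $\cal A(E^s_{p,q})$ has closed range by a finite-codimension argument. Next I would show $E^{s-2}_{p,q}=A_T(E^s_{p,q;T})\oplus\Phi(W)$. Applying $\Phi$ to the decomposition $B^{s-2}_{p,q}(\overline{\Omega})\oplus B^{s-d-\fracpi}_{p,q}(\Gamma)=\cal A(E^s_{p,q})\oplus W$ from \eqref{ir12}, and using that $\Phi$ is surjective onto $E^{s-2}_{p,q}$ (because $\Phi\left(\begin{smallmatrix}f\\0\end{smallmatrix}\right)=f$ for any $f\in E^{s-2}_{p,q}$), I would obtain $E^{s-2}_{p,q}=\Phi(\cal A(E^s_{p,q}))+\Phi(W)=A_T(E^s_{p,q;T})+\Phi(W)$. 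To upgrade the sum to a direct sum and get $\dim\Phi(W)=\dim W$, the main obstacle is that $\Phi$ need not be injective on all of $W$; I would resolve this by showing $\ker\Phi\cap W=\{0\}$. An element $\left(\begin{smallmatrix}f\\g\end{smallmatrix}\right)$ lies in $\ker\Phi$ iff $f=AKg$, i.e.\ iff it equals $\cal A(Kg)=\left(\begin{smallmatrix}AKg\\g\end{smallmatrix}\right)$, so $\ker\Phi\subset\cal A(E^s_{p,q})$; since $W$ is complementary to $\cal A(E^s_{p,q})$, the intersection is trivial, giving both the directness and the dimension count.

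For part $2^\circ$, I would use the $(s,p,q)$-independence already built into \eqref{ir11}--\eqref{ir12}: the kernel $\ker\cal A$ and a range complement $\cal N\subset C^\infty$ may be chosen independently of the parameters. Given that $\cal N\subset C^\infty$ is a range complement of $A_T$ for one admissible $(s,p,q)$, I would show it is one for every admissible parameter. The forward direction follows because $\cal N$ consists of smooth functions, so $\cal N\subset E^{s'-2}_{p',q'}$ for every admissible $(s',p',q')$, and the equality $E^{s'-2}_{p',q'}=A_T(E^{s'}_{p',q';T})\oplus\cal N$ is then inherited from the parameter-independent decomposition in $1^\circ$ combined with \eqref{ir12}. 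The anticipated hard part is verifying that the \emph{same} finite-dimensional smooth space works across all parameters without gaining or losing dimension; this is controlled by the fact that $\dim\cal N=\dim W$ is the Fredholm cokernel dimension of $\cal A$, which \eqref{ir12} guarantees is constant, and by part $1^\circ$ which transfers this invariance from $\cal A$ to $A_T$ via the fixed operator $\Phi$. Since $\Phi$ does not depend on $(s,p,q)$, the equivalence holds for every admissible parameter simultaneously, completing the proof.
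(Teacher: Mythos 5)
Your part $1^\circ$ is correct, and it stays within the paper's own framework (everything pivots on the operator $\Phi$ of \eqref{ir22}), but your execution is somewhat cleaner and more self-contained than the paper's: the paper cites \cite[4.3.1]{G1} for the injectivity of $\Phi$ on $W$ and for the linear independence of $\Phi(W)$ from $R(A_T):=A_T(E^s_{p,q;T})$, and then establishes $\operatorname{codim}R(A_T)=\dim\Phi(W)$ by a dimension comparison in the quotient $E^{s-2}_{p,q}/R(A_T)$, whereas you obtain spanning for free by applying the surjection $\Phi$ to the decomposition \eqref{ir12}, and injectivity from the observation $\ker\Phi\subset\cal A(E^s_{p,q})$ (since $(f,g)\in\ker\Phi$ forces $(f,g)=\cal A(Kg)$). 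Two small repairs: spell out the directness explicitly (if $\Phi w=A_Tu=\Phi(\cal A u)$ with $w\in W$, $u\in E^s_{p,q;T}$, then $w-\cal Au\in\ker\Phi\subset\cal A(E^s_{p,q})$, whence $w\in W\cap\cal A(E^s_{p,q})=\{0\}$ and $\Phi w=0$), and place the closedness of $R(A_T)$ \emph{after} the sum decomposition, since the finite-codimension criterion you appeal to (\cite[19.1.1]{H}, carried over to the quasi-Banach setting via \cite{R}, exactly as in the paper) needs the finite codimension that the decomposition supplies.

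Part $2^\circ$ has a genuine gap. The substantive direction is that a complement for \emph{some} $(s,p,q)$ is a complement for \emph{every} $(s,p,q)$, and you reduce it to the constancy of $\dim\cal N=\dim W$. Dimension counting cannot suffice: a finite-dimensional subspace whose dimension equals the codimension of a closed range need not complement it\,---\,it must also meet the range trivially, and nothing in your argument rules out $\cal N\cap A_T(E^{s'}_{p',q';T})\neq\{0\}$ at a second parameter $(s',p',q')$. Note also that \eqref{ir12} asserts only the \emph{existence} of one parameter-independent range complement for $\cal A$; it says nothing about an arbitrary smooth complement of $R(A_T)$ given at a single parameter, so there is no invariance to ``transfer via $\Phi$'' until such a statement has been produced. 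The paper closes precisely this hole in two steps: first, at the given parameter, $W:=\cal N\times\{0\}$ is linearly independent of $\cal A(E^s_{p,q})$ (if $(v,0)=\cal Au$, then $Tu=0$ and $v=A_Tu\in R(A_T)\cap\cal N=\{0\}$) and has the right dimension by $1^\circ$, hence is a range complement of $\cal A$ there; second, it invokes Theorem 1.3 or 5.2 of \cite{JJ96ell}\,---\,a strictly stronger parameter-independence statement than \eqref{ir12}\,---\,to conclude that this $W$ complements $\cal A$'s range for \emph{every} admissible parameter, and then applies $1^\circ$ at each parameter together with $\Phi(\cal N\times\{0\})=\cal N$. (Alternatively, you could repair your argument by elliptic regularity: if $v\in\cal N$ and $v=A_Tu$ with $u\in E^{s'}_{p',q';T}$, then $u=R_0v-(R_0A_T-I)u$ lies in $C^\infty(\overline{\Omega})$ by the mapping properties of the parametrix $R_0$, so $v\in R(A_T)$ at the original parameter as well, forcing $v=0$; combined with your dimension count and the closedness from $1^\circ$ this restores the conclusion.) Either way, an input beyond \eqref{ir12} and $1^\circ$ is indispensable.
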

\begin{proof} As in \cite[4.3.1]{G1}, $\Phi$ is seen to be injective on
$W$, hence $\dim \Phi(W)=\dim W$, and $\Phi(W)$ to be linearly independent
of $R(A_T):=A_T(E^s_{p,q;T})$. Then, using the quotient $Q$ onto
$E^{s-2}_{p,q}/R(A_T)$, $\dim \Phi(W)\le \dim Q(E^{s-2}_{p,q})$
follows. But a finite dimensional $U\subset Q(E^{s-2}_{p,q})$ equals $QV$
for some $V$ linearly independent of $R(A_T)$ and with $\dim U=\dim
V\le\dim W$ (since $V\times\{0\}$ is linearly independent of $\cal
A(E^{s}_{p,q})$).  Altogether $\dim Q(E^{s-2}_{p,q})=\dim \Phi(W)<\infty$,
so $R(A_T)$ is closed by \cite[19.1.1]{H} (carried over to $E^s_{p,q}$ by
\cite[1.41(d)+2.12(b)]{R}) and complemented by $\Phi(W)$.

Since $\cal N=\Phi(\cal N\times\{0\})$, $W=\cal N\times\{0\}$ is possible
for dimensional reasons. By Theorem~1.3 or 5.2 of \cite{JJ96ell}, $W$ is a
range complement for every $(s,p,q)$; by $1^\circ$, so is $\cal N$. 
\end{proof}

Existence of such a $K$ is assured when $T$ is \emph{normal}; see 
Proposition~1.6.5, 
Definition~1.4.3 and Remark~1.4.4 in \cite{G1}. For $d=0$ normality
means that $T=S_0\gamma_0$, where $S_0(x)$ is a \emph{function}
without roots on $\Gamma$; when $d=1$, $T$ is normal when
$S_1(x)$ is such a zero-free function. 

Finally, one can in this case project onto the kernel
and range of $A_T$.

\begin{prop}  \label{cmpl-prop}
Let $A_T$ be an elliptic realisation of $A$ as described above, with a
right inverse of $T$ (or $T$ normal). 

For each $C^\infty$ range complement $\cal N$ and each
$s>r+\max(\frac1p-1,\fracnp-n)$ there is a continuous idempotent 
\begin{equation}
  Q\colon E^{s-2}_{p,q}\to E^{s-2}_{p,q}, \quad\text{projecting onto
  $\cal N$ along $A_T(E^s_{p,q;T})$.}
  \label{ir23}
\end{equation}
When $\{w_1,\dots,w_m\}$ is an $L_2$-orthonormal basis for $\ker A_T$,
\begin{equation}
  Pu=\sum_{j=1}^m \dual{u}{w_j} w_j\quad\text{is bounded}\quad
  P\colon E^s_{p,q}\to E^s_{p,q}
  \label{ir25}  
\end{equation}
and projects onto $\ker A_T$ whenever $s>r+\max(\frac1p-1,\fracnp-n)$.

Furthermore, when $A_T$ is self-adjoint in $L_2(\Omega)$, one
can take $\cal N=\ker A_T$ for every $(s,p,q)$ as above and then
\eqref{ir25} holds even on $E^{s-2}_{p,q}$.  
\end{prop}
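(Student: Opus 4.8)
The plan is to establish the three claims successively, using Lemma~\ref{cmpl-lem} for $Q$, the embeddings of Section~\ref{spaces-ssect} for $P$, and the $L_2$-theory together with Green's formula for the self-adjoint refinement. For the projection $Q$, Lemma~\ref{cmpl-lem} already provides that $A_T(E^s_{p,q;T})$ is closed in $E^{s-2}_{p,q}$ and that $E^{s-2}_{p,q}=A_T(E^s_{p,q;T})\oplus\cal N$ for the given finite-dimensional $\cal N\subset C^\infty(\overline\Omega)$. I would simply let $Q$ be the linear projection onto $\cal N$ with kernel $A_T(E^s_{p,q;T})$, so that $Q^2=Q$ by construction. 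The only delicate point is continuity, since for $p<1$ or $q<1$ the space $E^{s-2}_{p,q}$ need not be locally convex. This I would settle by noting that the quotient $E^{s-2}_{p,q}/A_T(E^s_{p,q;T})$ is \emph{finite}-dimensional and Hausdorff (the latter because the range is closed), hence topologically isomorphic to a Euclidean space irrespective of local convexity; as $Q$ is the composition of the quotient map with the linear isomorphism of this quotient onto $\cal N$, it is continuous. Equivalently, writing $\cal N=\Span\{n_1,\dots,n_m\}$, one obtains continuous coordinate functionals $\ell_k$ vanishing on $A_T(E^s_{p,q;T})$ with $Q=\sum_k\ell_k(\cdot)\,n_k$.

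For the explicit operator $P$, I would first observe that admissibility forces $E^s_{p,q}(\overline\Omega)\hookrightarrow L_1(\Omega)$: for $p\ge1$ one has $s>0$, while for $p<1$ one has $s>\fracnp-n$, so that \eqref{1.24}--\eqref{1.25} apply (recall $\Omega$ is bounded). Consequently each $w_j\in\ker A_T\subset C^\infty(\overline\Omega)$ defines a continuous functional $u\mapsto\dual{u}{w_j}=\int_\Omega u\,w_j\,dx$, whence $P=\sum_{j=1}^m\dual{\cdot}{w_j}\,w_j$ is a continuous finite-rank operator with range in $\ker A_T\subset E^s_{p,q}$. The $L_2$-orthonormality $\dual{w_k}{w_j}=\delta_{jk}$ gives $Pw_k=w_k$, so $P^2=P$ and $P$ projects onto $\ker A_T$.

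For the self-adjoint case I would argue that the $H^2$-realisation has $A_T(E^2_{2,2;T})=(\ker A_T)^\perp$ closed in $L_2(\Omega)$, so that $\cal N=\ker A_T$ is a range complement at $(s,p,q)=(2,2,2)$; by Lemma~\ref{cmpl-lem}~$2^\circ$ it is then one for every admissible parameter. Hence the $Q$ of the first part now projects onto $\ker A_T$ and is continuous on the whole of $E^{s-2}_{p,q}$. On the dense subspace of smooth functions of $E^s_{p,q}$, Green's formula combined with the adjoint (here identical) boundary conditions satisfied by each $w_j$ shows $A_T(E^s_{p,q;T})\cap C^\infty\subset(\ker A_T)^\perp$; thus there the splitting induced by $Q$ is $L_2$-orthogonal and $Q$ coincides with $\sum_j\dual{\cdot}{w_j}\,w_j=P$. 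By continuity $Q=P$ on $E^s_{p,q}$, and since $Q$ is continuous on the larger $E^{s-2}_{p,q}\supset E^s_{p,q}$ (using \eqref{1.19'}), the coordinate functionals $\ell_j$ of $Q$ are the desired continuous extensions of $u\mapsto\dual{u}{w_j}$, so that \eqref{ir25} holds verbatim on $E^{s-2}_{p,q}$.

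The step I expect to be the main obstacle is precisely this last extension: for $r=1$ the exponent $s-2$ can fall below the $L_1$-threshold, so $\dual{u}{w_j}$ is no longer an honest integral and its continuity on $E^{s-2}_{p,q}$ is not evident. The device above circumvents any direct estimate by reading the functionals off the already-continuous $Q$; implicitly this rests on self-adjointness, which both makes $\ker A_T$ an admissible complement of the closed range $A_T(E^s_{p,q;T})$ in every $E^{s-2}_{p,q}$ and supplies, through Green's formula, the identification $Q=P$ on the dense smooth subspace.
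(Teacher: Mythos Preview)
Your arguments for $Q$ and for $P$ on $E^s_{p,q}$ are essentially those of the paper: the paper invokes \cite[Thm.~5.16]{R} for the continuity of $Q$ (which is exactly your finite-dimensional-quotient observation) and uses the same $E^s_{p,q}\hookrightarrow L_1$ embedding for $P$.

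For the self-adjoint extension to $E^{s-2}_{p,q}$ you take a different route, and there is a gap. You argue $Q=P$ on $C^\infty(\overline\Omega)$ via Green's formula and then extend by density; but $C^\infty(\overline\Omega)$ is not dense in $E^{s}_{p,q}$ or $E^{s-2}_{p,q}$ when $p=\infty$ or $q=\infty$, so this step fails in those cases. Moreover, even where density holds, you have only produced continuous functionals $\ell_j$ extending the integral pairing from $C^\infty$; you have not shown that the \emph{natural} distributional pairing $\dual{u}{w_j}$ is itself continuous on $E^{s-2}_{p,q}$, which is the content of \eqref{ir25} and is used downstream (e.g.\ the identity $\dual{Qf}{w_k}=\dual{f}{Qw_k}$ in Section~\ref{ex-sect}).

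The paper's approach is direct. For $r=2$ the admissibility inequality already embeds $E^{s-2}_{p,q}$ into the dual of a space containing $\ker A_T$. For $r=1$, which you correctly single out as the obstacle, the key fact you miss is that $w\in\ker A_T$ forces $\gamma_0 w=0$: here $T=S_0\gamma_0$ with $S_0$ a function on $\Gamma$ without zeros (since $T$ has a right inverse), so $Tw=0$ gives $\gamma_0 w=0$. Hence each $w_j$ lies in the subspace $E^{s_2}_{p_2,q_2;0}=\{v:\gamma_0 v=0\}$, and one can embed $E^{s-2}_{p,q}$ into some $E^{s_1-2}_{p_1,q_1}$ with $p_1,q_1>1$ which is dual to such an $E^{s_2}_{p_2,q_2;0}$; then $\dual{u}{w_j}$ is the bona fide duality pairing and continuous in $u$. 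This works for all admissible $(s,p,q)$, including $q=\infty$.
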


\begin{proof} When \eqref{M12} holds \cite[Thm.~5.16]{R} gives the
existence and continuity of $Q$. 
This does not just carry over to $\ker A_T$, for application of, say,
\cite[Lem.~4.21]{R} requires local convexity.

However, the given $P$ is defined for $u\in E^s_{p,q}$ when
$s>r+\max(\frac1p-1,\fracnp-n)$, for since $r\in\{\,1,2\,\}$ we have
$s>0$ so that $E^s_{p,q}\hookrightarrow L_1(\Omega)$,
$\dual{u}{w_j}=\int_\Omega uw_j$ is defined and 
\begin{equation}
  |\dual{u}{w_j}|\le \norm{u}{L_1}\norm{w_j}{L_\infty}\le
  c\norm{u}{E^s_{p,q}}\norm{w_j}{L_\infty};
  \label{ir27}
\end{equation}
continuity of $P$ follows. By construction $P^2=P$ and 
$\ker A_T=P(E^s_{p,q})$.

When $A_T=A_T^*$ in $L_2$, then $\ker A_T$ is a range complement in
$E^{s-2}_{p,q}$ by the lemma. Consider first $r=2$. Then the inequality
for $s$ implies that $E^{s-2}_{p,q}$ is contained in the dual 
of some $E^{s_2}_{p_2,q_2}\supset\ker A_T$, and analogously to the above $P$
is a continuous projection in $E^{s-2}_{p,q}$ onto $\ker A_T$.

For $r=1$ elements of e.g.\ $H^{-1}$ may occur in \eqref{ir25}. However, 
$w\in \ker A_T$ implies $\gamma_0 w=0$: evidently $Tw=0$ where
$T=S_0\gamma_0$ and $S_0(x)$ is a function on $\Gamma$ (being a differential
operator of order $0$ by assumption), and $S_0(x)$ cannot have any zeroes
because $S_0\gamma_0$ has a right inverse. Thus $\gamma_0w=0$.

So when $s>1+\max(\frac1p-1,\fracnp-n)$, the space $E^{s-2}_{p,q}$ is
embedded into some $E^{s_1-2}_{p_1,q_1}$ with $s_1>1+\max(\fracc1{p_1}-1,
\fracc n{p_1}-n)$ and $p_1$, $q_1\in\,]1,\infty]$. The latter is
dual to $E^{s_2}_{p_2,q_2;0}=\{\,u\in E^{s_2}_{p_2,q_2}\mid 
\gamma_0u=0\,\}$ when $s_1-2+s_2=0$, $\fracc1{p_1}+\fracc1{p_2}=1$ and
$\fracc1{q_1}+\fracc1{q_2}=1$, and since $\ker A_T\subset E^{s_2}_{p_2,q_2;0}$,
$P$ in \eqref{ir25} is defined on $E^{s_1-2}_{p_1,q_1}$, hence on
$E^{s-2}_{p,q}$. Again $P$ is bounded and idempotent. 
\end{proof}
 
\subsection{Proof of Theorem~\ref{ireg-thm}}   \label{irpf-ssect}
We now turn to one of the main subjects in this article: the Inverse
Regularity Theorem for the problem in \eqref{cmp-pb}. For the proof
the bootstrap method in \cite{JJ93,JJ95stjm,JJ95reg} is extended to overcome
the difficulties caused by the non-convexity of $\Dm(A_T+g(\cdot))$. 

\bigskip

Basically the non-linear estimates and the elliptic theory is used as
follows: suppose $u(x)$ in $E^{s_1}_{p_1,q_1;T}$ is a solution of 
\begin{equation}
  A_T u+ g(u) = f
  \label{ir51}
\end{equation}  
for $f(x)$ in $E^{s_0-2}_{p_0,q_0}$ with both $(s_0,p_0,q_0)$ and
$(s_1,p_1,q_1)$ in $\Dm(A_T+g(\cdot))$. Then $R_0$, the
parametrix of $A_T$ introduced in \eqref{ir21} ff., is bounded 
\begin{equation}
  R_0\colon E^{s_0-2}_{p_0,q_0}\to E^{s_0}_{p_0,q_0;T}
  \label{ir52}
\end{equation}
because $(s_0,p_0,q_0)\in\Dm(A_T+g(\cdot))$. Thus $R_0$ can be applied
to the right hand side of \eqref{ir51}, hence to the left hand
side. By Theorem~\ref{dreg-thm} and \eqref{ir20}, both $A_Tu$ and
$g(u)$ are in $E^{s_1-2}_{p_1,q_1}$, and so $R_0$ acts linearly
on the left hand side of \eqref{ir51}. After a rearrangement, cf.\
Remark~\ref{parametrix-rem} below, we get 
\begin{equation}
  u= R_0f-R_0g(u)+\cal Ru
  \label{ir53}  
\end{equation}
where $\cal R:=R_0A_T-I$ is an operator with range in
$C^\infty(\overline{\Omega})$. 

Since $R_0g(u)\in E^{\sigma_1+2}_{p_1,q_1}$ for some
$\sigma_1>s_1-2$ by Theorem~\ref{dreg-thm}, one may now search for 
$E^{s_2}_{p_2,q_2;T}$ large enough to contain 
$E^{s_0}_{p_0,q_0;T}+ E^{\sigma_1+2}_{p_1,q_1;T}$, and thus
\begin{equation}
  R_0f-R_0g(u)+\cal Ru \in E^{s_2}_{p_2,q_2;T}.
  \label{ir54}
\end{equation}
Then $u\in E^{s_2}_{p_2,q_2;T}$, and this fact is used to get a
new knowledge about $R_0g(u)$ and then for $u$
itself. Thus we seek spaces $E^{s_1}_{p_1,q_1;T}$,
$E^{s_2}_{p_2,q_2;T}$, \dots containing $u(x)$, and the task is to
obtain $E^{s_j}_{p_j,q_j}\hookrightarrow E^{s_0}_{p_0,q_0}$ for some $j$.

Obviously it is irrelevant for the application of $g(\cdot)$ whether we
consider $u$ in the subspace $E^{s_j}_{p_j,q_j;T}$ or not, so for
simplicity we use the full space $E^{s_j}_{p_j,q_j}$.

Furthermore we shall first treat the case where
$E^{s_0-2}_{p_0,q_0}=F^{s_0-2}_{p_0,\infty}(\overline{\Omega})$ and
$E^{s_1}_{p_1,q_1;T}=F^{s_1}_{p_1,\infty;T}$; the other cases follow
from this at the end. This allows us to work with the function
$\sigma(s,p)$ from \eqref{sgm-cnd}, or more relevantly
\begin{equation}
 \delta(s,p):=\sigma(s,p)-(s-2),
  \label{delta-cnd} 
\end{equation}
which measures the deviation of $g(\cdot)$'s order from that of $A_T$.
Thus $\sigma_1+2$ above \eqref{ir54}  should be replaced by
$s_1+\delta(s_1,p_1)$, but for convenience we let
$\delta_j=\delta(s_j,p_j)$ in the following. 

\subsubsection{The Worst Case}   \label{wrst-sssect}
The sets corresponding to $\Dm(A_T+g(\cdot))$ in
\cite{JJ93,JJ95stjm,JJ95reg} are all convex, so to begin with we first
consider the case when 
\begin{equation}
  \text{$(\fracc n{p_0},s_0)$ and $(\fracc n{p_1},s_1+\delta_1)$}
  \label{ir61}
\end{equation}
cannot be connected by a straight line in the $(\fracnp,s)$-plane.
The worst case is when this is caused by the hyperbola defined by
condition~(iii) in Theorem~\ref{dreg-thm}. (The other possibility
stems from the condition $s>\fracnp-\tfrac{p}{1-p}$.)

If $s_1+\delta_1>s_0$ we note that also
$s_1+\delta_1-\fracc n{p_1}>s_0-\fracc n{p_0}$ (otherwise there would be a
connecting straight line), and therefore $E^{s_1+\delta_1}_{p_1,q_1}$ is
embedded into $E^{s_0}_{p_0,q_0}$. Thus $(s_2,p_2,q_2)=(s_0,p_0,q_0)$
is possible and the conclusion (reached above) that $u\in
E^{s_2}_{p_2,q_2}$ is already the desired one.

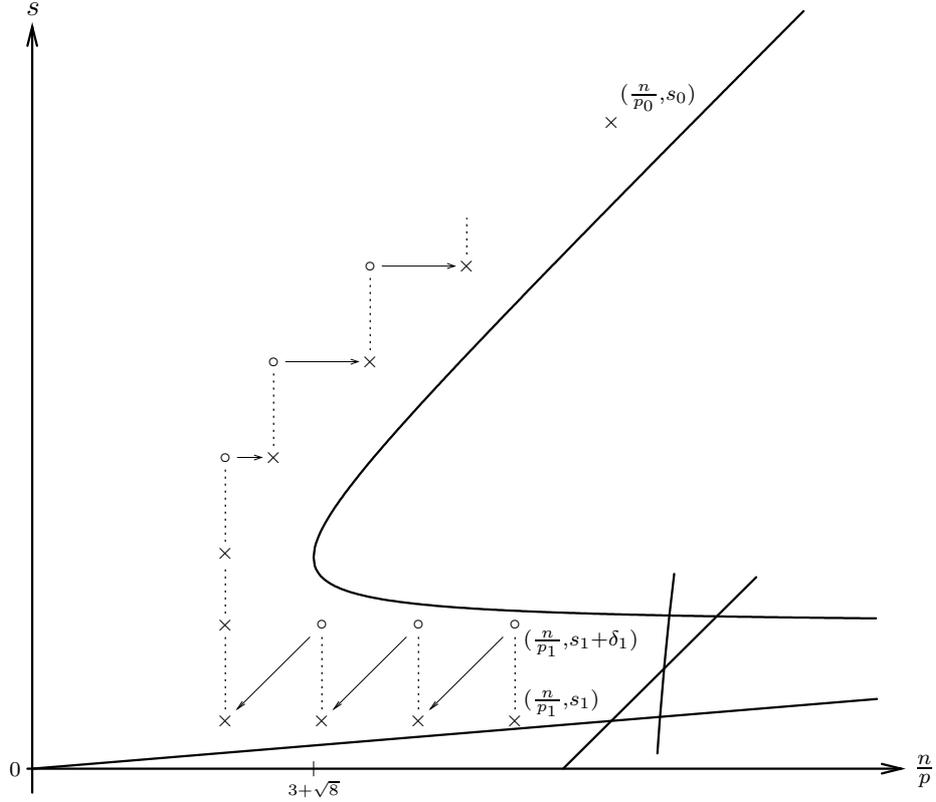
\begin{figure}[htbp]
\hfil
\setlength{\unitlength}{0.0125in}
\begin{picture}(380,330)(0,0)

\path(126.569,7)(126.569,13)                         


\put(210,30){\makebox(0,0)[cc]{$\scriptstyle\times$}}
\put(213,32){\makebox(0,0)[lb]{$\scriptstyle (\fracci n{p_1},s_1)$}}

\put(250,280){\makebox(0,0)[cc]{$\scriptstyle\times$}}
\put(253,285){\makebox(0,0)[lb]{$\scriptstyle (\fracci n{p_0},s_0)$}}

\dottedline{3}(210,35)(210,65)
\put(210,70){\makebox(0,0)[cc]{$\scriptstyle\circ$}}
\put(213,68){\makebox(0,0)[lt]{$\scriptstyle (\fracci n{p_1},s_1+\delta_1)$}}
\path(205,65)(175,35)\path(177,38)(175,35)(178,37)
\put(170,30){\makebox(0,0)[cc]{$\scriptstyle\times$}}
\dottedline{3}(170,35)(170,65)
\put(170,70){\makebox(0,0)[cc]{$\scriptstyle\circ$}}
\path(165,65)(135,35)\path(137,38)(135,35)(138,37)
\put(130,30){\makebox(0,0)[cc]{$\scriptstyle\times$}}
\dottedline{3}(130,35)(130,65)
\put(130,70){\makebox(0,0)[cc]{$\scriptstyle\circ$}}
\path(125,65)(95,35)\path(97,38)(95,35)(98,37)
\put(90,30){\makebox(0,0)[cc]{$\scriptstyle\times$}}
\dottedline{3}(90,35)(90,65)
\put(90,70){\makebox(0,0)[cc]{$\scriptstyle\times$}}
\dottedline{3}(90,75)(90,95)
\put(90,100){\makebox(0,0)[cc]{$\scriptstyle\times$}}
\dottedline{3}(90,105)(90,135)
\put(90,140){\makebox(0,0)[cc]{$\scriptstyle\circ$}}
\path(95,140)(105,140)\path(102,139)(105,140)(102,141)
\put(110,140){\makebox(0,0)[cc]{$\scriptstyle\times$}}
\dottedline{3}(110,145)(110,175)
\put(110,180){\makebox(0,0)[cc]{$\scriptstyle\circ$}}
\path(115,180)(145,180)\path(142,179)(145,180)(142,181)
\put(150,180){\makebox(0,0)[cc]{$\scriptstyle\times$}}
\dottedline{3}(150,185)(150,215)
\put(150,220){\makebox(0,0)[cc]{$\scriptstyle\circ$}}
\path(155,220)(185,220)\path(182,219)(185,220)(182,221)
\put(190,220){\makebox(0,0)[cc]{$\scriptstyle\times$}}
\dottedline{3}(190,225)(190,240)

\thicklines
\path(10,10)(360,39.1667)                            
\path(230,10)(310,90)                                

\path(10,0)(10,320)\path(12,312)(10,320)(8,312)
\path(8,10)(370,10)\path(362,8)(370,10)(362,12)
\put(7,325){\makebox(0,0)[lb]{$s$}}
\put(375,9){\makebox(0,0)[lc]{$\fracnp$}}

\put(0,7){\makebox(0,0)[lb]{$\scriptstyle 0$}}
\put(126.568,5){\makebox(0,0)[ct]{$\scriptscriptstyle 3+\sqrt8$}}


\path(126.569,98.284)(127.069,102.303)(127.569,104.126)
(128.069,105.591)(128.569,106.872)(129.069,108.036)(129.569,109.117)
(130.569,111.107)(131.569,112.936)(132.569,114.652)(133.569,116.284)
(134.569,117.849)(135.569,119.362)(136.569,120.830)(137.569,122.261)
(138.569,123.660)(139.569,125.031)(140.569,126.379)(141.569,127.705)
(142.569,129.012)(143.569,130.302)(144.569,131.577)(145.569,132.837)
(146.569,134.085)(147.569,135.321)(148.569,136.547)(149.569,137.763)
(150.569,138.969)(151.569,140.167)(152.569,141.357)(153.569,142.540)
(154.569,143.716)(155.569,144.886)(156.569,146.049)(157.569,147.207)
(158.569,148.359)(159.569,149.506)(160.569,150.649)(161.569,151.787)
(162.569,152.921)(163.569,154.050)(164.569,155.176)(165.569,156.298)
(166.569,157.417)(167.569,158.532)(168.569,159.644)(169.569,160.753)
(170.569,161.860)(171.569,162.963)(172.569,164.064)(173.569,165.162)
(174.569,166.258)(175.569,167.351)(176.569,168.442)(177.569,169.531)
(178.569,170.618)(179.569,171.702)(180.569,172.785)(181.569,173.866)
(182.569,174.946)(183.569,176.023)(184.569,177.099)(185.569,178.173)
(186.569,179.246)(187.569,180.317)(188.569,181.386)(189.569,182.455)
(190.569,183.521)(191.569,184.587)(192.569,185.651)(193.569,186.714)
(194.569,187.776)(195.569,188.837)(196.569,189.896)(197.569,190.954)
(198.569,192.012)(199.569,193.068)(200.569,194.123)(201.569,195.178)
(202.569,196.231)(203.569,197.283)(204.569,198.335)(205.569,199.385)
(206.569,200.435)(207.569,201.484)(208.569,202.532)(209.569,203.580)
(210.569,204.626)(211.569,205.672)(212.569,206.717)(213.569,207.761)
(214.569,208.805)(215.569,209.848)(216.569,210.890)(217.569,211.932)
(218.569,212.973)(219.569,214.014)(220.569,215.053)(221.569,216.093)
(222.569,217.131)(223.569,218.169)(224.569,219.207)(225.569,220.244)
(226.569,221.280)(227.569,222.316)(228.569,223.352)(229.569,224.387)
(230.569,225.421)(231.569,226.455)(232.569,227.489)(233.569,228.522) 
(234.569,229.555)(235.569,230.587)(236.569,231.619)(237.569,232.650)
(238.569,233.681)(239.569,234.712)(240.569,235.742)(241.569,236.772)
(242.569,237.801)(243.569,238.830)(244.569,239.859)(245.569,240.887)
(246.569,241.915)(247.569,242.943)(248.569,243.970)(249.569,244.997)
(250.569,246.024)(251.569,247.050)(252.569,248.076)(253.569,249.102)
(254.569,250.127)(255.569,251.152)(256.569,252.177)(257.569,253.202)
(258.569,254.226)(259.569,255.250)(260.569,256.274)(261.569,257.297)
(262.569,258.320)(263.569,259.343)(264.569,260.366)(265.569,261.389)
(266.569,262.411)(267.569,263.433)(268.569,264.454)(269.569,265.476)
(270.569,266.497)(271.569,267.518)(272.569,268.539)(273.569,269.560)
(274.569,270.580)(275.569,271.600)(276.569,272.620)(277.569,273.640)
(278.569,274.660)(279.569,275.679)(280.569,276.698)(281.569,277.717)
(282.569,278.736)(283.569,279.755)(284.569,280.773)(285.569,281.791)
(286.569,282.809)(287.569,283.827)(288.569,284.845)(289.569,285.862)
(290.569,286.880)(291.569,287.897)(292.569,288.914)(293.569,289.931)
(294.569,290.948)(295.569,291.964)(296.569,292.981)(297.569,293.997)
(298.569,295.013)(299.569,296.029)(300.569,297.045)(301.569,298.061)
(302.569,299.076)(303.569,300.092)(304.569,301.107)(305.569,302.122)
(306.569,303.137)(307.569,304.152)(308.569,305.167)(309.569,306.181)
(310.569,307.196)(311.569,308.210)(312.569,309.224)(313.569,310.239)
(314.569,311.253)(315.569,312.266)(316.569,313.280)(317.569,314.294)
(318.569,315.307)(319.569,316.321)(320.569,317.334)(321.569,318.347)
(322.569,319.360)(323.569,320.373)(324.569,321.386)(325.569,322.399)
(326.569,323.412)(327.569,324.424)(328.569,325.437)(329.569,326.449)


\path(126.569,98.284)(127.069,94.765)(127.569,93.443)
(128.069,92.478)(128.569,91.697)(129.569,90.451) 
(130.569,89.461)(131.569,88.632)(132.569,87.916)(133.569,87.285)
(134.569,86.719)(135.569,86.207)(136.569,85.739)(137.569,85.308)
(138.569,84.909)(139.569,84.537)(140.569,84.190)(141.569,83.864)
(142.569,83.557)(143.569,83.267)(144.569,82.992)(145.569,82.731)
(146.569,82.483)(147.569,82.247)(148.569,82.022)(149.569,81.806)
(150.569,81.599)(151.569,81.401)(152.569,81.211)(153.569,81.028)
(154.569,80.852)(155.569,80.683)(156.569,80.520)(157.569,80.362)
(158.569,80.209)(159.569,80.062)(160.569,79.920)(161.569,79.782)
(162.569,79.648)(163.569,79.518)(164.569,79.392)(165.569,79.270)
(166.569,79.152)(167.569,79.036)(168.569,78.924)(169.569,78.815)
(170.569,78.709)(171.569,78.606)(172.569,78.505)(173.569,78.407)
(174.569,78.311)(175.569,78.218)(176.569,78.127)(177.569,78.038)
(178.569,77.951)(179.569,77.866)(180.569,77.783)(181.569,77.702)
(182.569,77.623)(183.569,77.546)(184.569,77.470)(185.569,77.396)
(186.569,77.323)(187.569,77.252)(188.569,77.182)(189.569,77.114)
(190.569,77.047)(191.569,76.982)(192.569,76.917)(193.569,76.854)
(194.569,76.793)(195.569,76.732)(196.569,76.672)(197.569,76.614)
(198.569,76.557)(199.569,76.500)(200.569,76.445)(201.569,76.391)
(202.569,76.338)(203.569,76.285)(204.569,76.234)(205.569,76.183)
(206.569,76.133)(207.569,76.084)(208.569,76.036)(209.569,75.989)
(210.569,75.942)(211.569,75.897)(212.569,75.852)(213.569,75.807)
(214.569,75.763)(215.569,75.720)(216.569,75.678)(217.569,75.637)
(218.569,75.595)(219.569,75.555)(220.569,75.515)(221.569,75.476)
(222.569,75.437)(223.569,75.399)(224.569,75.362)(225.569,75.325)
(226.569,75.288)(227.569,75.252)(228.569,75.217)(229.569,75.182)
(230.569,75.147)(231.569,75.113)(232.569,75.080)(233.569,75.047)
(234.569,75.014)(235.569,74.982)(236.569,74.950)(237.569,74.919)
(238.569,74.888)(239.569,74.857)(240.569,74.827)(241.569,74.797)
(242.569,74.768)(243.569,74.738)(244.569,74.710)(245.569,74.681)
(246.569,74.653)(247.569,74.626)(248.569,74.598)(249.569,74.572)
(250.569,74.545)(251.569,74.518)(252.569,74.492)(253.569,74.467)
(254.569,74.441)(255.569,74.416)(256.569,74.391)(257.569,74.367)
(258.569,74.342)(259.569,74.318)(260.569,74.295)(261.569,74.271)
(262.569,74.248)(263.569,74.225)(264.569,74.202)(265.569,74.180)
(266.569,74.158)(267.569,74.136)(268.569,74.114)(269.569,74.093)
(270.569,74.071)(271.569,74.050)(272.569,74.029)(273.569,74.009)
(274.569,73.988)(275.569,73.968)(276.569,73.948)(277.569,73.929)
(278.569,73.909)(279.569,73.890)(280.569,73.870)(281.569,73.851)
(282.569,73.833)(283.569,73.814)(284.569,73.796)(285.569,73.777)
(286.569,73.759)(287.569,73.741)(288.569,73.724)(289.569,73.706)
(290.569,73.689)(291.569,73.671)(292.569,73.654)(293.569,73.638)
(294.569,73.621)(295.569,73.604)(296.569,73.588)(297.569,73.571)
(298.569,73.555)(299.569,73.539)(300.569,73.524)(301.569,73.508)
(302.569,73.492)(303.569,73.477)(304.569,73.462)(305.569,73.446)
(306.569,73.431)(307.569,73.417)(308.569,73.402)(309.569,73.387)
(310.569,73.373)(311.569,73.358)(312.569,73.344)(313.569,73.330)
(314.569,73.316)(315.569,73.302)(316.569,73.288)(317.569,73.275)
(318.569,73.261)(319.569,73.248)(320.569,73.234)(321.569,73.221)
(322.569,73.208)(323.569,73.195)(324.569,73.182)(325.569,73.170)
(326.569,73.157)(327.569,73.144)(328.569,73.132)(329.569,73.120)
(330.569,73.107)(331.569,73.095)(332.569,73.083)(333.569,73.071)
(334.569,73.059)(335.569,73.047)(336.569,73.036)(337.569,73.024)
(338.569,73.013)(339.569,73.001)(340.569,72.990)(341.569,72.979)
(342.569,72.967)(343.569,72.956)(344.569,72.945)(345.569,72.934)
(346.569,72.923)(347.569,72.913)(348.569,72.902)(349.569,72.891)
(350.569,72.881)(351.569,72.870)(352.569,72.860)(353.569,72.850)
(354.569,72.840)(355.569,72.829)(356.569,72.819)(357.569,72.809)
(358.569,72.799)(359.569,72.790)


\path(269,16.368)
(270,30.000)(271,42.429)(272,53.818)(273,64.304)(274,74.000)
(275,83.000)(276,91.385)
\end{picture}
\hfil
\caption{An example of the worst case procedure. 
Spaces containing $u(x)$ and the non-linear term $R_0g(u)$ are
indicated by $\times$ and $\circ$, respectively; arrows stand
for embeddings, while dotted lines indicate new information on
$R_0g(u)$.} \label{wrst-fig} 
\end{figure}

For the case $s_0>s_1+\delta_1$ we explain our procedure in the following;
Figure~\ref{wrst-fig} illustrates the strategy. Observe first
that for $k=1$ the inequality
\begin{equation}
  s_0-\fracc n{p_0}\ge s_k+\delta_k-\fracc n{p_k}
  \label{ir63}
\end{equation}
may be either true or false. If it is false, the point
$(\fracc n{p_k},s_k+\delta_k)$ lies above the line of slope $1$ through
$(\fracc n{p_0},s_0)$, hence these points can be connected by a
straight line; this situation is treated further below in
Subsection~\ref{main-sssect} (and also
illustrated in Figure~\ref{wrst-fig}). We proceed
to show that \eqref{ir63} is false eventually for a certain choice of
the parameters $(s_j,p_j,q_j)$ for $j\ge2$.

Suppose therefore that for some $j\in\N$ we have shown that $u$ is in
a space $E^{s_j}_{p_j,q_j}$ fulfilling the inequality in \eqref{ir63}
and $\delta_j>0$. There
are three possibilities for the definition of
$(s_{j+1},p_{j+1},q_{j+1})$, cf.\ $1^\circ$--$3^\circ$ below that
apply in the given order (possibly $1^\circ$ or even
$1^\circ$ and $2^\circ$ is redundant). 

$1^\circ$ First we consider the case where 
\begin{align}
  \text{(I)} &\quad \fracc n{p_j}-\delta_j\ge0 \\
  \text{(II)}&\quad \fracc n{p_j}>\min(\fracc n{p_0},3+\sqrt8) 
\end{align}
both hold. Then we take a Sobolev embedding 
\begin{equation}
  E^{s_j+\delta_j}_{p_j,q_j}\hookrightarrow E^{s_j}_{p_{j+1},q_j}
  \label{ir64}
\end{equation}
with $\fracc n{p_{j+1}}=\fracc n{p_j}-\delta_j$; this is
possible since the inequalities $\infty\ge p_{j+1}>p_j$ follow from
(I) and $\delta_j>0$. Moreover we let 
\begin{equation}
  (s_{j+1},p_{j+1},q_{j+1})=(s_j,p_{j+1},q_j)
  \label{ir65}
\end{equation}
and it is seen that $s_j=s_1$ and $q_j=q_1$ result from \eqref{ir65}
for all $j$. By the definition of $(s_{j+1},p_{j+1},q_{j+1})$, and
since \eqref{ir63} for $k=j$ and $s_0>s_j$ are assumed to hold, it is
clear that we have 
$E^{s_0}_{p_0,q_0}\hookrightarrow E^{s_{j+1}}_{p_{j+1},q_{j+1}}$, and hence
\begin{equation}
  u= R_0f-R_0g(u)+\cal Ru\in E^{s_{j+1}}_{p_{j+1},q_{j+1}}.
  \label{ir66}    
\end{equation}
For this space containing $u$ we find
\begin{equation}
  s_{j+1}+\delta_{j+1}-\fracc n{p_{j+1}}=s_j+\delta_j-\fracc n{p_j}
   +\delta_{j+1}>s_j+\delta_j-\fracc n{p_j},
  \label{ir67}
\end{equation}
because by Theorem~\ref{dreg-thm} $\delta(s_1,\cdot)$ is a
non-decreasing function of $p$, so that the gain 
$\delta_{j+1}$ in \eqref{ir67} is bounded from below by the amount
$\delta_1>0$; in addition $\delta_j\in [\delta_1,2]$ since
$\sigma(s,p)\le s$. After finitely
many steps either (I) or (II) is false (because $\fracc n{p_j}$ is
decreasing with $j$), in which case we proceed by
$2^\circ$ and $3^\circ$, or \eqref{ir63} itself is false.

$2^\circ$ When (I) is false but (II) is true, 
$\fracc n{p_0}\le 3+\sqrt8$ (otherwise $3+\sqrt8< \fracc n{p_j}$ and
since $\delta_j$ is at most $2$, then (I) would be
true). Now a Sobolev embedding as above is impossible since (I) is
false, but we take a `shorter' one into 
$E^{s_j+\delta_j-\fracci n{p_j}}_{\infty,q_j}$ and let this 
have parameter $(s_{j+1},p_{j+1},q_{j+1})$. That $\fracc n{p_{j+1}}=0$
gives $\delta_{j+1}=2$, so 
\begin{equation}
  s_{j+1}+\delta_{j+1}-\fracc n{p_{j+1}}=s_j+\delta_j-\fracc n{p_j}+2
    \ge s_j+\delta_j-\fracc n{p_j}+\delta_1,
  \label{ir68}
\end{equation}
and the gain is at least $\delta_1$. This construction is at most
used once, for either it makes \eqref{ir63} false or it brings one to
the third case (since $\fracc n{p_{j+1}}=0$).

$3^\circ$ When (II) is false we observe first that $\delta(s,p_j)>0$ for
all $s>0$ if $\fracc n{p_j}<3+\sqrt{8}$.
Indeed, as noted after \eqref{C5}, $\max d(s)=(\sqrt{\fracc n{p_j}}-1)^2$ and 
\begin{equation}
  (\sqrt{\fracc n{p_j}}-1)^2=2\iff
  \sqrt{\fracc n{p_j}}=1+\sqrt{2}\iff
  \fracc n{p_j}=3+\sqrt{8},
\end{equation} 
so if $\fracc n{p_j}<3+\sqrt{8}$ we have
$d(s)<2$ for all $s\in\,]1;\fracc n{p_j}[$, and hence $\delta(s,p_j)\ge
2-\max d((\cdot))=:\alpha>0$ (regardless of whether $1<s<\fracc n{p_j}$ or
not).
 
Now if $\fracc n{p_j}=3+\sqrt{8}$ there is
the freedom to make a single Sobolev embedding of $E^{s_j}_{p_j,q_j}$
(thereby defining $(s_{j+1},p_{j+1},q_{j+1})$ without any gain), so we
can assume that 
\begin{equation}
  \fracc n{p_j}< 3+\sqrt{8},
\end{equation}
whenever (II) is
false. Then $\delta(s,p_j)>0$ for all $s>0$ as noted first.

Now we simply go upwards, that
means we let
\begin{equation}
  (s_{j+1},p_{j+1},q_{j+1})=(s_j+\delta_j,p_j,q_j).
  \label{ir69}
\end{equation}
Because \eqref{ir63} holds for $k=j$, there is an embedding
$E^{s_0}_{p_0,q_0}\hookrightarrow E^{s_j+\delta_j}_{p_j,q_j}$ since
also $p_j\ge p_0$ holds by the negation of (II). Again $u\in
E^{s_{j+1}}_{p_{j+1},q_{j+1}}$, only this time with a gain
$s_{j+1}+\delta_{j+1}-(s_j+\delta_j)=\delta_{j+1}$. Since $\fracc
n{p_k}=\fracc n{p_j}$ for all $k>j$ in this procedure, (II) remains
false; and we have $\delta_k\ge \alpha>0$ for
all $k$, so \eqref{ir63} is violated in a finite number of steps.

\bigskip

Consequently, when the $(s_j,p_j,q_j)$ are defined as above, then for a finite
$k$ the function $u(x)$ belongs to some $E^{s_k}_{p_k,q_k}$ for which
\eqref{ir63} false. 
Moreover, $(s_k,p_k,q_k)\in \Dm(A_T+g(\cdot))$, for it is clear (but
tedious to prove) that
this set is stable under $1^\circ$, $ 2^\circ$ and $3^\circ$ above.

However, this means that the considered case has been reduced to one
of those treated in the next subsection.

\subsubsection{The Main Argument}   \label{main-sssect}
We return to a sketch of the full proof, which eventually would go
through the same
cases as those considered in \cite{JJ95reg}; there a proper exposition for
problems of product-type is given. \cite{JJ95stjm} gives a concise
presentation of the ideas, which originated in \cite{JJ93}.

First of all, if $R_0f+\cal Ru\in E^{s_0}_{p_0,q_0}$ and $R_0g(u)\in
E^{s_1+\delta_1}_{p_1,q_1}$ in \eqref{ir53} with
\begin{equation}
  s_1+\delta_1\ge s_0 \quad\text{and}\quad s_1+\delta_1 -\fracc
   n{p_1}\ge s_0-\fracc n{p_0},
  \label{ir81}
\end{equation}
then there is actually an embedding
$E^{s_1+\delta_1}_{p_1,q_1}\hookrightarrow E^{s_0}_{p_0,q_0}$, so from
\eqref{ir53} it follows that $u\in E^{s_0}_{p_0,q_0}$ (as also used in
the beginning of Subsection~\ref{wrst-sssect}).

Secondly, there is the case with 
\begin{equation}
  s_1+\delta_1< s_0 \quad\text{and}\quad s_1+\delta_1 -\fracc
   n{p_1}\ge s_0-\fracc n{p_0}.
  \label{ir82}
\end{equation}
(This, and \eqref{ir81}, is the one that the worst case was reduced to in
Subsection~\ref{wrst-sssect} above.)  The spaces $E^{s_j}_{p_j,q_j}$
considered for this case in \cite{JJ95reg} all have $(\fracc
n{p_j},s_j)$ lying on or above each of the two lines $s=s_1+\delta_1$
and $s=\fracnp+s_0-\fracc n{p_0}$, so it is geometrically clear that
all these $(s_j,p_j,q_j)$ belong to $\Dm(A_T+g(\cdot))$. See also
Figure~\ref{wrst-fig} after the first horizontal arrow. Hence, by
\cite{JJ95reg}, we obtain $u\in E^{s_0}_{p_0,q_0}$. 

Thirdly, when
\begin{equation}
  s_1+\delta_1\ge s_0 \quad\text{and}\quad s_1+\delta_1 -\fracc
   n{p_1}< s_0-\fracc n{p_0},
  \label{ir83}
\end{equation}
already $(s_2,p_2,q_2)$ defined as in \eqref{ir63} may be
outside of $\Dm(A_T+g(\cdot))$ because the condition
$s_2>r+\frac1{p_2}-1$ may be violated. 

However, it is a main point of \cite{JJ93,JJ95stjm,JJ95reg} that such
problems can be overcome if $\delta(s,p)$ satisfies additional
conditions, and these can be verified in our case. (Phrased briefly,
$R_0g(\cdot)$ should be defined on $E^{s_2}_{p_2,q_2}$: when the problem
occurs for $(s_2,p_2,q_2)$, then $p_2>1$. For $r=1$, $R_0g(\cdot)$ makes
sense on $E^s_{p,q}$ as soon as $s>0$ and $p>1$, for $g(\cdot)$ has order
$0$ on $L_p$, where $R_0$ is defined; and if $r=2$, then $s_2>1$, and
$g(E^{s_2}_{p_2,q_2})\subset H^1_{p_2}$.) Non-convexity problems do not
occur either. 

Finally, when the spaces are such that 
\begin{equation}
  s_1+\delta_1< s_0 \quad\text{and}\quad s_1+\delta_1 -\fracc
   n{p_1}< s_0-\fracc n{p_0}
  \label{ir84}
\end{equation}
the procedure in \cite{JJ95reg} is just to go upwards as in
\eqref{ir69}. Evidently this may be inappropriate here if $\fracc
n{p_1}\ge 3+\sqrt8$, as one will hit the bulge defined by condition
(iii) in Theorem~\ref{dreg-thm}.

However, as described in the worst case analysis in \ref{wrst-sssect},
it is possible first to move left of $\fracnp=3+\sqrt8$ ($1^\circ$),
if necessary make sure that $\fracc n{p_j}<\fracc n{p_0}$ too
($2^\circ$), and then move upwards until a reduction to \eqref{ir81}
or \eqref{ir82}
is achieved ($3^\circ$, with an intermediate step if some $\fracc
n{p_j}$ equals $3+\sqrt8$).

In general the strategy of \cite{JJ95reg} in this case is
to move upwards until \eqref{ir84} is not valid any longer (with $s_j$
and $p_j$ replacing $s_1$ and $p_1$), thus obtaining a reduction to
the cases in \eqref{ir81},\eqref{ir82} and \eqref{ir83}.
The procedure in Subsection~\ref{wrst-sssect} serves the
same purpose, so the argument of \cite{JJ95reg} may be applied the
rest of the way to get $u\in E^{s_0}_{p_0,q_0}$ also in
this situation.

\bigskip

Finally, note that $\Dm(A_T+g(\cdot))$ is an open set defined
by sharp inequalities, so we can weaken the assumption on $u(x)$
slightly to begin with. Thus it is not a restriction to assume
$E^{s_1}_{p_1,q_1;T}=F^{s_1}_{p_1,\infty;T}$.

Since $f\in F^{s_0-2-\varepsilon}_{p_0,\infty}(\overline{\Omega})$ 
and $(s_0-\varepsilon,p_0,\infty)\in \Dm(A_T+g(\cdot))$ for $\varepsilon>0$
small enough, $u\in F^{s_0-\varepsilon}_{p_0,\infty}(\overline{\Omega})$
according to the proof given above. So by
\eqref{ir54} and the fact that $\sigma>s-2$ is possible near
$(s_0,p_0,q_0)$, we get $u\in E^{s_0}_{p_0,q_0;T}$.  

Altogether this completes the proof of Theorem~\ref{ireg-thm}. 

\begin{rem}  \label{parametrix-rem}
Although the basic formula \eqref{ir53} is not surprising, it has to be
derived in the indicated way, for if one rearranges \emph{before} the
application of $R_0$, then $R_0$ may be undefined   
on $E^{s_0-2}_{p_0,q_0}+E^{s_1-2}_{p_1,q_1}$ (that contains
$f-g(u)$). Moreover, in such cases the usual regularity statements for
elliptic problems cannot be used, so then it is necessary
to utilise the parametrix $R_0$.
\end{rem}

\section{The Existence Results} \label{ex-sect}
From the Leray--Schauder theorem we now deduce that
solutions exist as described in Theorem~\ref{solv-thm}.

It suffices to treat the case where the data space has the form 
\begin{equation}
  E^{s_1-2}_{p_1,q_1}\quad\text{for some $s_1<2$ and 
          $p_1$, $q_1\in\,]1,\infty]$}. 
  \label{ex51}
\end{equation}
To see this, we may for the actual data space $E^{s-2}_{p,q}$ use a
Sobolev embedding
\begin{equation}
  E^{s-2}_{p,q}\hookrightarrow E^{s_1-2}_{p_1,q_1},
  \quad\text{for $s-\fracnp=s_1-\fracc n{p_1}$, $q_1=q$}
  \label{ex52}
\end{equation}
when $s-\fracnp<2$ (since $s_1-2-\fracc n{p_1}<0$ in
\eqref{ex51}); for $s-\fracnp\ge2$ one can take
\begin{equation}
  E^{s-2}_{p,q}\hookrightarrow E^{-1/2}_{\infty,\infty}=: E^{s_1-2}_{p_1,q_1}.
  \label{ex53}
\end{equation}
For the corresponding solution spaces the inclusion
$E^s_{p,q;T}\subset L_{t-0}$ for $t^{-1}=(\fracp-\frac sn)_+$ carries
over to $E^{s_1}_{p_1,q_1;T}$ for the same $t$; that is, both (II) and
(III) are invariant under the reduction.

So when \eqref{ex51} is covered, there is to any $f\in E^{s-2}_{p,q}\subset
E^{s_1-2}_{p_1,q_1}$ a solution $u\in E^{s_1}_{p_1,q_1;T}$,
for it is easy to see that $(s_1,p_1,q_1)$ is or may be taken in
$\Dm(A_T+g(\cdot))$ (as for (i), $s_1$ should be taken in the gap between
the lines $s=r+\fracp-1$ and $s=r$ 
(then $p_1>1$ follows since $s-\fracnp>r-n$ by
(i)); (i) implies (ii), and (iii) is redundant for $s<3$). 

But then, from the assumption $(s,p,q)\in \Dm(A_T+g(\cdot))$, we infer from
Theorem~\ref{ireg-thm} that $u$ belongs to $E^{s}_{p,q;T}$.

\bigskip

So consider some $(s,p,q)$ in $\Dm(A_T+g(\cdot))$ with $s<2$ and
$1<p,q\le\infty$. 

When $A_T=A_T^*$ in $L_2$, the space $\ker A_T$
with $Q=P$
may be used as a range complement for $A_T$ for every $(s,p,q)$
according to Proposition~\ref{cmpl-prop}.
Moreover, with $Q^c=I-Q$ it is clear that $Q^c(E^s_{p,q;T})\subset
E^s_{p,q;T}$, and since $A_T$  by restriction is a bijection from
$Q^{c}(E^s_{p,q;T})$ to $Q^{c}(E^{s-2}_{p,q})$, there
is an inverse $B$ of this, that is
\begin{gather}
  B\colon Q^{c}(E^{s-2}_{p,q})\to Q^{c}(E^{s}_{p,q;T}),
  \\
  BA=1 \text{ on $Q^{c}(E^s_{p,q;T})$},\qquad
  AB=1 \text{ on $Q^{c}(E^{s-2}_{p,q})$}.
\end{gather}
These facts apply formally equally well to the case when $A_T$ is invertible.

Obviously $A_Tu+g(u)=f$ is equivalent to the system 
\begin{equation}
  \begin{aligned}
  v&=\lambda BQ^c(f-g(v+w))  \\
  w&=\lambda w+\lambda Q(f-g(v+w)) 
  \end{aligned}
  \label{bif-pb}
\end{equation}
when $\lambda=1$, $v=Q^cu$ and $w=Qu$. Here the transformation 
\begin{equation}
 (v,w)\mapsto
(BQ^c(f-g(v+w)), w+Q(f-g(v+w))) 
  \label{trf-eq}
\end{equation}
is continuous on $Q^c(E^s_{p,q;T})\times \ker A_T$ by Lemma~\ref{cont-lem} and
maps bounded sets to compact ones because 
$g(\cdot)$ does so from $E^s_{p,q}$ to $E^{s-2}_{p,q}$. So by the
Leray--Schauder theorem \eqref{bif-pb} is solvable for $\lambda=1$, if there
exist $c_1$ and $c_2$ in $\,]0,\infty[\,$ such that for 
every $\lambda\in [0,1]$ any solution satisfies
\begin{equation}
  \norm{v}{E^s_{p,q}}< c_1, 
   \qquad
  \norm{w}{E^s_{p,q}}< c_2.  
  \label{ex12}
\end{equation}
Assuming a solution of \eqref{bif-pb} does not exist for $\lambda=1$, then
$L_\infty\hookrightarrow E^{s-2}_{p,q}$ (which holds by \eqref{1.38} since
$s<2$) and \eqref{bif-pb} gives 
\begin{equation}
  \norm{v}{E^s_{p,q}}\le 
  c(\norm{f}{E^{s-2}_{p,q}}+\norm{g}{L_\infty})=: c_1;
  \label{ex13}
\end{equation}
hence $c_2$ does not exist. Thus there is for each $N\in\N$
a solution $(v_N,w_N)$ of \eqref{bif-pb} for some $\lambda_N\in ]0,1[$
such that 
\begin{equation}
  \norm{v_N}{E^s_{p,q}}< c_1 \quad\text{and}\quad
  \norm{w_N}{E^s_{p,q}}\ge N.
  \label{ex14}
\end{equation}
Passing to a subsequence if necessary, a sequence of solutions
$(v_k,t_k w_k)$ to \eqref{bif-pb} is found such that $ \norm{v_k}{E^s_{p,q}}<
c_1$ and 
\begin{equation}
 \norm{w_k}{L_\infty}=1,\qquad t_k\to\infty \quad\text{for }k\to\infty
 \label{ex15}
\end{equation}
Here it is used that all norms on $\ker A_T$ are equivalent. 
Furthermore, we can assume that for some $w_0\in \ker A_T$, 
\begin{equation}
  w_k\to w_0 \quad\text{in } L_\infty(\Omega);
  \label{ex17}
\end{equation}
indeed, by \eqref{ex15} a subsequence converges w$^*$ in $L_\infty$ and,
because $\ker{A_T}$ is finite dimensional, also uniformly
with limit $w_0$ in $\ker A_T$.

By \eqref{ex15}, $A_T$ is not invertible. Moreover, 
$\dual{Qf}{w_k}=\dual{f}{Qw_k}$
because $f$ may be approximated from $C^\infty(\overline{\Omega})$ and
because $Q$ is $L_2$-selfadjoint. With $W_k:=t_kw_k$, then the fact that
$(v_k,W_k)$ is a solution of \eqref{bif-pb} gives
\begin{equation}
  \int_\Omega W_k^2\,dx =\lambda_k\int_\Omega W_k^2\,dx
    -\lambda_k\int_\Omega Q(g(v_k+W_k)-f)W_k\,dx
  \label{ex21}
\end{equation}
or equivalently
\begin{equation}
  \int_\Omega g(v_k+W_k)w_k\,dx -\dual{f}{w_k}=
  \frac{\lambda_k-1}{\lambda_k t_k}\norm{W_k}{L_2}^2
  \label{ex22}
\end{equation}
Because $\lambda_k\in\,]0,1[$, the right hand side is strictly negative, so
since $(v_k)$ is bounded in $L_{t-0}$ and $k$ is arbitrary, 
(II) does not hold. 

Replacing $\lambda Q$ by $-\lambda Q$ in \eqref{bif-pb} yields
\eqref{ex22} with $1-\lambda_k$ instead of $\lambda_k-1$; hence (III)
does not hold either. The proof is complete.

\section{Final Remarks}

\begin{rem} \label{Dahlberg-rem}
  As mentioned in Section~\ref{thms-sect}, the function $\sigma(s,p)$
is conjectured to give the best possible smoothness index of
$E^\sigma_{p,q}$, the codomain of $g(\cdot)$ applied to
$E^{s}_{p,q}$, even for any $p$, $q\in\,]0,\infty]$ and any
$s>\max(0,\fracnp-n)$. 

On the one hand, for $1<s<\fracnp$, this is known to be correct if
e.g.\ $g(t)=\sin t$, for then when
$\varepsilon>0$ there exists $u_\varepsilon\in E^s_{p,q}$ with
$g(u_\varepsilon)\notin E^{\sigma(s,p)+\varepsilon}_{p,q}$. For this
we refer to \cite{Sic89} and the more extensive treatment in
\cite{RuSi96}.

On the other hand $g$ need not be periodic, cf.\ the classes
introduced in \cite{RuSi96}; there isn't complete freedom since
$g(t)=ct$  evidently acts on~$E^s_{p,q}$. 

However, for a subrange of
$1<s<\fracnp$, only this $g(t)$ has that property, as proved by Dahlberg
\cite{Dah79} for the $W^s_p$, and this function moreover falls outside
$\Cb^\infty(\R)$, in which we seek $g(t)$ in the present article.  
Thus it requires further knowledge on $g(t)$ to have another
boundary for the parameter domain $\Dm(A_T+g(\cdot))$ than the hyperbola
in (iii) of Theorem~\ref{dreg-thm}. 
\end{rem}

\begin{rem}[Quasi-Banach spaces]   \label{FLS-rem}
Our existence results are all based on the Leray--Schauder theorem,
although the spaces are merely quasi-Banach when $p<1$ or $ q<1$; but the
theorem was applied for $p$, $q>1$, for in \eqref{ex52} ff. we reduced to
this case by means of the regularity result in Theorem~\ref{ireg-thm}. 
However, the mapping degree has been extended to the
full Besov and Triebel--Lizorkin scales (although this was not used here),
cf.~\cite{FR87}.  

Theorems~\ref{ireg-thm} and \ref{solv-thm} are based on the linear 
elliptic theory in \cite{JJ96ell}, where the Fredholm properties for 
$p$ and $q\in\,]0,1[\,$ are obtained from a
reduction, this time by embeddings, to the Banach cases with $p$, $q>1$; cf.\
\cite[Rem.~5.1]{JJ96ell}. In addition one can extend the Fredholm concept
to quasi-Banach spaces with separating duals as in \cite{FR95}. 
\end{rem}

\begin{rem}[Continuity vs.~boundedness]  \label{cont-rem}
In the definition of $\Dm(A_T+g(\cdot))$ it suffices to require $g(\cdot)$
bounded $E^s_{p,q}\to E^{s-2}_{p,q}$, for this is the only relevant property
for whether $A_T$ or 
$g(\cdot)$ is the dominant operator. Hence continuity of $g(\cdot)$ is
not needed in Theorem~\ref{ireg-thm}, whereas it is for
Theorem~\ref{solv-thm}, in which case it is provided by Lemma~\ref{cont-lem}
at once.   
\end{rem}

\begin{rem}   \label{producttype-rem}
The present pseudo-differential approach to the inverse regularity
properties has predecessors for simpler problems of \emph{product-type},
primarily the stationary Navier--Stokes equations with various boundary
conditions, cf.\ \cite{JJ93,JJ95stjm,JJ95reg}. Comparisons with the
present problem are made in the beginning of Section~\ref{irpf-ssect}
and Subsections~\ref{wrst-sssect} and \ref{main-sssect}. 
\end{rem}

\begin{rem}[Data beyond the borderline]  \label{ireg-rem}
In Theorem~\ref{ireg-thm} the conclusion can be obtained
even for $f(x)$ in some $E^{s_0-2}_{p_0,q_0}$ 
\emph{outside} of $\Dm(A_T+g(\cdot))$, at least when $(s_1,p_1,q_1)\in
\Dm(A_T+g(\cdot))$ with $s_1>1$. More precisely, a range
of $(s_0,p_0,q_0)$ violating (iii) in
Theorem~\ref{dreg-thm} can then be treated. E.g.\ if 
$s_0<\sigma(s_1,p_1)+2$ this is trivial since
$E^{\sigma(s_1,p_1)+2}_{p_1,q_1}\hookrightarrow E^{s_0}_{p_0,q_0}$ in
\eqref{ir54} then.

More generally one could ask for $s_0>\sigma(s_1,p_1)+2$ with
$(s_0,p_0,q_0)$ outside of
$\Dm(A_T+g(\cdot))$. We have an argument based on interpolation and
composition estimates with fixed $s$ and variable $p$ that yields
$u\in E^{s_0}_{p_0,q_0}$ provided $(s_0,p_0,q_0)$ is close to
$\Dm(A_T+g(\cdot))$\,---\,but we omit the details here. 

However, this emphasises that \emph{direct} regularity properties
like those in Theorem~\ref{dreg-thm} and \emph{inverse} regularity
properties, of which there are some in Theorem~\ref{ireg-thm}, should
be analysed separately, since for non-linear problems these notions allow
different sets of parameters $(s,p,q)$ to be considered.
\end{rem}

\section*{Acknowledgements}
This work was done partly during the first author's stay at the
Friedrich--Schiller University of Jena, and J.~Johnsen is grateful for the
warm hospitality he enjoyed at the 
Mathematics Department there. In addition we thank 
W.~Sickel and S.~I.~Poho{\v z}aev for discussions on the subject.


\begin{thebibliography}{AAM78}

\bibitem[AAM78]{AAMa78}
H.~Amann, A.~Ambrosetti, and G.~Mancini, {\em {Elliptic equations with
  noninvertible Fredholm linear part and bounded nonlinearities}}, {Math. Z.}
  {\bf 158} (1978), 179--194.

\bibitem[AM78]{AmMa78}
A.~Ambrosetti and G.~Mancini, {\em {Existence and multiplicity results for
  nonlinear elliptic problems with linear part at resonance}}, {J. Differential
  Equations} {\bf 28} (1978), 220--245.

\bibitem[BdM71]{BM71}
L.~Boutet~de Monvel, {\em {Boundary problems for pseudo-differential
  operators}}, Acta Math. {\bf 126} (1971), 11--51.

\bibitem[BN78]{BrNi78}
H.~Br{\'e}zis and L.~Nirenberg, {\em {Characterizations of the range of some
  nonlinear operators and applications to boundary value problems}}, Ann.
  Scuola Norm. Sup. Pisa {\bf 5} (1978), 225--326.

\bibitem[Dah79]{Dah79}
B.~E.~J. Dahlberg, {\em {A note on Sobolev spaces}}, {Proc. Symp. Pure Math.},
  vol. 35, Part I, {Amer. Math. Soc.}, 1979, pp.~183--185.

\bibitem[FH78]{FuHe78}
S.~Fu{\v{c}}\'{\i}k and P.~Hess, {\em {Nonlinear perturbations of linear
  operators having null-space with strong unique continuation property}},
  Comment. Math. Univ. Carolinae {\bf 19} (1978), 403--407.

\bibitem[FK77]{FuKr77}
S.~Fu{\v{c}}\'{\i}k and M.~Krbec, {\em {Boundary value problems with bounded
  nonlinearity and general null space of the linear part}}, Math. Z. {\bf 155}
  (1977), 129--138.

\bibitem[FR87]{FR87}
J.~Franke and T.~Runst, {\em {On the admissibility of function spaces of type
  $B^s_{p,q}$ and $F^s_{p,q}$, and boundary value problems for non-linear
  partial differential equations}}, Anal. Math. {\bf 13} (1987), 3--27.

\bibitem[FR88]{FrRu88}
J.~Franke and T.~Runst, {\em {Non-linear perturbations of linear non-invertible
  boundary value problems in function spaces of type $B^s_{p,q}$ and
  $F^s_{p,q}$}}, Czechoslovak Math. J. {\bf 38} (1988), 623--641.

\bibitem[FR95]{FR95}
J.~Franke and T.~Runst, {\em {Regular elliptic boundary value problems in
  Besov--Triebel--Lizorkin spaces}}, Math. Nachr. {\bf 174} (1995), 113--149.

\bibitem[Fra86]{F2}
J.~Franke, {\em {Elliptische Randwertprobleme in
  Besov--Triebel--Lizorkin-Ra{\"u}men}}, 1986, {Dissertation,
  Friedrich--Schiller--Universit{\"a}t, Jena}.

\bibitem[GH91]{GH}
G.~Grubb and L.~H{\"o}rmander, {\em {The transmission property}}, Math. Scand.
  {\bf 67} (1991), 273--289.

\bibitem[GK93]{GK}
G.~Grubb and N.~J. Kokholm, {\em {A global calculus of parameter-dependent
  pseudodifferential boundary problems in $L_p$ Sobolev spaces}}, Acta Math.
  {\bf 171} (1993), 165--229.

\bibitem[Gru86]{G1}
G.~Grubb, {\em {Functional calculus of pseudo-differential boundary problems}},
  Pro\-gress in Mathematics, vol.~65, Birkh{\"a}user, Boston, 1986.

\bibitem[Gru90]{G3}
G.~Grubb, {\em {Pseudo-differential boundary problems in $L_p$-spaces}}, Comm.
  Part. Diff. Equations {\bf 15} (1990), 289--340.

\bibitem[Gru91]{G2}
G.~Grubb, {\em {Parabolic pseudo-differential boundary problems and
  applications}}, {Microlocal analysis and applications, Montecatini Terme,
  Italy, July 3--11, 1989} (Berlin) (L.~Cattabriga and L.~Rodino, eds.),
  Lecture Notes in Mathematics, vol. 1495, Springer, 1991.

\bibitem[Hes74]{Hes74}
P.~Hess, {\em {On a theorem by Landesman and Lazer}}, Indiana Univ. Math. {\bf
  23} (1974), 827--829.

\bibitem[Hes77]{Hes77}
P.~Hess, {\em {A remark on a preceding paper of Fu\v{c}\'{\i}k and Krbec}},
  Math. Z. {\bf 155} (1977), 139--141.

\bibitem[H{\"o}r85]{H}
L.~H{\"o}rmander, {\em {The analysis of linear partial differential
  operators}}, Grundlehren der mathematischen Wissenschaften, vol. 256, 257,
  274, 275, Springer Verlag, Berlin, 1983, 1985.

\bibitem[Joh]{JJ95reg}
J.~Johnsen, {\em {Regularity properties of semi-linear boundary problems in
  $L_p$-related spaces}}, (in preparation).

\bibitem[Joh93]{JJ93}
J.~Johnsen, {\em {The stationary Navier--Stokes equations in $L_p$-related
  spaces}}, Ph.D. thesis, University of Copenhagen, Denmark, 1993,
  {Ph.D.-series {\bf 1}}.

\bibitem[Joh95a]{JJ94mlt}
J.~Johnsen, {\em {Pointwise multiplication of Besov and Triebel--Lizorkin
  spaces}}, Math. Nachr. {\bf 175} (1995), 85--133.

\bibitem[Joh95b]{JJ95stjm}
J.~Johnsen, {\em {Regularity properties of semi-linear boundary problems in
  Besov and Triebel--Lizorkin spaces}}, {Journ\'ees \'equations deriv\'ees
  partielles, St.~Jean de~Monts, 1995} (Palaiseau, France), 1995,
  pp.~XIV1--XIV10.

\bibitem[Joh96]{JJ96ell}
J.~Johnsen, {\em {Elliptic boundary problems and the Boutet de Monvel calculus
  in Besov and Triebel--Lizorkin spaces}}, Math. Scand. {\bf 79} (1996),
  25--85.

\bibitem[LL70]{LaLa70}
E.~M. Landesman and A.~C. Lazer, {\em {Non-linear perturbations of linear
  elliptic boundary value problems at resonance}}, J. Math. Mech. {\bf 19}
  (1970), 609--623.

\bibitem[LS34]{LeSc34}
J.~Leray and J.~Schauder, {\em Topologie et equations fonctionelles}, Ann. Sci.
  \'Ecole Norm. Sup. {\bf 51} (1934), 45--78.

\bibitem[Ne{\v c}83]{Nec83}
J.~Ne{\v c}as, {\em {Introduction to the theory of non-linear operators}},
  Teubner--Texte in Mathematik, vol.~52, Teubner Verlag, Leipzig, 1983.

\bibitem[RL95]{RoLa95}
S.~B. Robinson and E.~M. Landesman, {\em A general approach to solvability
  conditions for semilinear elliptic boundary value problems at resonance},
  Diff. Int. Eq. {\bf 8} (1995), no.~6, 1555--1569.

\bibitem[RR96]{RoRu96}
S.~B. Robinson and T.~Runst, {\em Solvability conditions for semilinear
  elliptic boundary value problems at resonance with bounded and unbounded
  nonlinear terms}, 1996, (preprint).

\bibitem[RS96]{RuSi96}
T.~Runst and W.~Sickel, {\em {Sobolev spaces of fractional order, Nemytski\u\i\
  operators and non-linear partial differential equations}}, Nonlinear analysis
  and applications, vol.~3, de Gruyter, Berlin, 1996.

\bibitem[Rud73]{R}
W.~Rudin, {\em {Functional analysis}}, McGraw-Hill, 1973.

\bibitem[Run85]{Run85}
T.~Runst, {\em {Para-differential operators in spaces of Triebel--Lizorkin and
  Besov type}}, Z. Anal. Anwendungen {\bf 4} (1985), 557--573.

\bibitem[Run86]{Run86}
T.~Runst, {\em {Mapping properties of non-linear operators in spaces of
  Triebel--Lizorkin and Besov type.}}, Anal. Math. {\bf 14} (1986), 313--346.

\bibitem[Run90]{Run90}
T.~Runst, {\em {Solvability of semilinear elliptic boundary value problems in
  Besov--Triebel--Lizorkin spaces}}, Surveys on analysis, geometry and
  mathematical physics (Leipzig) ({Schulze, B.--W. and Triebel, H.}, ed.),
  Teubner--Texte zur Mathematik, vol. 117, Teubner Verlagsgesellschaft,
  Leipzig, 1990, pp.~198--291.

\bibitem[Sic89]{Sic89}
W.~Sickel, {\em {On boundedness of superposition operators in spaces of
  Triebel--Lizorkin type}}, Czech. Math. J. {\bf 39(114)} (1989), 323--347.

\bibitem[Sic92]{Sic92}
W.~Sickel, {Superposition of functions in Sobolev spaces of fractional order. A
  survey}, {Banach Center Publications}, vol.~27, {Banach Center Publications},
  {Institute of Mathematics, Polish Academy of Sciences}, Warszawa, 1992.

\bibitem[Tri83]{T2}
H.~Triebel, {\em {Theory of function spaces}}, Monographs in mathematics,
  vol.~78, Birkh{\"a}user Verlag, Basel, 1983.

\bibitem[Tri92]{T3}
H.~Triebel, {\em {Theory of function spaces II}}, Monographs in mathematics,
  vol.~84, Birkh{\"a}user Verlag, Basel, 1992.

\end{thebibliography}

\ifx\undefined\bysame
\newcommand{\bysame}{\leavevmode\hbox to3em{\hrulefill}\,}
\fi

\end{document}